\documentclass[11pt,letterpaper,reqno]{amsart}

\usepackage{amsthm, amssymb, amscd, amsmath}
\usepackage{graphicx, color}


\usepackage{hyperref}
\usepackage{enumerate}

\usepackage[T1]{fontenc}
\usepackage{lmodern}


\usepackage[text={5.4in, 8.0in},centering]{geometry}				
\usepackage{microtype}

\usepackage{parskip}

\numberwithin{equation}{section}


\newtheorem{theorem}{Theorem}[section]  

\newtheorem{proposition}[theorem]{Proposition}

\newtheorem{corollary}[theorem]{Corollary}
\newtheorem{lemma}[theorem]{Lemma}

\newtheorem{assumption}{Assumption}

\newtheorem{thmx}{Theorem}

\theoremstyle{definition}
\newtheorem*{remark}{Remark}

\newcommand{\Z}{\mathbb{Z}}
\newcommand{\N}{\mathbb{N}}
\newcommand{\R}{\mathbb{R}}

\newcommand{\T}{\mathbb{T}}

\newcommand{\cD}{\mathcal{D}}
\newcommand{\cE}{\mathcal{E}}

\newcommand{\cA}{\mathcal{A}}

\newcommand{\cG}{\mathcal{G}}

\newcommand{\cL}{\mathcal{L}}

\newcommand{\cP}{\mathcal{P}}


\newcommand{\bOne}{\mathbf{1}}

\newcommand{\bmat}[1]{\begin{bmatrix}#1\end{bmatrix}}

\DeclareMathOperator*{\argmin}{arg\,min}

\newcommand{\dist}{\mathrm{dist}}
\newcommand{\st}{\, : \,}
\newcommand{\loc}{\mathrm{loc}}
\newcommand{\Cper}{C_{\mathrm{per}}}
\newcommand{\bary}{\bar{y}}
\newcommand{\tlK}{\widetilde{K}}
\newcommand{\tL}{\widetilde{\mathcal{L}}}
\newcommand{\tlh}{\tilde{h}}
\newcommand{\tlX}{\widetilde{X}}

\providecommand{\norm}[1]{\lVert#1 \rVert}
\newcommand{\normbetastar}[1]{\norm{#1}_{\beta V,*}}
\newcommand{\normstar}[1]{\norm{#1}_{*}}

\newcommand{\diag}{\mathrm{diag}\,}


\author{Konstantin Khanin}
\address{University of Toronto, Toronto, Canada}
\email{khanin@math.toronto.edu}

\author{Ke Zhang}
\address{University of Toronto, Toronto, Canada}
\email{kzhang@math.toronto.edu}

\author{Lei Zhang}
\address{Dalian University of Technology, Dalian, China}
\email{lzhang@dlut.edu.cn}

\title{Uniform exponential contraction for viscous Hamilton-Jacobi equations}
\subjclass[2010]{70H20, 37J50, 37D05, 35K05}
\begin{document}
\maketitle

\begin{abstract}
The well known phenomenon of exponential contraction for solutions to the
viscous Hamilton-Jacobi equation in the space-periodic setting is based on
the Markov mechanism. However, the corresponding Lyapunov exponent $\lambda(\nu)$ characterizing the exponential rate of contraction depends on the viscosity $\nu$. The Markov mechanism provides only a lower bound for $\lambda(\nu)$ which vanishes in the limit $\nu \to 0$. At the same time, in the inviscid case $\nu=0$ one also has exponential contraction based on a completely different dynamical mechanism. This mechanism is based on hyperbolicity of action-minimizing orbits for the related Lagrangian variational problem.

In this paper we consider the discrete time case (kicked forcing), and establish a uniform lower bound for $\lambda(\nu)$ which is valid for all $\nu\geq 0$. In the  proof which is based on a nontrivial interplay between the dynamical and Markov mechanisms for exponential contraction
we combine PDE methods with the ideas from the Weak KAM theory.
\end{abstract}

\section{Introduction}

We consider the periodic Hamilton-Jacobi equation with viscosity $\nu >0$:
\begin{equation}
	\label{eq:HJ-vis}
	\varphi_t + \frac12 |\nabla \varphi|^2 = \nu \Delta \varphi + F(x, t), x \in \R^d, t \in \R, 
\end{equation}
where $F(\cdot, t)$ is a $\Z^d$-periodic function, namely $F(x + k, t) = F(x, t)$ for all $k \in \Z^d$. Let $\Cper(\R^d)$ and $\Cper^s(\R^d)$ denote the space
of periodic continuous and $C^s$ functions, respectively. Given an initial condition $\varphi_0 \in \Cper(\R^d)$ and $T_0 < T$, consider the solution $\varphi_{T_0}^\nu(x, t)$ to the initial value problem 
\begin{equation}
	\label{eq:HJ-vis-ivp}
	\begin{aligned}
	& \varphi_t + \frac12 |\nabla \varphi|^2 = \nu \Delta \varphi + F(x, t),  & x \in \R^d, \, t \in [T_0, T], \\
	& \varphi(x, T_0) = \varphi_0(x), & x \in \R^d. 
	\end{aligned}
\end{equation}

Sinai (\cite{Sinai1991}) proved that if $F(x, t) = F(x) B(t)$, where $B(t)$ is either white noise or periodic, there is a unique stationary solution $\psi^\nu \in \Cper(\R^d \times \R)$, such that 
\begin{equation}
	\label{eq:limit-vis}
	\lim_{T_0 \to -\infty} \|\varphi^\nu_{T_0}(x, T) - \psi^\nu(\cdot, T)\|_* \to 0, 
\end{equation}
where $\|f\|_* = \inf_{C \in \R} \sup_x |f(x) + C|$ is the supremum norm modulo an additive constant. Moreover the convergence is exponential, namely, there is $\lambda_\nu >0$ such that 
\begin{equation}
	\label{eq:exp-vis}
	\limsup_{T_0 \to -\infty} \frac{1}{|T_0|} \log \|\varphi^\nu_{T_0}(x, T) - \psi^\nu(\cdot, T)\|_* < - \lambda_\nu. 
\end{equation}
The uniqueness of stationary solution also holds in more general settings (\cite{GIK+2005}).

In this paper, we are interested in the uniform lower bound for the exponent $\lambda_\nu$. This is related to the property of the viscosity limit $\nu \to
0$. As $\nu \to 0$, any limit point of $\varphi^\nu$ in the $\|\cdot\|_*$ norm solves the invicid equation
\begin{equation}
	\label{eq:invicid}
	\begin{aligned}
	& \varphi_t + \frac12 |\nabla \varphi|^2 = F(x, t),  & x \in \R^d, \, t \in (T_0, T), \\
	& \varphi(x, T_0) = \varphi_0(x), & x \in \R^d. 
	\end{aligned}
\end{equation}
Under certain non-degeneracy conditions (\cite{IS2009}, \cite{WKM+2000}, \cite{IK2003}), the solution to the invicid problem $\varphi^0_{T_0}(x, t)$ also admits a unique stationary solution $\psi^0$, in the same sense as before: 
\[
	\lim_{T_0 \to -\infty} \|\varphi^0_{T_0}(x, T) - \psi^0(\cdot, T)\|_* \to 0. 
\]
The exponential convergence of the invicid solution also hold under similar conditions, (see \cite{WKM+2000}, \cite{IKZ2019},) namely, there exists $\lambda > 0$ such that 
\[
	\limsup_{T_0 \to -\infty} \frac{1}{T} \log \|\varphi^0_{T_0}(x, T) - \psi^0(\cdot, T)\|_*  < -\lambda. 
\]

The exponential convergence \eqref{eq:exp-vis} comes from the diffusion term $\nu \Delta \varphi$, and \emph{a priori} we can think that $\lambda_\nu \to 0$ as $\nu \to 0$. However, the exponential convergence in the inviscid case makes it plausible to expect a uniform bound $\lambda_\nu > \bar{\lambda} > 0$ for all $\nu > 0$. The main result of this paper is the proof of this uniform bound. We should point out that the mechanism of the exponential convergence in the inviscid case is purely dynamical. Correspondingly, the main difficulty in proving the uniform exponent is to study interaction between the dynamical and Markovian mechanisms asymptotically as $n \to 0$. 

\subsection*{Statement of the main result}

We are considering the so-called kicked setting in this paper, namely, 
\[
	F(x, t) = \sum_{j \in \Z} F_j(x) \delta(t - j), 
\]
where $F_j \in \Cper^3(\R^d)$. The kicked setting retains much of the feature of the system, but is more convenient to study. We fully expect that the results remain true in the continuous time case. 

To make our arguments more transparent, we consider the simplest case, where all the kicks are the same ($F_j = F$), and $F$ is a generic potential on $\T^d$. Again, we expect the results to hold in the non-stationary case. In particular, in the case when $\{F_j\}$ form an i.i.d. sequence in the space $C^3(\T^d)$. 
\begin{assumption}\label{as:F}
\[
	F(x, t) = \sum_{j \in \Z} F(x) \delta(t - j), \quad  x \in \R^d.
\]
Here $F \in \Cper^3(\R^d)$, $\argmin_{x \in \T^d} F(x) = \{0\}$, $F(0) = 0$, and $D^2F(0)$ is strictly positive definite. 
\end{assumption}

\begin{thmx}\label{thm:unif-exp}
Suppose $F$ satisfies  Assumption~\ref{as:F}, then there exists $\lambda > 0$, $\nu_0 >0$ and $C > 0$ depending only on $F$, such that for all $\nu \in (0, \nu_0)$ and initial conditions $\varphi_0 \in \Cper(\R^d)$, the solution $\varphi^\nu_{-n}(x, t)$ to \eqref{eq:HJ-vis} satisfies
\[
	\|\varphi^\nu_{-n}(\cdot, 0) - \psi^\nu(\cdot)\|_* \le B e^{- \lambda n}, \quad n \ge 0, 
\]
where the constant $B > 0$ depends only on $\|\varphi\|_*$ and $C$. 
\end{thmx}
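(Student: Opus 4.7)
The substitution $u^\nu := \exp(-\varphi^\nu/(2\nu))$ linearizes the viscous Hamilton--Jacobi equation: between kicks, $u^\nu$ obeys the heat equation on $\T^d$; at each kick, $u^\nu \mapsto e^{-F/(2\nu)} u^\nu$. The one--unit--time evolution is therefore a positive integral operator $K_\nu : \Cper(\R^d) \to \Cper(\R^d)$ with strictly positive smooth kernel $K_\nu(x,y) = g_\nu(x-y)\,e^{-F(y)/(2\nu)}$, where $g_\nu$ is the time--$1$ heat kernel on $\T^d$. The stationary solution $\psi^\nu$ corresponds to the Perron eigenfunction $v^\nu > 0$ of $K_\nu$ with eigenvalue $\mu^\nu$. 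Since
\[
\|\varphi^\nu_n - \psi^\nu\|_* = \nu \cdot \mathrm{osc}\bigl(\log(u^\nu_n/v^\nu)\bigr),
\]
the theorem is equivalent to a uniform--in--$\nu$ spectral gap $\mu^\nu_2/\mu^\nu \le e^{-\lambda}$, or equivalently, a uniform contraction of $\mathrm{osc}\circ\log$ under the Doob--normalized Markov kernel $Q_\nu(x,y) := K_\nu(x,y)v^\nu(y)/(\mu^\nu v^\nu(x))$, which fixes the constant $1$.

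\textbf{Feynman--Kac, Laplace, weak KAM.} The iterated kernel admits the Feynman--Kac representation
\[
K_\nu^n(x,y) = \int_{x_0 = y,\, x_n = x} \exp\!\bigl(-S_n(\mathbf{x})/(2\nu)\bigr) \prod_{j=0}^{n-1} g_1(x_{j+1}-x_j)\, \prod_{j=1}^{n-1} dx_j,
\]
with the discrete Lagrangian action $S_n(\mathbf{x}) = \sum_{j=0}^{n-1}\bigl[\tfrac12|x_{j+1}-x_j|^2 + F(x_j)\bigr]$. A Laplace/saddle--point expansion at small $\nu$ yields $K_\nu^n(x,y) = \Pi_n^\nu(x,y)\exp(-S_n^*(x,y)/(2\nu))$ with a uniformly bounded positive Gaussian prefactor $\Pi_n^\nu$ and inviscid minimum action $S_n^*$. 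Under Assumption~\ref{as:F}, the Aubry set of the inviscid kicked dynamics is the fixed point $\{0\}$, hyperbolic as an action minimizer thanks to $D^2F(0) > 0$; weak KAM theory for this minimizer produces the inviscid exponential approach to the weak KAM solution $h = \psi^0$,
\[
S_n^*(x,y) = -nc + h(x) - h(y) + \epsilon_n(x,y), \qquad \|\epsilon_n\|_\infty \le C e^{-\lambda_0 n},
\]
with inviscid rate $\lambda_0 > 0$. The standard four--point cancellation then gives the Birkhoff projective--diameter bound $D(K_\nu^n) \le 2\|\epsilon_n\|_\infty/\nu + O(1)$.

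\textbf{Two--scale interplay and main obstacle.} Choosing $n \asymp \log(1/\nu)$ above makes $D(K_\nu^n)$ uniformly bounded and yields a Birkhoff contraction factor $\theta < 1$ per such block; however, this alone produces only a per--step decay of order $1/\log(1/\nu)$, which is not uniform. The uniform rate is extracted via a two--scale interplay between the dynamical and Markov mechanisms. On the \emph{macroscopic} scale $\|\varphi^\nu_n - \psi^\nu\|_* \gg \nu$, the viscous solution is effectively inviscid and a direct PDE comparison with the inviscid Lax--Oleinik semigroup yields per--step contraction at the inviscid rate $\lambda_0$ (the dynamical mechanism). On the \emph{microscopic} scale $\|\varphi^\nu_n - \psi^\nu\|_* \lesssim \nu$, the ratio $u^\nu_n/v^\nu$ is close to the constant $1$, the Markov operator $Q_\nu$ linearizes around its fixed function, and $Q_\nu$ can be recognized as an $O(\sqrt\nu)$ Gaussian smoothing of the inviscid backward transfer operator associated with the hyperbolic minimizer at $0$; standard perturbation theory then yields a uniform spectral gap $\lambda_M > 0$ (the Markov mechanism). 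Taking $\lambda := \tfrac12 \min(\lambda_0, \lambda_M)$ gives the theorem, with $B$ controlled by $\|\varphi_0\|_*$ together with the Laplace prefactor bounds. The principal obstacle lies in the \emph{transition regime} where the two scales meet: one must produce a quantitative WKB--type match between the Gibbs exponential $\exp(-S_n^*/(2\nu))$ and the Gaussian fluctuation prefactor $\Pi_n^\nu$ accurate enough to upgrade the Birkhoff bound in the macroscopic phase from the naive $1/\log(1/\nu)$ rate to the full inviscid rate $\lambda_0$. This is the ``nontrivial interplay'' announced in the abstract, and it is where PDE tools (maximum principle, parabolic Harnack estimates for $K_\nu$) have to be combined with the weak KAM / Mather--theoretic control of minimizing trajectories near the hyperbolic fixed point.
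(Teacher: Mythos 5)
Your skeleton --- Hopf--Cole, Feynman--Kac over discrete paths, Laplace/saddle--point expansion governed by the discrete action, weak KAM identification of the inviscid minimizer and its hyperbolicity --- matches the paper's strategy, and you correctly identify the essential tension between the dynamical (inviscid) and Markov (viscous) mechanisms. But the proposal has a genuine gap at its center, and you acknowledge it yourself without closing it.

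The claim ``a Laplace/saddle--point expansion at small $\nu$ yields $K_\nu^n(x,y)=\Pi_n^\nu(x,y)\exp(-S_n^*(x,y)/(2\nu))$ with a uniformly bounded positive Gaussian prefactor'' is not available for general $n$, and this is precisely the obstruction the paper is built to overcome. The saddle--point integral sits in $\R^{nd}$, and the error of the Gaussian approximation is controlled only by bounds of the form $O(n\nu^{1/2})$ or worse: the Hessian $\cA_{n,x}^*$ has determinant growing like $\mu^n$, and the cubic Taylor remainder accumulates over $n$ steps, so one must shrink the integration box to radius $r_1(n)\sim 1/n$ and pay for the discarded tail. The paper only proves the prefactor bound (its Proposition~\ref{prop:laplace}) on the window $0\le n\le N_1(\nu)\sim(\nu\log\tfrac1\nu)^{-1/3}$, and the entire architecture of Sections~\ref{sec:hm}--\ref{sec:boostrap} exists to extend this window indefinitely: one converts $\tL_\nu$ into an inhomogeneous Markov chain with kernels $\pi_\nu^{(n)}$ depending on the partition function $\tL_\nu^n\bOne$, applies the Hairer--Mattingly Lyapunov--function criterion with $V=\psi\,\chi_\nu^2$ and norm $\|\cdot\|_{\beta V,*}$ with $\beta\sim\nu^{-1}$ to get uniform contraction {\em on that window}, and then bootstraps the partition--function estimate $C^{-1}\le \tL_\nu^n\bOne/Q_n\le C\chi_\nu$ step by step to all $n$. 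Your macroscopic/microscopic dichotomy, the ``direct PDE comparison with the inviscid Lax--Oleinik semigroup,'' and the ``standard perturbation theory'' around the inviscid transfer operator are all placeholders for arguments that do not obviously exist; the paper replaces them with this concrete Lyapunov--plus--bootstrap mechanism.

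One further strategic point: you conjugate $K_\nu$ by its own Perron eigenfunction $v^\nu$. The paper instead conjugates by $e^{\psi/(2\nu)}$ with $\psi$ the {\em inviscid} weak KAM solution (Step~1 of the outline). This matters: $v^\nu$ is the very object one is trying to estimate, so conjugating by it gives a Markov operator that is tautologically normalized but entirely opaque; conjugating by the inviscid $\psi$ produces a kernel $\tlK_\nu(y,x)=\exp(-\tlh(y,x)/(2\nu))$ whose exponent $\tlh\ge 0$ has a hyperbolic, Laplace--analyzable critical structure supplied by Weak KAM theory (Propositions~\ref{prop:hyp} and~\ref{prop:U-non-deg}). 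Without that choice, the Hessian estimate $\det\cA_{n,x}^*\asymp\mu^n$ and the coercivity bound $\cA_{n,x}^*\ge C^{-1}I$ have no analogue, and the Laplace step cannot even get started.
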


\subsection*{Discussions of the result and the method}
The standard way to study equation \eqref{eq:HJ-vis-ivp} is to apply the Hopf-Cole transformation $u = e^{-\varphi/(2\nu)}$, which transforms our equation to the inhomogeneous heat equation 
\begin{equation*}
	\begin{aligned}
	& u_t  = \nu \Delta u - \frac{1}{2\nu} F(x, t) u, & (x, t) \in \R^d \times (-n, 0)\\
	& u(x, -n) = u_0(x) =  e^{- \frac{1}{2\nu} \varphi_0(x)} & x \in \R^d.  
	\end{aligned}
\end{equation*}
The solution to this equation (under Assumption~\ref{as:F}) is given by $\cL_\nu^n u_0$, where the operator is $\cL_\nu = e^{\nu \Delta} e^{F/(2\nu)}$. In this sense, our result can be understood as proving a uniform spectral gap for the operator $\cL_\nu$. Our proof, however, does not use spectral theory. Instead, we convert the operator $\cL_\nu$ to a sequence of Markov operators, and prove that the inhomogeneous Markov chain converges exponentially with a uniform rate. As such, our method can be adapted to deal with the case that $F_j$'s are different while sharing some uniform properties, where the spectral approach seems to struggle. One of our main motivations is to apply this approach to the random setting of \cite{KZ2017} and \cite{IKZ2019}, which we will address in a separate paper. 

By the Feynman-Kac formula, the solution $\cL_\nu^n u_0$ can be interpreted as the expectation of an integral over a Brownian path (which becomes a random walk in the kicked case). As $\nu \to 0$, the trajectory of the random walk converges in distribution to the minimizing path of the associated Lagrangian. This is where the connection with the invicid equation \eqref{eq:invicid} lies, as the solution of the invicid equation is given precisely by the integral of the Lagrangian over a minimal path, via the Lax-Oleinik variational principle. We use Weak KAM theory to study the properties of the minimal path, which is used to obtain estimates on the operator $\cL^n_\nu$, via the classical Laplace's method. 

There is an important technical hurdle to this plan: the Laplace's method depend strongly on the dimension of the space, while the integral associated to $\cL_\nu^n u_0$ is in $\R^{nd}$. As such, the estimates fail for large $n$. In order to deal with this problem, we devise the following strategy:
\begin{enumerate}
 \item[Step 1.] We first conjugate the operator $\cL_\nu$ to an operator $\tL_\nu = e^{\psi/(2\nu)} \cL_\nu e^{-\psi/(2\nu)}$, where $\psi$ is the stationary solution of the \emph{invicid} equation. 
 \item[Step 2.] We then obtain uniform estimates for the \emph{partition function} $\tL^n \bOne$ ($\bOne$ is the constant function $1$),  for $0 \le n \le N_0(\nu)$, where $N_0(\nu) \sim (\nu \log \frac{1}{\nu})^{-\frac13}$. 
 \item[Step 3.] We show that if the estimates of $\tL^n_\nu \bOne$ in Step 2 holds for $0 \le n \le N(\nu)$, then $\tL^n$ contracts exponentially in the same time interval, using a suitable norm. 

 \item[Step 4.] We then bootstrap our estimates: if $\tL^n$ contracts exponentially, we can apply it to $\tL^N_\nu \bOne$ to get good estimates for $\tL^{n + N} \bOne$, extending item (2) to longer time intervals. 
\end{enumerate}

Heuristically, there are two mechanisms of exponential convergence. The first mechanism is that the invicid problem has a unique global minimizer supported at the minimum of $F$, and all minimizers are attracted to it at an exponential rate. The second comes from the ellipticity of the viscous equation, which provides convergence of the Markov chain, but a priori only at a rate of $O(\nu)$. The first mechanism says all minimizers are uniformly close, which is why we can estimate $\tL_\nu^n \bOne$ for large $n$. This is roughly Step 2. We then apply the second mechanism. Thanks to the good estimates in Step 2, the rate of convergence is uniform in $\nu$. This is roughly Step 3. It turns out that once the second mechanism kicks in, it is self-perpetuating via the bootstrap argument.

\subsection*{Plan of the paper}

The plan of this paper is as follows.
\begin{itemize}
 \item  In Section~\ref{sec:var} and \ref{sec:hyp} we study the invicid equation. Section~\ref{sec:var} recalls basic Weak KAM theory and the underlying Lagrangian dynamics, which (in the kicked case) is given by a twist map. Some standard proofs are provided for the benefits of the reader. 
 \item The Lagrangian system has a unique global minimizer which is a hyperbolic fixed point of the twist map. In Section~\ref{sec:hyp} we combine hyperbolic theory and variational theory to obtain estimates of the Lagrangian action.  
 \item In Section~\ref{sec:hopf-cole}, we introduce the Hopf-Cole transformation and state Theorem \ref{thm:exp-conv-heat}, which is the counter part of our main theorem after the transformation. We also state Proposition~\ref{prop:laplace} that provides the initial estimates for $\tL^n_\nu \bOne$. This is our main technical result, whose proof is postponed to the last two sections. 
 \item In Section~\ref{sec:hm}, we prove Proposition~\ref{prop:pi-hm} and Corollary~\ref{cor:contraction}, implementing Step 3 of our plan. The main tool is the Lyapunov function approach to the convergence of Markov chain (see \cite{HM2011}). 
 \item In Section~\ref{sec:boostrap}, we implement Step 4, namely the bootstrap argument. The main theorem is proved assuming Proposition~\ref{prop:laplace} holds. 
 \item Finally, Proposition~\ref{prop:laplace} is proven in Sections~\ref{sec:hess} and \ref{sec:laplace}. This uses the exponential convergence of the minimizer, and a somewhat delicate application of the Laplace's method. 
\end{itemize}

\section{The variational analysis and the weak KAM solution}
\label{sec:var}

The results of this section hold for a general $C^2$ potential $F \in \Cper(\R^d)$ such that $\min F = F(0) = 0$ and $F(x) > 0$ for all $x \notin \Z^d$. 

Let $\varphi \in \Cper(\R^d)$, define the Lax-Oleinik operator $T: \Cper(\R^d) \to \Cper(\R^d)$ by 
\[
	T(\varphi)(x) = \min_{y \in \R}\{\varphi(y) + h(y, x)\},
\]
where 
\[
	h(y, x) = \frac12 |x - y|^2 + F(y)
\]
is called the \emph{generating function}. The solution to \eqref{eq:invicid} is given by $\varphi(x, n) = T^n(\varphi_0)(x)$ for $n \in \N$. The fixed points of $T$ are the stationary solutions of \eqref{eq:invicid}. 

At this point it is convenient to consider functions and operators defined on $\T^d = \R^d/\Z^d$. Define, for $x, y \in \R^d$, 
\[
	|x|_\T = \min_{k \in \Z^d} |x + k|, \quad A(y, x) = \min_{k \in \Z^d} h(y + k, x). 
\]
Note in particular, as functions on $\R^d \times \R^d$, 
\[
	A(y, x) = h(y, x), \quad \text{ if } |y - x| < \frac12. 
\]
The same functions make sense for  $x, y \in \T^d$, which we denote by the same names. The distance on $\T^d$ is given by $|y - x|_\T$. If $\varphi \in \Cper(\R^d)$, let $\varphi'$ be the associated function on $\T^d$, define 
\[
	T'(\varphi') = \min_{y \in \T^d}\{\varphi'(y) + A(y, x)\}
\]
then $T'(\varphi')$ lifts to $T(\varphi)$ in $\R^d$. From now on we will use the same notation for a function in $\Cper(\R^d)$ and its counter part in $C(\T^d)$, and $T$ for the operators in both spaces.

A function $\varphi: \R^d \to \R$ is called $C$-semi-concave if for each $x \in \R^d$, there exists $l \in \R^d$ such that
\[
	\varphi(y) - \varphi(x) \le l \cdot (y - x) + C |y - x|^2. 
\]
The vector $l$ is called a super-gradient, and we use $\partial \varphi(x)$ to denote the set of all super-gradients. The function $\varphi$ is differentiable at $x$ if and only if $\partial \varphi(x)$ is a singleton. A function $\varphi \in C(\T^d)$ is semi-concave if the associated function on $\Cper(\R^d)$ is semi-concave. 

The following properties of semi-concave function are useful. 
\begin{lemma}[Proposition 4.7.3 of \cite{Fathi2008}]\label{lem:semi-concave-prop}
\begin{enumerate}[(1)]
	\item  If $\varphi \in \Cper(\R^d)$ is $C$-semi-concave, then it is $C'$-Lipschitz with $C'$ depending only on $C$ and the dimension $d$. 
	\item  Suppose $-\varphi_1,  \varphi_2$ are $C$-semi-concave functions on $\T^d$, then for each  $x \in \argmin\{\varphi_2 - \varphi_1\}$,  $d\varphi_1(x)$, $d\varphi_2(x)$ exists and are equal.
	In particular, any periodic semi-concave function $\varphi$ is differentiable on $\argmin  \varphi$ and the gradient of $\varphi$ vanishes there. 
\end{enumerate}
\end{lemma}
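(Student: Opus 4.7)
Both statements are standard consequences of the super-gradient inequality in the definition of semi-concavity, and my plan is to mimic the classical proof (as in Fathi's book), with the one novelty that in Part (1) periodicity is used to extract the $\Z^d$-lattice as a test set rather than, say, an appeal to local boundedness of concave functions.

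\textbf{Part (1): Lipschitz bound via periodicity.} Fix $x \in \R^d$ and pick any super-gradient $l \in \partial \varphi(x)$. The defining inequality
\[
 \varphi(y) - \varphi(x) \le l \cdot (y - x) + C|y - x|^2
\]
applied with $y = x + k$, $k \in \Z^d$, collapses via $\varphi(x+k) = \varphi(x)$ to $0 \le l \cdot k + C|k|^2$. Taking both $\pm k$ gives $|l \cdot k| \le C|k|^2$, and specialising to $k = e_i$ yields $|l_i| \le C$ for each coordinate, so $|l| \le C\sqrt{d}$. Since the super-gradient set is nonempty at every point of a semi-concave function, this uniform bound on $\partial \varphi$ gives Lipschitz continuity with constant $C' = C\sqrt{d}$ by the usual one-line argument (chain any two points by a segment and integrate the super-gradient bound along it, using the standard fact that a semi-concave function is upper-differentiable everywhere).

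\textbf{Part (2): Differentiability at the argmin.} Let $x_0 \in \argmin(\varphi_2 - \varphi_1)$, pick $l_2 \in \partial \varphi_2(x_0)$ (super-gradient, from semi-concavity of $\varphi_2$) and $l_1$ a sub-gradient of $\varphi_1$ at $x_0$ (which exists because $-\varphi_1$ is $C$-semi-concave, so $\varphi_1$ is $C$-semi-convex). The minimality of $x_0$ gives $\varphi_2(y) - \varphi_2(x_0) \ge \varphi_1(y) - \varphi_1(x_0)$ for all $y$, which combined with the two one-sided bounds produces
\[
 l_1 \cdot (y - x_0) - C|y-x_0|^2 \le \varphi_1(y) - \varphi_1(x_0) \le \varphi_2(y) - \varphi_2(x_0) \le l_2 \cdot (y - x_0) + C|y-x_0|^2.
\]
Hence $(l_1 - l_2) \cdot (y - x_0) \le 2C|y-x_0|^2$ for every $y$; setting $y - x_0 = t v$ with $|v| = 1$ and letting $t \to 0^+$ forces $(l_1 - l_2) \cdot v \le 0$ for all unit $v$, so $l_1 = l_2 =: \ell$. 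Substituting this common value back into the sandwich above (and using the same chain to bound $\varphi_j - \varphi_j(x_0)$ on both sides) yields
\[
 \bigl| \varphi_j(y) - \varphi_j(x_0) - \ell \cdot (y - x_0) \bigr| \le C|y - x_0|^2, \qquad j = 1, 2,
\]
which is exactly Fréchet differentiability at $x_0$ with $d\varphi_1(x_0) = d\varphi_2(x_0) = \ell$. For the \emph{In particular} clause, take $\varphi_1 \equiv 0$ (it is $0$-semi-concave and $0$-semi-convex) and $\varphi_2 = \varphi$; then $\argmin(\varphi_2 - \varphi_1) = \argmin \varphi$, and the conclusion forces $d\varphi(x_0) = d\varphi_1(x_0) = 0$ there.

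\textbf{Main obstacle.} There is no real technical hurdle---each step is a one-line convex-analytic manipulation. The only point worth highlighting is that periodicity is indispensable in Part (1): it is what lets us test the super-gradient inequality against the lattice and recover a $C$- and $d$-only bound, rather than a bound involving the oscillation of $\varphi$.
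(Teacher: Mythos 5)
The paper does not prove this lemma itself; it cites Proposition 4.7.3 of Fathi's book. Your self-contained proof is correct, and each step is sound. Part (2) is exactly the standard sandwich argument (squeeze $\varphi_1, \varphi_2$ between a common linear minorant/majorant at the minimum, force the two one-sided gradients to coincide, then read off Fr\'echet differentiability of both), which is what one finds in Fathi. Part (1) uses the $\Z^d$-lattice test to bound all super-gradients by $C\sqrt{d}$, which is a clean way to exploit periodicity directly rather than passing through compactness of the torus as Fathi does for manifolds; both routes give a constant depending only on $C$ and $d$. One small clarification worth adding: after establishing the uniform bound $L=C\sqrt d$ on $\partial\varphi$, the passage to Lipschitz continuity deserves a sentence. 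Either subdivide the segment $[x,y]$ into $n$ pieces, apply the super-gradient inequality at each node to get $\varphi(y)-\varphi(x)\le L|y-x|+C|y-x|^2/n$, and let $n\to\infty$; or note that a semi-concave function is locally Lipschitz (being concave plus a quadratic), hence differentiable a.e.\ by Rademacher, and $|\nabla\varphi|\le L$ a.e.\ then gives the global Lipschitz bound. Your phrase ``integrate the super-gradient bound along it'' gestures at this but leaves the quadratic term unaddressed.
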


Let $\varphi \in \Cper(\R^d)$ be semi-concave, then by Radmacher's theorem, it is differentiable almost everywhere. Denote
\[
	\cD(\varphi) = \{x \in \R^d \st \nabla \varphi(x) \text{ exists}\}. 
\]

The Lax-Oleinik operator regularizes continuous functions in the following sense. 
\begin{lemma}[Proposition 6.2.1 of \cite{Fathi2008}, see also Corollary 3.3 of \cite{KZ2017}]
If $\varphi \in \Cper(\R^d)$, then $T(\varphi)$ is $C$-semi-concave with $C$ depending only on $\|D^2 F\|_{C^0}$. 
\end{lemma}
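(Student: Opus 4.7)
The plan is to show that $T(\varphi)$ inherits semi-concavity directly from the generating function $h(y,x) = \tfrac{1}{2}|x-y|^2 + F(y)$, without requiring any regularity of $\varphi$. The structural point is that for each fixed $y$, the map $x \mapsto h(y,x)$ is pure quadratic in $x$, hence uniformly semi-concave in $x$ with a constant controlled purely by the Hessian of $h$ in $x$ --- which in the kicked setting is simply the identity coming from the kinetic term.

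The first step is to verify this uniform semi-concavity directly. Expanding,
\[
h(y, x') - h(y, x) = (x - y) \cdot (x' - x) + \tfrac{1}{2}|x' - x|^2,
\]
so $h(y, \cdot)$ is $\tfrac{1}{2}$-semi-concave with super-gradient $x - y$, uniformly in $y$. On the torus I would then work with $A(y, x) = \min_{k \in \Z^d} h(y + k, x)$: given $y$ and a base point $x$, picking a minimizing $k_0$ and applying the pointwise bound to the single branch $h(y + k_0, \cdot)$ shows that $A(y, \cdot)$ is also $\tfrac{1}{2}$-semi-concave with super-gradient $x - (y + k_0)$, still uniform in $y$.

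The second step invokes the general fact that a pointwise infimum of functions that are uniformly $C$-semi-concave is itself $C$-semi-concave. Given $x_0 \in \T^d$, by compactness of $\T^d$ and continuity of $\varphi$ and $A$ there is $y_0$ attaining $T(\varphi)(x_0) = \varphi(y_0) + A(y_0, x_0)$. Then for any $x$,
\[
T(\varphi)(x) \;\le\; \varphi(y_0) + A(y_0, x) \;\le\; T(\varphi)(x_0) + l \cdot (x - x_0) + \tfrac{1}{2}|x - x_0|^2,
\]
with $l = x_0 - (y_0 + k_0)$ the super-gradient of $A(y_0, \cdot)$ at $x_0$ from the first step. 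This is exactly the semi-concavity inequality for $T(\varphi)$.

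The only mild obstacle is the existence of the minimizer $y_0$, which follows from compactness; once it is fixed the argument is essentially one line. The scheme is purely structural and applies to any generating function that is uniformly semi-concave in its second variable, and the dependence of $C$ on $\|D^2 F\|_{C^0}$ quoted in the lemma reflects this general Weak KAM principle rather than any genuine contribution of $F$ in the present kicked quadratic setting, where one can in fact take $C = \tfrac{1}{2}$.
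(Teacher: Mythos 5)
Your proof is correct and is the standard argument: $h(y, \cdot)$ is $\tfrac12$-semi-concave uniformly in $y$ because the $x$-dependence of $h$ sits entirely in the quadratic kinetic term, and a pointwise minimum of uniformly $C$-semi-concave functions is $C$-semi-concave once a minimizer is attained. The paper does not supply its own proof of this lemma --- it cites Proposition 6.2.1 of Fathi's notes and Corollary 3.3 of \cite{KZ2017}, both of which treat a general (Tonelli) Lagrangian, where the semi-concavity constant of the Lax--Oleinik operator genuinely depends on the Lagrangian; that is presumably why the constant is phrased as depending on $\|D^2 F\|_{C^0}$. Your sharper conclusion that $C = \tfrac12$ works independently of $F$ is correct in this kicked quadratic case: $F$ enters $h(y,x)$ only through $y$ and hence never contributes to the Hessian in $x$, so the stated dependence on $\|D^2F\|_{C^0}$ is (trivially) satisfied but not actually needed. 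Two cosmetic remarks: (i) the torus detour through $A(y,x)$ and the minimizing integer $k_0$ is not required --- one can minimize directly over $y \in \R^d$, using boundedness of $\varphi$ and $F$ plus quadratic growth of $|x-y|^2$ to force the minimizer into a compact set; (ii) when you fix $k_0$ at the base point $x_0$ and use $A(y_0,x) \le h(y_0+k_0,x)$ with equality at $x_0$, you are implicitly reusing the ``inf of semi-concave is semi-concave'' principle a second time, which is fine but worth noting so the logic reads as a single application of the general fact.
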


The weak KAM theorem (see for example \cite{Fathi2008}) implies there exists a unique $c \in \R$ such that the operator $(T - c)$ admits a fixed point in $C(\T^d)$. Under our assumptions, the value $c = 0$ and the fixed point is unique after normalization. 

\begin{proposition}\label{prop:weak-KAM}
Suppose $\min_{x \in \T^d}F(x) = F(0) = 0$.  Then there is a unique $\psi \in \Cper(\R^d)$ satisfying $\psi(0) = 0$, such that
\begin{equation}
	\label{eq:lo-fixed}
	T(\psi) = \psi, \quad \text{i.e. }
	\psi(x) = \min_{y \in \R^d} \{\psi(y) + h(y, x)\}.  
\end{equation}
$\psi$ is $C-$semi-concave with $C>0$ depending only on $\|D^2 F\|_{C^0}$, and $\psi(x) > 0$ for all $x \ne 0$. 
\end{proposition}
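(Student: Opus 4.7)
The plan is to construct $\psi$ explicitly as the Peierls barrier based at the origin, verify its fixed-point property, regularity, and positivity by a semigroup/regularization argument, and then prove uniqueness by following a backward minimizing orbit of any rival fixed point. Working on $\T^d$ with the periodic generating function $A(y,z) = \tfrac12|y-z|_\T^2 + F(y)$, define $A_n(y,x) = \inf_{y_1,\dots,y_{n-1}}\sum_{i=0}^{n-1}A(y_i,y_{i+1})$ with $y_0=y$ and $y_n=x$, and set
\[
\psi(x) := \inf_{n\geq 1} A_n(0,x).
\]
Since $A\ge 0$ and $A(0,0) = 0$, padding a minimizer with an extra zero-step shows $A_{n+1}(0,x)\le A_n(0,x)$, so the infimum is a decreasing limit with $0\le\psi(x)\le A_1(0,x)\le d/8$ and $\psi(0)=0$. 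The fixed-point equation follows from monotonicity:
\[
T\psi(x) = \min_y\{\psi(y)+A(y,x)\} = \lim_n \min_y\{A_n(0,y)+A(y,x)\} = \lim_n A_{n+1}(0,x) = \psi(x).
\]

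For semi-concavity, put $\psi_n := A_n(0,\cdot)$. Then $\psi_1(x) = \tfrac12|x|_\T^2$ is continuous and periodic, and the identity $\psi_n = T\psi_{n-1}$ holds for $n\geq 2$, so the $T$-regularization lemma stated just before the proposition provides a uniform semi-concavity constant for the sequence $(\psi_n)_{n\geq 2}$ depending only on $\|D^2F\|_{C^0}$. The monotone pointwise convergence $\psi_n\searrow\psi$ combined with the uniform Lipschitz bound from Lemma~\ref{lem:semi-concave-prop}(1) upgrades to uniform convergence via Arzelà--Ascoli, and the semi-concavity constant passes to the limit. For strict positivity, if $\psi(x_0) = 0$ there exist paths $(0 = y_0^{(k)},\ldots,y_{n_k}^{(k)}=x_0)$ with total action tending to zero; the last step $A(y_{n_k-1}^{(k)},x_0) = \tfrac12|y_{n_k-1}^{(k)}-x_0|_\T^2 + F(y_{n_k-1}^{(k)})\to 0$ forces $y_{n_k-1}^{(k)}\to 0$ in $\T^d$ by nondegeneracy of the minimum of $F$, and then $|y_{n_k-1}^{(k)}-x_0|_\T\to 0$ forces $x_0=0$.

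Uniqueness is the most delicate step. Let $\tilde\psi$ be another fixed point with $\tilde\psi(0)=0$; the positivity argument applied to $\tilde\psi$ gives $\tilde\psi\ge 0$. Iterating $\tilde\psi = T^n\tilde\psi$ and taking the initial point to be $0$ yields $\tilde\psi(x) \leq \tilde\psi(0) + A_n(0,x) = A_n(0,x)$ for every $n$, hence $\tilde\psi\leq\psi$. For the reverse inequality, pick a point $x^*$ at which $g := \psi-\tilde\psi$ attains its maximum and construct a backward orbit by setting $x_0 = x^*$ and choosing $x_{n+1}$ to minimize $\tilde\psi(y) + A(y,x_n)$ over $y\in\T^d$, so that $\tilde\psi(x_n) = \tilde\psi(x_{n+1}) + A(x_{n+1},x_n)$. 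Telescoping gives
\[
\tilde\psi(x_0) = \tilde\psi(x_n) + \sum_{i=0}^{n-1} A(x_{i+1},x_i),
\]
and since $\tilde\psi \ge 0$ the series converges, forcing $A(x_{i+1},x_i)\to 0$ and therefore $x_i\to 0$ in $\T^d$. On the other hand, the one-step fixed-point inequality $\psi(x_n)\leq \psi(x_{n+1}) + A(x_{n+1},x_n)$ combined with the equality for $\tilde\psi$ yields $g(x_n)\leq g(x_{n+1})$; maximality of $g$ at $x_0 = x^*$ forces these inequalities to be equalities, so $g(x_n) = \max g$ for every $n$. Passing to the limit and using continuity of $g$ gives $\max g = g(0) = 0$, hence $\psi\leq\tilde\psi$, completing the proof.

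The principal obstacle is the reverse inequality in the uniqueness step, where one must control an infinite backward orbit of a fixed-point minimizer. The plan sidesteps the quantitative hyperbolicity developed in Section~\ref{sec:hyp} by exploiting only the positivity $\tilde\psi\ge 0$ (for summability of the past action) and the nondegeneracy of $D^2F(0)$ (to convert vanishing action into convergence to the unique Mather point $\{0\}$). This is the qualitative shadow of the exponential contraction of minimizers that drives the main result.
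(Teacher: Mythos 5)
Your proof is correct but takes a genuinely different, self-contained route compared to the paper's, which dispatches Proposition~\ref{prop:weak-KAM} in two sentences by citing Fathi's weak KAM theorem for existence and the representation formula of weak KAM solutions (Theorem 8.6.1 of Fathi's book) for uniqueness. You instead build $\psi$ from scratch as the Peierls barrier $\inf_{n\ge 1}A_n(0,\cdot)$, verify the fixed-point identity via monotone convergence of $\psi_n = T^{n-1}\psi_1$, obtain semi-concavity from the regularization lemma and pass it to the limit using the uniform Lipschitz bound, and prove uniqueness by following a calibrated backward orbit of a rival fixed point and using the maximum principle for $g=\psi-\tilde\psi$. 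What this buys you is transparency and self-containment; it also foreshadows the dynamical picture of Section~\ref{sec:hyp}, since the step ``$A(x_{i+1},x_i)\to 0$ forces $x_i\to 0$'' is exactly the qualitative form of the hyperbolic attraction to the Mather point that Proposition~\ref{prop:hyp} quantifies. What the paper's citation buys is brevity, at the cost of requiring the reader to trust an external reference.

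Two small imprecisions worth tightening. First, you assert that ``the positivity argument applied to $\tilde\psi$ gives $\tilde\psi\ge 0$,'' but the argument you gave is tied to the Peierls-barrier representation of $\psi$ and does not literally transfer to a generic fixed point $\tilde\psi$. Either replace it with the direct observation that at a global minimizer $x_*$ of $\tilde\psi$ the fixed-point identity forces $A(y_*,x_*)=0$ for the optimal $y_*$, hence $y_*=x_*=0$ and $\min\tilde\psi=\tilde\psi(0)=0$; or simply note that the telescoping step only needs $\tilde\psi$ bounded below, which is automatic since $\tilde\psi$ is continuous on the compact torus. Second, in the positivity step for $\psi$ you invoke ``nondegeneracy of the minimum of $F$,'' but only uniqueness of the zero set (the standing assumption of Section~\ref{sec:var} that $F>0$ off $\Z^d$) is needed there; the Hessian condition $D^2F(0)>0$ plays no role in Proposition~\ref{prop:weak-KAM} and is only used later in Section~\ref{sec:hyp}.
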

\begin{proof}
The existence follows from the weak KAM theorem, see Theorem 4.1.1 of \cite{Fathi2008}. The uniqueness is due to the representation formula of weak KAM solutions, see for example Theorem 8.6.1 in \cite{Fathi2008}. 
\end{proof}

The Lax-Oleinik operator $T$ is closely related to the twist map $\Phi: \R^d \times \R^d \to \R^d \times \R^d$, defined by 
\begin{equation}
	\label{eq:gen-fun}
	\Phi(x_0,  p_0) = (x_1, p_1) \quad \Leftrightarrow \quad 
	v = - \partial_1 h(x_0, x_1), \quad p_1 = \partial_2 h(x_0, x_1). 
\end{equation}
Explicitly, 
\[
	\Phi(x_0, p_0) = (x_0 + p_0 + \nabla F(x_0) , p_0 + \nabla F(x_0)). 
\]
The map $\Phi$ also projects to a map on $\T^d \times \R^d$, which we denote by $\Phi_\T$.

Let us denote $h_\psi(y, x) = \psi(y) + h(y, x)$ for short. 
\begin{lemma}[See,  for example, Lemma 3.2 of \cite{KZ2017}]\label{lem:T-psi}
For all $x \in \R^d$ and $y \in \argmin h_\psi(\cdot, x)$, then $y \in \cD(\psi)$ and 
\[
\Phi(y, \nabla \psi(y)) = (x, p), 
\]
where $p$ is a super-gradient of $\psi$ at $x$. In particular, if $x \in \cD(\psi)$, then $y$ is the unique element in $\argmin h_\psi(\cdot, x)$ and $\Phi(y, \nabla \psi(y)) = (x, \nabla \psi(x))$. 
\end{lemma}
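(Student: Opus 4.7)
The plan is to reduce the lemma to a single application of the tangency principle for semiconcave functions (Lemma~\ref{lem:semi-concave-prop}(2)), followed by a direct computation with the explicit twist-map formula. First I would observe that since $\psi = T\psi$ and $y \in \argmin h_\psi(\cdot, x)$, the minimum value equals $\psi(x)$, so $y$ is a global minimizer of $g(y') := h(y', x) + \psi(y')$. To invoke the tangency lemma, I would write $g = \varphi_2 - \varphi_1$ with $\varphi_2 = h(\cdot, x)$ (smooth, hence semiconcave) and $\varphi_1 = -\psi$, so that $-\varphi_1 = \psi$ is semiconcave by Proposition~\ref{prop:weak-KAM}. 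The lemma then yields $y \in \cD(\psi)$ together with the matching-gradient identity
\[
-\nabla \psi(y) \;=\; \nabla_1 h(y,x) \;=\; (y-x) + \nabla F(y),
\]
so that $\nabla \psi(y) = (x-y) - \nabla F(y)$.

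Next I would substitute this expression into the explicit formula $\Phi(y, p_0) = (y + p_0 + \nabla F(y),\, p_0 + \nabla F(y))$ from \eqref{eq:gen-fun}; a one-line cancellation produces
\[
\Phi\bigl(y, \nabla \psi(y)\bigr) \;=\; \bigl(x,\; x-y\bigr),
\]
so the image momentum is $p = x - y$. To confirm that $p = x-y$ is a super-gradient of $\psi$ at $x$, I would use the minimum inequality $\psi(x') \le \psi(y) + h(y, x')$ valid for every $x' \in \R^d$, subtract the equality $\psi(x) = \psi(y) + h(y,x)$, and expand $\tfrac12|x'-y|^2 - \tfrac12|x-y|^2 = (x-y)\cdot(x'-x) + \tfrac12|x'-x|^2$. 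This immediately gives $\psi(x') - \psi(x) \le (x-y)\cdot(x'-x) + \tfrac12|x'-x|^2$, i.e.\ $p \in \partial \psi(x)$.

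For the \emph{in particular} clause, if $x \in \cD(\psi)$ then $\partial \psi(x) = \{\nabla \psi(x)\}$, so the super-gradient conclusion forces $x - y = \nabla \psi(x)$ for every minimizer $y$; this pins down $y$ uniquely and upgrades the identity to $\Phi(y, \nabla \psi(y)) = (x, \nabla \psi(x))$.

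I do not anticipate a real obstacle beyond careful sign bookkeeping; the only slightly subtle point is the initial setup of the tangency lemma, where one must choose $\varphi_1 = -\psi$ (and not $\varphi_1 = \psi$) so that both $\varphi_2$ and $-\varphi_1$ are semiconcave. Everything downstream is a one-line substitution into the explicit formula for $\Phi$ and a quadratic expansion.
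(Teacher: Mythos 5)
Your proof is correct, and since the paper cites Lemma~3.2 of \cite{KZ2017} rather than giving its own argument, there is nothing in the text to compare against line by line; what you have written is the standard proof of that cited lemma. Every step checks out: the identification $\psi(x)=h_\psi(y,x)$ from the fixed-point equation $T\psi=\psi$, the invocation of the tangency principle to extract $y\in\cD(\psi)$ together with $-\nabla\psi(y)=\partial_1 h(y,x)$, the one-line substitution into the explicit formula for $\Phi$ yielding $(x,\,x-y)$, and the quadratic expansion showing $x-y\in\partial\psi(x)$ with semi-concavity constant $\tfrac12$.

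One small point worth tightening: Lemma~\ref{lem:semi-concave-prop}(2) as stated in this paper is for functions on $\T^d$, whereas your $\varphi_2=h(\cdot,x)$ is not periodic. Either note that the tangency principle is purely local (the minimizer $y$ exists because $h_\psi(\cdot,x)$ is coercive, and the argument then only compares $\varphi_1$ and $\varphi_2$ in a neighborhood of $y$), or replace $h(\cdot,x)$ by the periodized $A(\cdot,x)=\min_k h(\cdot+k,x)$, which is semi-concave on $\T^d$ and agrees with $h(\cdot+k_0,x)$ near the minimizer. Either repair is one sentence and does not change the substance.
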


Let $\varphi$ be a semi-concave function on $\R^d$. Following \cite{Bernard08}, we define the (overlapping) \emph{pseudograph}
\[
	\cG_\varphi = \{(x, \nabla \varphi(x)) \st x \in \cD(\varphi)\}. 
\]
Lemma~\ref{lem:T-psi} can be rephrased in the pseudograph language as follows. 
\begin{lemma}[Proposition 2.7 of \cite{Bernard08}, see also Lemma 3.2 of \cite{KZ2017} in this setting] \label{lem:pseudograph}
\[
	\Phi^{-1}\left( \overline{\cG_\psi} \right) \subset \cG_\psi. 
\]
In particular, we have 
\[
	\overline{\cG_\psi} = 
	\left\{ (x,p) \st (x, p) = \Phi(y, \nabla \psi(y)) \text{ for some } y \in \argmin h_\psi(\cdot, x) \right\}. 
\]
\end{lemma}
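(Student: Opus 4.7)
Both assertions revolve around Lemma~\ref{lem:T-psi}, which already matches each $y \in \argmin h_\psi(\cdot, x)$ with the twist-map image $\Phi(y, \nabla\psi(y)) = (x, p)$ where $p$ is a super-gradient of $\psi$ at $x$. My plan is to introduce the candidate set
\[
	K := \{(x, p) \st (x, p) = \Phi(y, \nabla\psi(y)) \text{ for some } y \in \argmin h_\psi(\cdot, x)\},
\]
prove the equality $K = \overline{\cG_\psi}$, and then deduce $\Phi^{-1}(\overline{\cG_\psi}) \subset \cG_\psi$ as an immediate corollary.

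First I would check the easy inclusion $\cG_\psi \subset K$: for $x \in \cD(\psi)$, Lemma~\ref{lem:T-psi} produces a unique minimizer $y$ with $\Phi(y, \nabla\psi(y)) = (x, \nabla\psi(x))$, so $(x, \nabla\psi(x)) \in K$. Next I would show that $K$ is closed. Given $(x_n, p_n) \in K$ converging to $(x, p)$, choose associated $y_n \in \argmin h_\psi(\cdot, x_n) \cap \cD(\psi)$. Since $\Phi$ is a smooth diffeomorphism of $\R^d \times \R^d$, we have $(y_n, \nabla\psi(y_n)) = \Phi^{-1}(x_n, p_n) \to (y_0, q_0) := \Phi^{-1}(x, p)$. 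Passing to the limit in $\psi(x_n) = h_\psi(y_n, x_n)$ yields $h_\psi(y_0, x) = \psi(x)$, so $y_0 \in \argmin h_\psi(\cdot, x) \subset \cD(\psi)$ by Lemma~\ref{lem:T-psi}, and semi-concavity of $\psi$ forces $q_0 = \nabla\psi(y_0)$, since the only super-gradient at a point of differentiability is the gradient itself. Hence $(x, p) = \Phi(y_0, \nabla\psi(y_0)) \in K$. Combining the two moves gives $\overline{\cG_\psi} \subset K$.

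The main obstacle is the reverse inclusion $K \subset \overline{\cG_\psi}$: showing that every super-gradient $p = x - y$ produced from an $\argmin$ minimizer is actually a \emph{reachable} super-gradient, i.e., is realized as $\lim_n \nabla\psi(x_n)$ for some $x_n \in \cD(\psi)$ converging to $x$. This is a classical fact for Lax--Oleinik solutions of Hamilton--Jacobi equations: the set of reachable super-gradients at $x$ is precisely $\{x - y : y \in \argmin h_\psi(\cdot, x)\}$. I would establish it by a branch-selection argument. Given $y \in \argmin h_\psi(\cdot, x)$, strong convexity $D_{11}^2 h(\cdot, x) = \Id$ together with the semi-concavity of $\psi$ makes $y$ a locally isolated minimizer, and the implicit function theorem applied to the critical-point equation $\nabla\psi(\tilde y) + \nabla_1 h(\tilde y, \tilde x) = 0$ on any open set where $\psi$ is $C^1$ produces a smooth branch of minimizers $\tilde y(\tilde x)$ emanating from $(y, x)$. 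By Rademacher combined with Fubini one may choose a sequence $x_n \to x$ along which $\psi$ is differentiable and the minimizers $y_n$ of $h_\psi(\cdot, x_n)$ (unique at such $x_n$ by Lemma~\ref{lem:T-psi}) lie on this branch, so $y_n \to y$ and $\nabla\psi(y_n) \to \nabla\psi(y)$. Then $(x_n, \nabla\psi(x_n)) = \Phi(y_n, \nabla\psi(y_n)) \to \Phi(y, \nabla\psi(y)) = (x, p)$, placing $(x, p)$ in $\overline{\cG_\psi}$. This branch-selection step, which is the content of Proposition~2.7 of \cite{Bernard08} and Lemma~3.2 of \cite{KZ2017}, is the technical crux.

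Once $K = \overline{\cG_\psi}$ is established, the inclusion $\Phi^{-1}(\overline{\cG_\psi}) \subset \cG_\psi$ is immediate: any $(x, p) \in K$ equals $\Phi(y, \nabla\psi(y))$ with $y \in \argmin h_\psi(\cdot, x) \subset \cD(\psi)$, so $\Phi^{-1}(x, p) = (y, \nabla\psi(y)) \in \cG_\psi$.
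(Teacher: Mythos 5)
Your outline correctly establishes the easy inclusion $\cG_\psi \subset K$, and your closedness argument for $K$ is sound: the use of continuity of $\Phi^{-1}$, upper semicontinuity of the $\argmin$ set, Lemma~\ref{lem:T-psi} to get $y_0\in\cD(\psi)$, and stability of super-gradients to force $q_0 = \nabla\psi(y_0)$ is exactly the right chain of reasoning. Combined, these give $\overline{\cG_\psi} \subset K$, which is already sufficient to deduce the first displayed claim $\Phi^{-1}(\overline{\cG_\psi}) \subset \cG_\psi$; note you do not actually need the reverse inclusion for this part.

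The gap is in the direction $K \subset \overline{\cG_\psi}$. Your branch-selection argument appeals to the implicit function theorem on ``any open set where $\psi$ is $C^1$,'' but a semi-concave function need not be $C^1$ on any open set near $y$: the singular set of a semi-concave function can be dense, and the only regularity known at this stage of the paper is differentiability a.e. plus the two-sided quadratic bound of Lemma~\ref{lem:D-2nd} along $\cD^-(\psi)$. (The statement that $\psi$ is actually $C^3$ on an open neighborhood of $\cD^-(\psi)$ is Proposition~\ref{prop:U-non-deg}, which is proved \emph{after} and \emph{using} this lemma, so invoking it here would be circular.) As written, your IFT step does not apply, and the ``Rademacher plus Fubini'' selection of a sequence $x_n$ lying on the claimed branch is not justified. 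You do flag this as the technical crux and defer to the cited sources (Proposition~2.7 of \cite{Bernard08}, Lemma~3.2 of \cite{KZ2017}) --- which is indeed where the genuine argument lives, since this lemma is quoted rather than proved in the present paper --- but the reasoning you supply in its place would not go through. If you want a self-contained account, the standard route is to prove directly that every super-gradient of a value function $\psi = T\psi$ arising from a minimizer $y\in\argmin h_\psi(\cdot,x)$ is a reachable gradient, which is a statement about Hopf--Lax-type formulas and does not reduce to smoothness of $\psi$; the IFT-on-an-open-set shortcut should be replaced by that argument.
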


If $x \in \cD(\psi)$, we denote by $\bary(x)$ the unique element of $\argmin h_\psi(\cdot, x)$. Then 
\[
\bary(x) = \pi_1 \Phi^{-1}(x, \nabla \psi(x)), 
\]
where $\pi_1: \R^d \times \R^d \to \R^d$ is the projection to the first component. Define 
\begin{equation}
	\label{eq:D-minus}
	\cD^-(\psi) = \pi_1  \Phi^{-1}\left( \overline{\cG_\psi} \right), 
\end{equation}
it follows from Lemma~\ref{lem:pseudograph} that 
\[
	\cD^-(\psi) = \bigcup_{x \in \R^d} \argmin h_\psi(\cdot, x) = \overline{\bary\left( \cD(\psi) \right)} \subset \cD(\psi). 
\]
Both $\cD(\psi)$ and $\cD^-(\psi)$ are periodic sets, and $\bary(x + l) = \bary(x) + l$ for any $l \in \Z^d$. We denote the projections of $\cD(\psi)$ and $\cD^-(\psi)$ by $\cD_\T(\psi)$ and $\cD_\T^-(\psi)$, while keeping the name $\bary$ unchanged. In the torus setting, 
\[
\{\bary(x)\} = \argmin_{y \in \T^d} \{ \psi(y) + A(y, x)\}, \quad x \in \cD_\T(\psi).
\]

While the function $\nabla \psi$ is only defined at almost every point, it is more regular on the set $\cD^-(\psi)$. This is described in Lemma~\ref{lem:D-2nd} and Corollary~\ref{cor:strong-lip}. 

\begin{lemma}\label{lem:D-2nd}
There exists $C > 0$ such that for every $y \in \cD^-(\psi)$ and $z \in \R^d$, 
\[
	|\psi(z) - \psi(y) - \nabla \psi(y) \cdot (z - y)| \le C |z - y|^2. 
\]
\end{lemma}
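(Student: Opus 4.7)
The bound splits naturally into a one-sided upper estimate and a one-sided lower estimate, which together give the claimed two-sided quadratic control.

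The upper bound is a free gift from semi-concavity. By Proposition~\ref{prop:weak-KAM}, $\psi$ is $C$-semi-concave with $C$ depending only on $\|D^2F\|_{C^0}$, and since $\cD^-(\psi) \subset \cD(\psi)$, the function $\psi$ is differentiable at every $y \in \cD^-(\psi)$, so the unique super-gradient is $\nabla \psi(y)$. Hence
\[
  \psi(z) - \psi(y) - \nabla \psi(y)\cdot (z-y) \le C|z-y|^2
\]
for all $z\in \R^d$ directly from the definition of semi-concavity.

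For the lower bound I would use the variational characterization of points in $\cD^-(\psi)$. Fix $y \in \cD^-(\psi)$; by \eqref{eq:D-minus} and Lemma~\ref{lem:pseudograph} there exists $x \in \R^d$ with $y \in \argmin_{y'} h_\psi(y',x)$. Combining with the fixed point equation $T(\psi)=\psi$ from Proposition~\ref{prop:weak-KAM}, this means $\psi(y) + h(y,x) = \psi(x)$, while for an arbitrary $z \in \R^d$,
\[
  \psi(z) + h(z,x) \ge \psi(x) = \psi(y) + h(y,x).
\]
Rearranging,
\[
  \psi(z) - \psi(y) \ge h(y,x) - h(z,x).
\]
Now $h(\cdot,x) = \tfrac12|x-\cdot|^2 + F(\cdot)$ is $C^2$ with Hessian $I + D^2 F$, uniformly bounded on $\R^d$ by periodicity. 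A second-order Taylor expansion of $h(\cdot,x)$ at $y$ therefore gives
\[
  h(z,x) - h(y,x) = \partial_1 h(y,x)\cdot (z-y) + R, \qquad |R|\le C|z-y|^2,
\]
with $C$ independent of $x$ and $y$. Finally, by Lemma~\ref{lem:T-psi} the fact that $y \in \argmin h_\psi(\cdot,x)$ implies $\Phi(y,\nabla\psi(y)) = (x,\nabla\psi(x))$ (or a super-gradient at $x$), which by the definition \eqref{eq:gen-fun} of $\Phi$ translates into $\partial_1 h(y,x) = -\nabla \psi(y)$. Substituting gives
\[
  \psi(z) - \psi(y) \ge \nabla \psi(y)\cdot (z-y) - C|z-y|^2,
\]
which is the desired lower bound.

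There is no real obstacle here; the only point requiring care is that $\nabla\psi(y)$ is a well-defined gradient at every $y \in \cD^-(\psi)$ and that it agrees with $-\partial_1 h(y,x)$ for the associated $x$. Both facts are supplied by Proposition~\ref{prop:weak-KAM} and Lemma~\ref{lem:T-psi}. The constant $C$ is uniform because the Hessian of $h(\cdot,x)$ is bounded by $1+\|D^2F\|_{C^0}$.
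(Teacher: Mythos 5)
Your proof is correct and essentially the same as the paper's: both obtain the upper bound for free from semi-concavity of $\psi$, and both obtain the lower bound from the minimality $h_\psi(z,x) \ge h_\psi(y,x)$, the identity $\nabla\psi(y) = -\partial_1 h(y,x)$, and a uniform quadratic bound on $h(\cdot,x)$ near $y$. The only cosmetic difference is that you Taylor-expand $h(\cdot,x)$ using its bounded Hessian (and invoke $T(\psi)=\psi$ to name the common value $\psi(x)$, which is not strictly needed), whereas the paper simply uses semi-concavity of $h(\cdot,x)$ in a single chain of inequalities.
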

\begin{proof}
Since $\psi$ is $C$-semi-concave, we only need to prove the lower bound. 

If $y \in \cD^-(\psi)$, then there exists $x \in \R^d$, such that $y \in \argmin h_\psi(\cdot, x)$. By Lemma~\ref{lem:pseudograph} and \eqref{eq:gen-fun}, $\nabla \psi(y) = - \partial_1 h(y, x)$. Since $h(\cdot, x)$ is $C$-semi-concave, 
\[
	\begin{aligned}
	0 & \le h_\psi(z, x) - h_\psi(y, x) = \psi(z) - \psi(y) + h(z, x) - h(y, x) \\
	& \le \psi(z) - \psi(y) + \partial_1 h(y, x)  \cdot (z - y) + C |z - y|^2 \\
	& = \psi(z) - \psi(y) - \nabla \psi(y) \cdot (z - y) + C |z - y|^2, 
	\end{aligned}
\]
implying 
\[
	\psi(z) - \psi(y) - \nabla \psi(y) \cdot (z - y) \ge - C |z - y|^2. \qedhere
\]
\end{proof}

The following holds for general semi-concave functions.
\begin{lemma}\label{lem:strong-con}
Suppose $f: \R^d \to \R$ is $C$-semi-concave, and suppose for given $x \in \cD(f)$, there exists $C' > 0$ such that 
\[
	f(y) - f(x) - \nabla f(x) \cdot (y - x) \ge -  C' |y - x|^2, \quad \forall y \in \R^d. 
\]
Then for any $l_y \in \partial f(y)$, we have 
\[
	|l_y - \nabla f(x)| \le (4C' + 2C) |y - x|. 
\]
\end{lemma}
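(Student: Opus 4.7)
My plan is to derive the bound by combining three inequalities: the semi-concavity at $y$ (with super-gradient $l_y$), the semi-concavity at $x$ (with gradient $\nabla f(x)$, since $x \in \cD(f)$), and the hypothesized quadratic lower bound at $x$. The idea is to evaluate all three at an auxiliary point $z$, eliminate $f$ from the system, and then choose $z$ optimally to isolate $l_y - \nabla f(x)$.

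Concretely, for any $z \in \R^d$, the semi-concavity inequalities give
\[
f(z) \le f(y) + l_y \cdot (z-y) + C|z-y|^2, \qquad f(y) \le f(x) + \nabla f(x)\cdot(y-x) + C|y-x|^2,
\]
while the lower-bound hypothesis gives $f(z) \ge f(x) + \nabla f(x) \cdot (z-x) - C'|z-x|^2$. Chaining the first with the lower bound and then substituting the second eliminates all occurrences of $f$, leaving the inequality
\[
(\nabla f(x) - l_y) \cdot (z-y) \le C'|z-x|^2 + C|y-x|^2 + C|z-y|^2,
\]
valid for every $z \in \R^d$.

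Now I would set $w := \nabla f(x) - l_y$ and take $z = y + sw$ with $s > 0$ to be chosen. The left-hand side becomes $s|w|^2$, and using $|y-x+sw|^2 \le 2|y-x|^2 + 2s^2|w|^2$ the right-hand side is bounded by $(2C'+C)|y-x|^2 + (2C'+C)s^2|w|^2$. Dividing through by $s$ and then optimizing (the natural choice is $s = |y-x|/|w|$, which balances the two terms) yields $|w|^2 \le 2(2C'+C)|y-x|\,|w|$, hence $|w| \le (4C'+2C)|y-x|$ as required. (If $w=0$ the claim is trivial, so the division is justified.)

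The proof is short and I do not anticipate a real obstacle; the only subtlety is the choice of $z$ along the direction of $\nabla f(x)-l_y$, which is the standard device for converting a scalar quadratic inequality for all $z$ into a linear bound on the defect of super-gradients. All constants, in particular the factor $4C'+2C$ on the right, fall out directly from the AM--GM balancing of the two quadratic terms in $s$.
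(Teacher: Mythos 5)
Your proof is correct and is essentially the same argument as the paper's. You probe along the direction $\nabla f(x) - l_y$, use semi-concavity at $y$, the quadratic lower bound at $x$ applied to the probe point, and semi-concavity at $x$ applied to $y$; your optimal step $s=|y-x|/|w|$ recovers the paper's explicit choice $\lambda=|y-x|$ with unit direction vector, and the constant $4C'+2C$ falls out identically.
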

\begin{proof}
Consider any $y \in \R^d$ and $l_y \in \partial f(y)$. Set $w = (\nabla f(x) - l_y)/|\nabla f(x) - l_y|$, $\lambda = |y - x|$, we have 
\[
	f(y + \lambda w) - f(y) - l_y \cdot \lambda w  \le  C \lambda^2, 
\]
\[
	f(y + \lambda w) - f(x) - \nabla f(x) \cdot (y + \lambda w - x) \ge - C|y + \lambda w - x|^2 \ge - 4C' \lambda^2,
\]
the last inequality is due to $|y - x| = |\lambda w| = \lambda$. Subtract the two inequalities, we get 
\[
	\begin{aligned}
	- (4C' + C) \lambda^2 & \ge f(x) - f(y) + \nabla f(x) \cdot (y - x) + (\nabla f(x) - l_y) \cdot \lambda w \\
	& \ge - C\lambda^2 + \lambda |\nabla f(x) - l_y|, 
	\end{aligned}
\]
or $|\nabla f(x) - l_y| \le (4C' + 2C) |y - x|$. 
\end{proof}

It is known that $\nabla \psi$ is Lipschitz on $\cD^-(\psi)$ (see \cite{Fathi2008}), which is related to Mather's graph theorem. Our next statement is stronger, and follows directly from Lemma~\ref{lem:D-2nd} and \ref{lem:strong-con}. 
\begin{corollary}\label{cor:strong-lip}
For all $y \in \cD(\psi)$ and $x \in \cD^-(\psi)$, we have
\[
	|\nabla \psi(y) - \nabla \psi(x)| \le 6C |y - x|. 
\]
\end{corollary}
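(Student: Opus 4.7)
The plan is to recognize that Corollary~\ref{cor:strong-lip} follows by a direct chaining of Lemma~\ref{lem:D-2nd} and Lemma~\ref{lem:strong-con} applied to $f = \psi$, with the roles played as follows: the base point in Lemma~\ref{lem:strong-con} will be the point $x \in \cD^-(\psi)$ (at which we have access to a quadratic lower bound on $\psi$), and the perturbation point will be $y \in \cD(\psi)$ (at which a super-gradient of $\psi$ exists and coincides with $\nabla\psi(y)$).

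First I would note that $\cD^-(\psi) \subset \cD(\psi)$, as already recorded after \eqref{eq:D-minus}. In particular, $\nabla \psi(x)$ is well-defined for $x \in \cD^-(\psi)$, so the hypotheses of Lemma~\ref{lem:strong-con} (which require the base point to lie in the differentiability set) are satisfied. Then, by Lemma~\ref{lem:D-2nd} applied at $x$, we have the one-sided quadratic bound
\[
\psi(z) - \psi(x) - \nabla\psi(x)\cdot(z-x) \ge -C\,|z-x|^2, \quad \forall z \in \R^d,
\]
which is exactly the quantitative lower-bound hypothesis of Lemma~\ref{lem:strong-con} with constant $C'=C$. Combined with the fact that $\psi$ is $C$-semi-concave (Proposition~\ref{prop:weak-KAM}), both structural assumptions of Lemma~\ref{lem:strong-con} are in place.

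Next I would invoke Lemma~\ref{lem:strong-con} with this $f=\psi$ and this base point $x$. For the perturbation point $y \in \cD(\psi)$, the super-differential $\partial \psi(y)$ is the singleton $\{\nabla \psi(y)\}$ (by the characterization of differentiability recorded just after the definition of $\partial \psi$). Taking $l_y = \nabla\psi(y)$ in the conclusion of the lemma yields immediately
\[
|\nabla\psi(y) - \nabla\psi(x)| \le (4C' + 2C)\,|y-x| = 6C\,|y-x|,
\]
which is the claimed estimate. There is essentially no obstacle here, since the two auxiliary lemmas have been arranged precisely to combine in this way; the only thing to be careful about is the asymmetry of the statement, namely that the quadratic lower bound is available at $x \in \cD^-(\psi)$ (and not necessarily at arbitrary $y \in \cD(\psi)$), which is why the base point in the application of Lemma~\ref{lem:strong-con} must be $x$, not $y$.
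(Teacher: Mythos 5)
Your proposal is correct and matches the paper's own (one-line) proof exactly: apply Lemma~\ref{lem:D-2nd} at $x \in \cD^-(\psi)$ to obtain the quadratic lower bound with $C' = C$, then invoke Lemma~\ref{lem:strong-con} with $f = \psi$ and base point $x$, taking $l_y = \nabla\psi(y)$ since $y \in \cD(\psi)$. The constant $4C' + 2C = 6C$ follows immediately.
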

\begin{proof}
It follows from Lemma~\ref{lem:D-2nd} that Lemma~\ref{lem:strong-con} applies to $f = \psi$ and $C' = C$. 
\end{proof}

\section{Hyperbolicity and weak KAM solution}
\label{sec:hyp}

The map $\Phi_\T$ admits $(0,0)$ as a fixed point. We will show that it is hyperbolic and  $\cG_\psi$ locally coincides with the unstable manifold. Moreover, we show that $\psi$ is a Lyapunov function which is strictly contracted by the mapping $\bary$. 

\begin{proposition}
\label{prop:hyp}
There exists $C > 1$ depending only on $F$ such that the following hold. 
\begin{enumerate}[(1)]
	\item The fixed point $(0, 0)$ of $\Phi_\T$ is hyperbolic. Its local unstable manifold $W^u_\loc$ is $C^2$ smooth with $C^2$ norm bounded by $C$. The tangent plane to $W^u_\loc$ at $(0, 0)$ is given by the graph $\{(h, S^+ h) \st h \in \R^d\}$, where $S^+$ is a positive definite symmetric matrix. 
	\item There exists $r > 0$ such that for each $x \in B_r$, we have 
	\[
		\{(x, \nabla \psi(x)) \st x \in B_r \cap \cD_\T(\psi)\} = W^u_\loc \cap \{(x, v) \st x \in B_r\}. 
	\]
	We have $B_r \subset \cD^-(\psi) \subset \cD(\psi)$, and on $B_r$, $\psi$ is $C^3$ with uniformly bounded second derivatives. Moreover, $\psi(x) \ge C^{-1} |x|_\T^2$ and $\sqrt{\psi}$ is a $C$-Lipschitz function on $\T^d$. 
	\item We have 
	\[
		\{(x, \nabla\psi(x)) \st x \in \cD_\T(\psi)\} \subset W^u := \bigcup_{0 \le k < C} \Phi_\T^k W^u_\loc. 
	\]
	\item There exists $\kappa \in (0, 1)$ such that for each $x \in \cD(\psi)$,  $\psi(\bary(x)) \le \kappa^2 \psi(x)$. 
\end{enumerate}
\end{proposition}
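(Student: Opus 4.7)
My plan is to prove the four items in the order (1), (2), (4), (3), exploiting the linear hyperbolic picture at $(0,0)$, the symplectic/Lagrangian structure of $W^u_\loc$, and a compactness argument that upgrades a near-$0$ Lyapunov bound to a global one.

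For (1), linearizing yields
\[
D\Phi_\T(0,0) = \bmat{I+A & I \\ A & I}, \qquad A := D^2 F(0) > 0.
\]
A symmetric matrix $S$ whose graph $p = Sx$ is $D\Phi$-invariant must satisfy the Riccati equation $S^2 + SA = A$; because such $S$ is forced to commute with $A$, simultaneous diagonalization yields a unique positive definite $S^+$ and a unique negative definite $S^-$, with corresponding eigenvalues of $D\Phi$ off the unit circle. The $C^3$ Hadamard-Perron theorem produces $W^u_\loc$ as a $C^2$ Lagrangian graph whose $C^2$-norm is controlled by $F$, with tangent $\{(h, S^+ h)\}$ at the origin.

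For (2), since $W^u_\loc$ is an exact Lagrangian graph $p = g(x)$ in some $B_{r_0}$, write $g = \nabla u$ with $u \in C^3(B_{r_0})$, $u(0) = 0$. The exact symplectic identity $p_1\,dx_1 - p_0\,dx_0 = dh(x_0,x_1)$, applied to pairs $(y,\nabla u(y))$ and $\Phi(y,\nabla u(y)) = (x,\nabla u(x))$ on $W^u_\loc$, gives $u(x) - u(y) = h(y,x)$; strict convexity of $h(\cdot, x)$ then makes $y$ the unique local minimizer of $h_u(\cdot,x)$, so $u$ satisfies a local Lax-Oleinik fixed-point equation on a smaller ball $B_r$. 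To identify $u$ with $\psi$ on $B_r$, I use backward invariance of $\cG_\psi$ (Lemma~\ref{lem:pseudograph}) together with Corollary~\ref{cor:strong-lip}: any $(x,\nabla \psi(x))$ with $x \in B_r$ has a backward $\Phi$-orbit trapped in a small neighborhood of $(0,0)$, so by the Hadamard-Perron characterization of local unstable manifolds it lies on $W^u_\loc$. Hence $\nabla\psi = g$ on $B_r \cap \cD^-(\psi)$; density (obtained by iterating $\bary$ and using smoothness of $g$) propagates the equality to all of $B_r$, giving $\psi \in C^3(B_r)$ with $\nabla^2 \psi(0) = S^+$. The quadratic lower bound $\psi(x) \ge C^{-1}|x|_\T^2$ and the Lipschitz property of $\sqrt\psi$ then follow from a Taylor expansion near $0$ together with a compactness argument outside a neighborhood of $0$, using $\psi > 0$ off $\{0\}$ from Proposition~\ref{prop:weak-KAM}.

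For (4), near $0$ the map $\bary = \Phi^{-1}|_{W^u_\loc}$ is smooth and hyperbolically contracts to $0$, and $\psi(x) = \tfrac12 x^\top S^+ x + O(|x|^3)$ gives $\psi(\bary(x))/\psi(x) \le \kappa_0^2 < 1$ on $B_{r/2}$. Away from $0$, I use $\psi(\bary(x)) = \psi(x) - h(\bary(x),x)$: if the ratio $h(\bary(x_n), x_n)/\psi(x_n)$ had no positive lower bound on $\{|x|_\T \ge r/2\}$, a sequence with $|\bary(x_n) - x_n| \to 0$ and $F(\bary(x_n)) \to 0$ would, by $\argmin F = \{0\}$ and upper semicontinuity of $\bary$, force $x_n \to 0$, a contradiction. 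Taking the worse of the two rates yields a uniform $\kappa < 1$. For (3), iterate (4): $\psi(\bary^k(x)) \le \kappa^{2k}\|\psi\|_\infty$, so the bound $\psi(x) \ge C^{-1}|x|_\T^2$ from (2) places $\bary^k(x) \in B_r$ for some $k$ depending only on $F$, whence $(x,\nabla\psi(x)) = \Phi_\T^k(\bary^k(x), \nabla\psi(\bary^k(x))) \in \Phi_\T^k W^u_\loc$.

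The principal obstacle is step (2), the local identification of $\cG_\psi$ with $W^u_\loc$. One must reconcile the Hadamard-Perron description of $W^u_\loc$ (points with bounded backward orbits) with the backward invariance of $\cG_\psi$, and then promote the resulting equality from the potentially irregular set $\cD^-(\psi)$ to all of $B_r$ via the smooth parametrization of $W^u_\loc$. Every quantitative estimate appearing in (2)-(4) piggy-backs on this local correspondence between the variational and hyperbolic pictures.
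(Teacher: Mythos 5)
Your proposal follows the paper closely for item~(1), and your routing of (3) through (4) is a clean alternative to the paper's argument, but the crucial step in (2) is asserted rather than established, and the route you take to the rest of (2) imports more machinery than the paper needs.

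On (2): you claim that for $x\in B_r$ the backward $\Phi$-orbit of $(x,\nabla\psi(x))$ is ``trapped in a small neighborhood of $(0,0)$'' and attribute this to Lemma~\ref{lem:pseudograph} and Corollary~\ref{cor:strong-lip}. Neither of these gives trapping. Backward invariance places the orbit inside $\overline{\cG_\psi}$, which is a compact set, and the Lipschitz bound of Corollary~\ref{cor:strong-lip} controls $\nabla\psi$ in terms of $x$; together they say nothing about the $x$-components staying near $0$. The argument you are missing is a Lyapunov one for the $x$-components. The paper's version is the $m(n,\delta)$ counting argument: the telescoping Lax--Oleinik identity $\psi(x_0)=\psi(x_{-n})+\sum_{k=-n}^{-1}A(x_k,x_{k+1})$ combined with the uniform lower bound $A(y,x)\ge C^{-1}(|y|_\T^2+|x|_\T^2)$ bounds the number of excursions of the backward orbit outside any $B_r$, \emph{uniformly} over starting points, and this uniformity is then reused for (3). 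A cheaper local version that fits your ordering is available--namely $\psi(\bary(x))\le\psi(x)$ (from the Lax--Oleinik identity together with $h\ge 0$), plus the fact that $\psi$ is continuous with unique zero at $0$ (Proposition~\ref{prop:weak-KAM}), plus Corollary~\ref{cor:strong-lip} for the $p$-component--but you must state the monotonicity; the two results you cite do not supply it.

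Two smaller comments. The Lagrangian-graph/generating-function detour (writing $W^u_\loc$ as $p=\nabla u$ and deriving a local Lax--Oleinik fixed-point relation for $u$) is elegant but plays no role in your actual identification step, which is really carried by the (missing) trapped-orbit argument plus the density of $\cD(\psi)$ in $B_r$; the paper does not need this detour. In (4), the away-from-$0$ contradiction argument is fine in spirit but ``upper semicontinuity of $\bary$'' is unnecessary: $h(\bary(x_n),x_n)\to 0$ forces $F(\bary(x_n))\to 0$ and $|x_n-\bary(x_n)|\to 0$ separately, so $\bary(x_n)\to 0$ and then $x_n\to 0$, contradicting $|x_n|_\T\ge r/2$; the paper instead uses directly that $A(\bary,x)\ge C^{-1}|x|_\T^2\ge\delta$ on $\{|x|_\T\ge r\}$, which avoids the limiting argument and the distinction between $h$ and $A$ that your formula $\psi(\bary(x))=\psi(x)-h(\bary(x),x)$ glosses over when the minimizer lies in a nonzero lattice translate.
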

\begin{proof}
(1):
The linearization of $\Phi_\T$ at $(0, 0)$ is given in block form by 
\[
	D\Phi(0, 0) = 
	\bmat{
	I_d + D^2 F(0) & I_d \\
	D^2 F(0) & I_d
	}. 
\]
To see this matrix is hyperbolic, denote $M = D^2 F(0)$, and suppose $S$ is a symmetric matrix such that the hyperplane $\{(h, Sh) \st h \in \R^d\}$ is invariant under $D\Phi(0, 0)$. Since
\[
	\bmat{I_d + M & I_d \\ M & I_d} \bmat{I_d \\ S} = \bmat{I_d + M + S \\ M + S}
\]
we get $M + S = S(I_d + M + S)$ or $S^2 + SM - M = 0$. The solutions are given by the quadratic formula $S^\pm = \frac12(- M \pm \sqrt{M^2 + 4M})$. The mapping $D\Phi(0, 0)$ takes any vector $(h, S^\pm h)$ to $(h_1, S^\pm h_1)$, where $h_1 = (I_d + M + S^\pm) h = (I_d + \frac12(M \pm \sqrt{M^2 + 4M}))h$. Since $M + \sqrt{M^2 + 4M}$ is positive definite, there exists $\kappa_0 \in (0, 1)$ such that 
\begin{equation}
	\label{eq:hor-expansion}
	|(I_d + M + S^+) h| > \kappa_0^{-1} |h|. 
\end{equation}
Moreover $(I_d + M + S^+)(I_d + M + S^-) = I_d$, so $|(I_d + M + S^-)h| < \kappa_0 |h|$. We have proven $(0, 0)$ is hyperbolic with an unstable bundle given by $(h, S^+h)$, where $S^+$ is positive definite. Also note for future use that $S^+$ commutes with $M$.  The rest of (1) are conclusions of standard hyperbolic theory. 

(2) and (3): (2) is proven in \cite{IS2009} in the flow setting and in \cite{KZ2017} for maps under much more general assumptions. We give a proof here for the sake of completeness. Suppose $x_0 \in \cD_\T(\psi) \subset \T^n$ and denote $v_0 = \nabla\psi(x_0)$, let $(x_{-n}, v_{-n}) = \Phi_\T^{-n}(x_0, v_0)$, we note that $\bar{y}(x_{-n}) = x_{-n-1}$ for all $n \ge 0$. 

Note that there is $C > 1$ such that
\[
	A(y, x) \ge C^{-1}(|y|^2_\T + |x|^2_\T), \quad \forall x, y \in \T^d. 
\]
Then for each $r > 0$, there exists $\delta > 0$ such that $A(y, x) > \delta$ unless $x, y \in B_r$. Given $n \in \N$, let $m(n, \delta) = \#\{k \in [-n, -1] \st |x_k|_\T > r\}$. Then 
\[
	\psi(x_0) = \psi(x_{-n}) +  \sum_{k = - n}^{-1} A(x_k, x_{k+1}) > \psi(x_{-n}) + m(n, \delta) \delta, 
\]
since $\psi$ is uniformly Lipschitz, we have $m(n, \delta) < \|\psi\|_{Lip}/\delta$. This argument shows that each backward orbit $\{x_{-n}\}_{n \in \N}$ can have at most finitely many points outside of $B_r$, i.e. $x_{-n} \to 0$ in $\T^d$. This convergence is uniform over all $x_0 \in \cD_\T(\psi)$ since the bound for $m(n, \delta)$ is independent of $x_0$. Moreover $v_{-n} = \nabla \psi(x_{-n}) \to 0$ since $\nabla \psi$ is Lipschitz over the set $\cD_\T^-(\psi)$. Hyperbolic theory implies any orbit backward asymptotic to an hyperbolic fixed point is contained in its (global) unstable manifold. Moreover, there exists $r_0 > 0$ such that if the entire orbit $(x_{-n}, v_{-n})_{n \in \N} \subset B_{r_0}((0, 0))$, then $(x_0, v_0) \in W^u_\loc \cap B_{r_0}(0, 0)$. This is the case if $x_0$ is sufficiently close to $0$. This proves 
\[
	\{(x, \psi(x)) \st x \in B_r \cap \cD_\T(\psi)\} = W^u_\loc \cap \{(x, v) \st x \in B_r\}
\]
and 
\[
	\{(x, \nabla\psi(x)) \st x \in \cD_\T(\psi)\} \subset \bigcup_{0 \le k < \infty} \Phi^k W^u_\loc. 
\]
The fact that the infinite union can be replaced with a finite one is again due to the uniform convergence of $(x_{-n}, v_{-n})$ to $(0, 0)$. 

Finally, $D^2 \psi(0) = S^+$ is positive definite. Since $0$ is the only global minimum of $\psi$, this implies that there exists $C > 0$ such that $\psi(x) \ge C^{-1} |x|_\T^2$. This also implies $\sqrt{\psi}$ is uniformly Lipschitz with Lipschitz constant depending only on $C$. 

(4): First we assume $x \in B_r(0)$. 
Since the graph of $\nabla \psi$ coincide with the smooth graph $W^u_\loc$ on $B_r$, the mapping $\bary$ is smooth on $B_r$, and $D \bary(0) = (I_d + M + S^+)^{-1}$ as we computed earlier. Since $\bary(x) = (I_d + M + S^+)^{-1} x + O(x^2)$, 
\[
	\psi(\bary)  = \langle  D^2 \psi(0) \bary, \bary \rangle + O(\bary^3) 
	= x^T (I_d + M + S^+)^{-T} S^+ (I_d + M + S^+)^{-1} x + O(x^3). 
\]
Since $(I_d + M + S^+)^{-1}$ is strictly contracting and commutes with $S^+$, by diagonalizing the matrices, there exists $\kappa_0 \in (0, 1)$ such that 
\[
	x^T (I_d + M + S^+)^{-T} S^+ (I_d + M + S^+)^{-1} x  < \kappa_0^2  x^T S^+ x^T, \quad 
	|(I_d + M + S^+)^{-1} x| < \kappa_0 |x|. 
\]
For $\kappa_1 \in (\kappa_0, 1)$, we can choose $r$ sufficiently small such that 
\[
	\psi(\bary(x))| < \kappa^2 \psi(x), \quad
	|\bary(x)| < \kappa |x|, 
	\quad x \in B_r. 
\]

If $x \notin B_r(0)$, then there exists $\delta > 0$ such that $A(y, x) \ge \delta$. Denote $\bary = \bary(x)$, we have 
\[
	\frac{\psi(\bary)}{\psi(x)} = \frac{\psi(\bary)}{\psi(\bary) + A(\bary, x)} 
	\le \frac{1}{1 + \delta/\|\psi\|_{C^0}} < 1. 
\]
It suffices to set $\kappa = \min \{\kappa_1, \sqrt{1/(1 + \delta/\|\psi\|_{C^0})}\}$. 
\end{proof}

Suppose $x \in \cD(\psi)$, then $\bary(x)$ reaches the unique minimum of $h_\psi(\cdot, x)$. We are interested in the non-degeneracy of this minimum, which turns out to be related to whether $\psi(x)$ has bounded second derivatives. Lemma~\ref{lem:D-2nd} ensures that this holds for every $x \in \cD^-(\psi)$. Using the fact that $\cG_\psi$ is contained in a smooth sub-manifold, we can also extend this estimate to a neighborhood. 

\begin{proposition}\label{prop:U-non-deg}
There exists an open set $U \supset \cD^-(\psi)$ and $\delta > 0$ depending only on $F$, such that the following holds. 
\begin{enumerate}
	\item $\psi$  is $C^3$ on $U$ with uniformly bounded second derivatives. 
	\item We have
	\[
		h_\psi(y, x) - h_\psi(\bary(x), x) \ge \delta |y - \bary(x)|^2, \quad
		\text{ for all } x \in U, \, y \in \R^d,
	\]
	and 
	\[
		h_\psi(y, x) - \min h_\psi(\cdot, x) \ge \delta, \quad 
		\text{ for all } y \notin U. 
	\]
\end{enumerate}
\end{proposition}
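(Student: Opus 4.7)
The plan is to transport the smoothness of $\psi$ on $B_r$ outward along the unstable manifold by pulling back through the hyperbolic dynamics of Proposition~\ref{prop:hyp}, and then to combine positive semi-definiteness of the Hessian at a minimum with an invertibility argument coming from the hyperbolic expansion.

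First I would establish a uniform backward-reach bound. By Proposition~\ref{prop:hyp}(4) and the quadratic lower bound $\psi(y) \ge C^{-1}|y|_\T^2$ from Proposition~\ref{prop:hyp}(2), one has
\[
|\bary^k(x_0)|_\T^2 \le C\,\psi(\bary^k(x_0)) \le C\,\kappa^{2k}\,\|\psi\|_{C^0}
\]
for every $x_0 \in \cD^-(\psi)$ and $k \ge 0$, so there exists $N = N(F)$ with $\bary^N(x_0) \in B_{r/2}$ for every such $x_0$. The fixed-point identity $T^N\psi = \psi$ then gives, for $x$ in a small neighborhood of $x_0$,
\[
\psi(x) = \min_{z \in \bar B_r(0)}\{\psi(z) + h_N(z,x)\},
\]
with minimum attained in the interior at $z = \bary^N(x_0)$; here $h_N$ denotes the $N$-step generating function. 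The first-order condition $\nabla\psi(z) = -\partial_1 h_N(z,x)$ is equivalent to $x = \pi_1 \Phi^N(z,\nabla\psi(z))$. On $B_r$ the graph of $\nabla\psi$ coincides with $W^u_\loc$ by Proposition~\ref{prop:hyp}(2), so the map $\Psi_N(z) := \pi_1\Phi^N(z,\nabla\psi(z))$ is exactly the $N$-fold forward iteration of $\Phi$ restricted to $W^u$. Proposition~\ref{prop:hyp}(1),(3) then implies $\Psi_N$ is a $C^2$ diffeomorphism from a neighborhood of $\bary^N(x_0)$ onto a neighborhood $U_{x_0}$ of $x_0$, with derivatives controlled uniformly in $F$. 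Substituting $z = \Psi_N^{-1}(x)$ gives a $C^3$ formula for $\psi$ on $U_{x_0}$ with uniform $C^2$ bounds, and taking $U := \bigcup_{x_0 \in \cD^-(\psi)} U_{x_0}$ establishes part~(1).

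For part~(2), the Hessian of $h_\psi(\cdot, x)$ at $y = \bary(x)$ equals $I + D^2 F(y) + D^2\psi(y)$, which is precisely the Jacobian of the one-step forward map $\Psi_1(y) = y + \nabla F(y) + \nabla\psi(y)$. Because $\Psi_N = \Psi_1 \circ \cdots \circ \Psi_1$ is a diffeomorphism at $\bary^N(x_0)$, the chain rule forces each $D\Psi_1$ along the orbit -- in particular $D\Psi_1(\bary(x_0))$ -- to be invertible. Since $\bary(x_0)$ is a minimum of the $C^2$ function $h_\psi(\cdot, x_0)$ (using part~(1)), the same Hessian is positive semi-definite; invertibility promotes it to strictly positive definite. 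By continuity and compactness of $\bary(\cD_\T^-(\psi)) \subset \cD_\T^-(\psi)$, after possibly shrinking $U$ one obtains a uniform bound $I + D^2F + D^2\psi \ge 2\delta_0 I$ on $\bary(U)$, and Taylor's theorem yields $h_\psi(y,x) - h_\psi(\bary(x),x) \ge \delta_0|y - \bary(x)|^2$ on a ball of fixed Euclidean radius $\rho$ around $\bary(x)$. On the compact annulus $\rho \le |y - \bary(x)| \le R$ with $x \in \bar U$, uniqueness of the minimizer for $x \in \cD^-(\psi)$ combined with lower semi-continuity provides a uniform positive gap; for $y$ at large Euclidean distance the elementary estimate $h_\psi(y,x) \ge \tfrac12|y-x|^2 - \|\psi\|_{C^0} - \|F\|_{C^0}$ absorbs $\delta|y - \bary(x)|^2$ for small enough $\delta$. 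The second inequality in (2) follows from the same compactness argument, using that $\cD_\T^-(\psi)$ and $\T^d \setminus U$ are disjoint compact subsets of $\T^d$.

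The main obstacle is the upgrade from positive semi-definiteness to strict positive definiteness of the one-step Hessian at $\bary(x_0)$. A direct expansion estimate for $D\Psi_1$ at a single point of $W^u$ is not available -- expansion is only guaranteed in iterated form -- so the invertibility needed here has to be pulled back from the $N$-step hyperbolic diffeomorphism $\Psi_N$ via the chain rule. This is why the backward-reach bound and the iterated representation in terms of $h_N$ must be put in place before the one-step non-degeneracy can be extracted.
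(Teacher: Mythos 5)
There is a genuine gap at the step where you assert that $\Psi_N(z) := \pi_1\Phi^N(z,\nabla\psi(z))$ is a $C^2$ diffeomorphism from a neighborhood of $\bary^N(x_0)$ onto a neighborhood of $x_0$. Proposition~\ref{prop:hyp}(1),(3) give that $W^u_\loc$ is a $C^2$ graph near the fixed point, that $\Phi$ is a diffeomorphism, and that the pseudograph of $\psi$ is contained in a finite union $\bigcup_{0\le k<C}\Phi^k W^u_\loc$. What they do \emph{not} give is that the base-projection $\pi_1$ restricted to $W^u$ is regular at $(x_0,\nabla\psi(x_0))$, which is exactly what the invertibility of $D\Psi_N(\bary^N(x_0))$ amounts to. Hyperbolic expansion is a statement about $D\Phi^N$ on $T W^u\subset\R^{2d}$; after projecting to the base, $\Phi^k W^u_\loc$ is a Lagrangian manifold that can develop folds (critical points of $\pi_1$), and nothing in the hyperbolicity package forbids such a fold from sitting over a point of $\cD^-(\psi)$. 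You later extract ``each $D\Psi_1$ along the orbit is invertible'' from the chain rule applied to $\Psi_N$, so the entire argument for positive definiteness of the one-step Hessian is bootstrapped from a claim that is not established; the argument is effectively circular, since ``$\Psi_N$ is a local diffeomorphism'' is more or less equivalent to the non-degeneracy you are trying to prove.

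The ingredient you are missing is precisely the one the paper uses: the strong Lipschitz estimate of Corollary~\ref{cor:strong-lip} (itself a consequence of Lemma~\ref{lem:D-2nd}), which says $|\nabla\psi(y)-\nabla\psi(x)|\le 6C|y-x|$ whenever $y\in\cD(\psi)$ and $x\in\cD^-(\psi)$. Since the differentiability points of $\psi$ are dense and the corresponding graph points lie in $W^u$, this uniform Lipschitz bound is incompatible with a vertical tangent of $W^u$ over a point of $\cD^-(\psi)$, so $\pi_1|_{W^u}$ is regular there. Once that is in hand, the rest of your outline (the backward-reach bound, the representation $\psi(x)=\min_{z\in \bar B_r}\{\psi(z)+h_N(z,x)\}$, the identification of the one-step Hessian with $D\Psi_1 = I+D^2(F+\psi)$, the chain-rule transfer of invertibility to $D\Psi_1(\bary(x_0))$, and the compactness arguments for part~(2)) would go through and is an interesting alternative organization. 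But as written, the submersion property of $\pi_1|_{W^u}$ is the crux, and it is asserted rather than proved.
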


\begin{remark}
  The case $x\notin U, y\in U$ is not covered by this Proposition, and will be dealt with separately.
\end{remark}

We need a number of lemmas.

\begin{lemma}\label{lem:local-non-deg}
Suppose at some $x \in \cD(\psi)$, there exists $D_x > 0$ such that 
\[
	\left| \psi(z) - \psi(x) - \nabla \psi(x) \cdot (z - x) \right| \le D_x |z - x|^2, 
	\quad \forall z \in \R^d, 
\]
then for $\bary = \bary(x)$,  there exists $C, r > 0$ depending only on $F$, such that 
\[
	h_\psi(\bary + v, x) - h_\psi(\bary, x) \ge \frac{|v|^2}{8(D_x + C)}, 
	\quad  \text{ for all } |v| < r. 
\]
\end{lemma}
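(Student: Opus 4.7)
The plan is to combine the hypothesis at $x$ with the Taylor expansion of $F$ at $\bary$ and the fixed-point identity $h_\psi(\bary,x)=\psi(x)$, which holds because $\bary\in\arg\min h_\psi(\cdot,x)$. First I would write
\[
h_\psi(\bary+v,x)-h_\psi(\bary,x)=\psi(\bary+v)+h(\bary+v,x)-\psi(x),
\]
and apply the lower half of the hypothesis at $z=\bary+v$. Using the identity $\nabla\psi(x)=-w$, where $w:=\bary-x$ (a consequence of Lemma~\ref{lem:T-psi}, since $\Phi(\bary,\nabla\psi(\bary))=(x,\nabla\psi(x))$ together with the explicit form of $\Phi$), and expanding $h(\bary+v,x)=\tfrac12|w+v|^2+F(\bary+v)$, a short algebraic regrouping yields
\[
h_\psi(\bary+v,x)-h_\psi(\bary,x)\ge -(\tfrac12+D_x)|w|^2+F(\bary+v)-2D_x\,w\cdot v+(\tfrac12-D_x)|v|^2.
\]

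Next I would Taylor-expand $F(\bary+v)=F(\bary)+\nabla F(\bary)\cdot v+R_F(v)$ with $R_F(v)\ge -\tfrac{1}{2}\|D^2F\|_\infty|v|^2$. Evaluating the hypothesis at $z=\bary$ and combining with the fixed-point identity $\psi(\bary)=\psi(x)-\tfrac12|w|^2-F(\bary)$ gives the sandwich $F(\bary)\in[(\tfrac12-D_x)|w|^2,(\tfrac12+D_x)|w|^2]$, so the ``constant in $v$'' term satisfies $F(\bary)-(\tfrac12+D_x)|w|^2\ge-2D_x|w|^2$. To control the linear term I would apply Lemma~\ref{lem:strong-con} to $f=\psi$ with $C'=D_x$, yielding $|\nabla\psi(\bary)-\nabla\psi(x)|\le(4D_x+2C)|w|$; the twist-map relation $\nabla F(\bary)=\nabla\psi(x)-\nabla\psi(\bary)$ then gives $|\nabla F(\bary)|\le(4D_x+2C)|w|$, and hence $|\nabla F(\bary)-2D_x w|\le(6D_x+2C)|w|$. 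Substituting yields
\[
h_\psi(\bary+v,x)-h_\psi(\bary,x)\ge -2D_x|w|^2+(\nabla F(\bary)-2D_x w)\cdot v+\Big(\tfrac{1-\|D^2F\|_\infty}{2}-D_x\Big)|v|^2.
\]

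Finally, to obtain the claimed lower bound $\tfrac{|v|^2}{8(D_x+C)}$, I would apply an AM--GM split of the form $L\cdot v\ge -|L|^2/(2\epsilon)-\epsilon|v|^2/2$ with $\epsilon$ chosen as a small multiple of $1/(D_x+C)$. This absorbs the linear term at the cost of weakening the quadratic coefficient by a factor depending on $D_x+C$, producing the announced $\tfrac{1}{8(D_x+C)}$ threshold.

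The hard part, and the main obstacle I anticipate, is handling the additive constant $-2D_x|w|^2$: the hypothesis's lower bound on $\psi$ is centred at $x$, not at $\bary$, so it fails to be tight at $\bary$, producing slack proportional to $D_x|w|^2$ that does not vanish with $v$. To dispose of it I expect the argument to exploit either (i) a case analysis based on the size of $|w|$, using the Lyapunov-type estimate $\psi(\bary)\le\kappa^2\psi(x)$ of Proposition~\ref{prop:hyp}(4) together with $\psi(x)\ge C^{-1}|x|_\T^2$ near $0$ to bound $|w|$ in terms of quantities depending only on $F$ when this regime matters, or (ii) an interpolation with the generic Lemma~\ref{lem:D-2nd}-based bound $R(v)\ge-C_h|v|^2$ (which is tight at $v=0$ but does not see the hypothesis), taking the maximum of the two inequalities to kill the $|w|^2$-slack for $|v|$ small relative to the universal scale $r(F)$.
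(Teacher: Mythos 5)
Your derivation up to the displayed inequality
\[
h_\psi(\bary+v,x)-h_\psi(\bary,x)\ge -2D_x|w|^2+(\nabla F(\bary)-2D_x w)\cdot v+\Bigl(\tfrac{1-\|D^2F\|_\infty}{2}-D_x\Bigr)|v|^2
\]
is correct, and you have correctly located the obstruction: the term $-2D_x|w|^2$ is a fixed negative constant (here $w=\bary-x$ is determined by $x$, not a free parameter), while the target $\frac{|v|^2}{8(D_x+C)}$ tends to $0$ as $v\to 0$. Since the true left-hand side vanishes at $v=0$, your lower bound is simply not tight at $v=0$ whenever $\bary\ne x$, and no amount of AM--GM on the linear term can rescue a bound that is already violated in the limit $v\to 0$. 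Neither of your proposed repairs closes this: (i) a case split on $|w|$ cannot help because $|w|$ is a fixed function of $x$, bounded but in general bounded away from $0$, and you would need $D_x|w|^2\lesssim|v|^2$; (ii) interpolating with a Lemma~\ref{lem:D-2nd}-type bound centered at $\bary$ kills the $|w|^2$ slack but then the quadratic coefficient $\frac12-C$ can be negative, which does not give any positive lower bound for general $F$.

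The structural flaw is that you apply the hypothesis at $z=\bary+v$, i.e., you probe the second-order behaviour of $\psi$ centered at $x$ but evaluate it near $\bary$, which wastes $D_x|\bary-x|^2$ worth of accuracy. The paper instead introduces a \emph{free} auxiliary displacement $w'\in\R^d$ (unrelated to $\bary-x$) and writes
\[
h_\psi(\bary+v,x)-h_\psi(\bary,x)
= \bigl[h(\bary+v,x)-h(\bary+v,x+w')\bigr]+\bigl[\psi(\bary+v)+h(\bary+v,x+w')-\psi(\bary)-h(\bary,x)\bigr].
\]
It then uses two facts simultaneously: the fixed-point \emph{equality} $\psi(\bary)+h(\bary,x)=\psi(x)$, and the minimizing \emph{inequality} $\psi(\bary+v)+h(\bary+v,x+w')\ge\psi(x+w')$, so the bracket becomes $\ge\psi(x+w')-\psi(x)$. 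Applying the hypothesis at $z=x+w'$ (this is where the hypothesis enters, with free center $w'$) and expanding the $h$-difference, every residual term is either $O(|w'|^2)$ or of the form $v\cdot w'$, and the $v$-independent error vanishes as $w'\to 0$. Setting $w'$ proportional to $v$ (with the proportionality constant $t\sim 1/(D_x+C)$) then yields exactly the claimed $\frac{|v|^2}{8(D_x+C)}$. The crucial idea you are missing is to perturb the \emph{target} point $x$, not (only) the minimizer $\bary$, and to use both the min-equality and the min-inequality at shifted target points so that the slack scales with the freely chosen $|w'|^2$ rather than with the fixed $|\bary-x|^2$.
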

\begin{proof}
Write $\bary = \bary(x)$ and $D = D_x$. Then for $v, w \in \R^d$, 
\[
	\begin{aligned}
	& h_\psi(\bary + v, x) - h_\psi(\bary, x) \\
	& = \psi(\bary + v) + h(\bary + v, x) - \psi(\bary) - h(\bary, x) \\
	& = h(\bary + v, x) - h(\bary + v, x + w) + [\psi(\bary + v) + h(\bary + v, x + w) - \psi(\bary) - h(\bary, x)] \\
	& \ge h(\bary + v, x) - h(\bary + v, x + w) + \psi(x + w) - \psi(x) \\
	& = h(\bary + v, x) - h(\bary + v, x + w) + \partial_2 h(\bary + v, x)\cdot w \\
	& \qquad - (\partial_2 h(\bary + v, x) - \partial_2 h(\bary, x)) \cdot w 
	+ [\psi(x + w) - \psi(x) - \partial_2 h(\bary, x) \cdot w] \\
	& \ge - \frac12 \|\partial_{22} h\|_{C^0} |w|^2 - \partial_{12}h(\bary, x) v \cdot w 
	- \frac12 \|\partial_{112}h\| |v|^2 |w| - D|w|^2. 	
	\end{aligned}
\]
Set $w = - t v$, and note $\partial_{12} h = I_d$, $\|\partial_{22} h\|, \|\partial_{112} h\| \le C$, we get 
\[
	h_\psi(\bary + v, x) - h_\psi(\bary, x) \ge t |v|^2 (1 - (D + C) t + C |v|). 
\]
Set $t = \frac{1}{4(D + C)}$ and $|v| < \frac{1}{4C}$, we have 
\[
	h_\psi(\bary + v, x) - h_\psi(\bary, x) \ge \frac12 \lambda |v|^2 = \frac{1}{8(D + C)} |v|^2, \quad 
	\forall v < \frac{1}{4C}.  \qedhere
\]
\end{proof}

\begin{lemma}\label{lem:glob-lower-bound}
There exists $C, R > 0$ such that for all $x \in \R^d$ and $\bary \in \argmin h_\psi(\cdot, x)$, we have 
\[
	h_\psi(\bary + v, x) - h_\psi(\bary, x) \ge \frac14 |v|^2, \quad \text{ for all } |v| > R  
\]
and 
\[
	h_\psi(\bary + v, x) - h_\psi(\bary, x) \le C|v|^2, \quad \text{ for all } v \in \R^d. 
\]
\end{lemma}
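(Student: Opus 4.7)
The plan is to write $h(y,x)=\tfrac12|x-y|^2+F(y)$ and expand
\[
h_\psi(\bary+v,x)-h_\psi(\bary,x) = [\psi(\bary+v)-\psi(\bary)] + [F(\bary+v)-F(\bary)] - v\cdot(x-\bary) + \tfrac12|v|^2.
\]
The key input is the first order condition from Lemma~\ref{lem:T-psi}: since $\bary\in\argmin h_\psi(\cdot,x)$, we have $\bary\in\cD(\psi)$ and $\nabla\psi(\bary) = -\partial_1 h(\bary,x) = (x-\bary) - \nabla F(\bary)$. Because $\psi$ is uniformly semi-concave by Proposition~\ref{prop:weak-KAM}, Lemma~\ref{lem:semi-concave-prop}(1) supplies a uniform Lipschitz constant for $\psi$, and hence
\[
|x-\bary| \le \|\nabla\psi\|_\infty + \|\nabla F\|_\infty =: M
\]
with $M$ depending only on $F$.

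For the upper bound, I would apply the super-gradient inequality to both $\psi$ and $F$ at $\bary$ (both are semi-concave with constants $C_\psi, C_F$ depending only on $F$):
\[
\psi(\bary+v)-\psi(\bary) \le \nabla\psi(\bary)\cdot v + C_\psi |v|^2, \qquad F(\bary+v)-F(\bary) \le \nabla F(\bary)\cdot v + C_F|v|^2.
\]
Summing and substituting the first order identity $\nabla\psi(\bary)+\nabla F(\bary) = x-\bary$, the linear terms in $v$ cancel exactly against the $-v\cdot(x-\bary)$ contribution, leaving
\[
h_\psi(\bary+v,x)-h_\psi(\bary,x) \le \bigl(C_\psi+C_F+\tfrac12\bigr)|v|^2,
\]
which is the desired upper bound with $C=C_\psi+C_F+\tfrac12$.

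For the lower bound, I would use that $\psi,F\in\Cper(\R^d)$ are uniformly bounded, so $\psi(\bary+v)-\psi(\bary)+F(\bary+v)-F(\bary)\ge -K$ with $K:=2(\|\psi\|_\infty+\|F\|_\infty)$. Combined with $|v\cdot(x-\bary)|\le M|v|$, this gives
\[
h_\psi(\bary+v,x)-h_\psi(\bary,x) \ge \tfrac12|v|^2 - M|v| - K,
\]
and solving the quadratic inequality $\tfrac14|v|^2 \ge M|v|+K$ picks out $R := 2M+2\sqrt{M^2+K}$, so the right hand side exceeds $\tfrac14|v|^2$ whenever $|v|\ge R$.

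The computation is essentially direct, and I do not anticipate a serious obstacle. The one point demanding care is uniformity: the constants $M$, $K$, $C_\psi$, $C_F$ must depend only on $F$, not on the particular $x$ or the particular minimizer $\bary$. This uniformity follows from the uniform semi-concavity of $\psi$ provided by Proposition~\ref{prop:weak-KAM} together with the periodicity of $\psi$ and $F$; the semi-concavity of $F$ is immediate from $F\in\Cper^3$.
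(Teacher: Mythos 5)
Your proof is correct and follows essentially the same route as the paper: both establish a uniform bound $|x-\bary|\le M$ depending only on $F$, then for the lower bound drop the bounded terms $\psi,F$ and keep only the quadratic $\tfrac12|v|^2$ minus a linear cross term, and for the upper bound invoke semi-concavity at $\bary$. Your derivation of $|x-\bary|\le\|\nabla\psi\|_\infty+\|\nabla F\|_\infty$ via the explicit first-order condition $\nabla\psi(\bary)=(x-\bary)-\nabla F(\bary)$ from Lemma~\ref{lem:T-psi} is a slightly more direct rendering of the same fact that the paper obtains through the boundedness of $\Phi^{-1}\bigl(\overline{\cG_\psi}\bigr)$.
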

\begin{proof}
The upper bound follows directly from semi-concavity, we only prove the lower bound. 

We first claim that there is a constant $C > 0$ such that $|\bary - x| < C$ for all $\bar{y} \in \argmin h_\psi(\cdot, x)$. Indeed, since $\psi$ is uniformly Lipschitz, $\overline{\cG_\psi}$ has uniformly bounded $p$ component. Therefore $\Phi^{-1}(\overline{\cG_\psi})$ is of bounded distance away from $\cG_\psi$, implying the claim. 

By periodicity, it suffices to prove our lemma for $x \in [-\frac12, \frac12)^d$. Since $|\bary - x| < C$, by resetting $C$ 
\[
	\begin{aligned}
	& h_\psi(\bary + v, x) - h_\psi(\bary, x) \ge \frac12 |\bary + v - x|^2 - \frac12 |\bary - x|^2 - 2 \|\psi\|_{C^0}-2\|F\|_{C^0} \\
	& \ge \frac12 |v|^2 - |v| |\bary - x| - 2\|\psi\|_{C^0} - 2 \|F\|_{C^0}
	\ge \frac12 |v|^2 - |v| C - 2 \|\psi\|_{C^0}-2\|F\|_{C^0}. 
	\end{aligned}
\]
Suppose $\frac18 R > C$ and $\frac18 R^2 > 2 \|\psi\|_{C^0}$, we get $h_\psi(\bary + v, x) - h(\bary, x) \ge \frac14 |v|^2$. 
\end{proof}

\begin{proof}[Proof of Proposition~\ref{prop:U-non-deg}]
Let $\cE \subset W^u \subset \T^d \times \R^d$ be the set of critical points for the projection $\pi_1|W: \T^d \times \R^d \to \T^d$. Then $\cE$ is a compact nowhere dense set of zero Lebesgue measure. We claim that $\cD^-(\psi) \cap \pi_1 \cE$ is empty. Otherwise, there must exists $x_0 \in \cD^-(\psi) \cap \pi_1 \cE$ and $ \cD(\psi) \ni x_k \to x_0$ such that $|\nabla \psi(x_k) - \nabla \psi(x_0)|/|x_k - x_0| \to \infty$ as $k \to \infty$, contradicting Corollary~\ref{cor:strong-lip}. 

This claim implies that the projection $\pi_1|W: W \to \T^d$ is regular at every point of $\cG^- : = \{(x, \nabla \psi(x)) \st x \in \cD^-(\psi)\}$. Implicit function theorem then implies that $\cG^-$ is contained in a $C^2$ smooth graph over an open neighborhood $U_1$ of $\cD^-(\psi)$. Using Corollary~\ref{cor:strong-lip} again, we conclude that for every $y \in \cD(\psi) \cap U_1$, $\nabla \psi$ must be contained in the same graph. We have now proven $\nabla \psi$ coincides with a $C^2$ function at almost every point in $U_1$, therefore $\psi$ must be $C^3$. The $C^2$  norm of $\psi$ is bounded as long as $\overline{U_1} \cap \pi_1 \cE = \emptyset$. The same hold if we lift $U_1$ to an open set in $\R^d$. This proves item (1) of our Proposition on the set $U_1$. 

Set $U = \{z \in U_1 \st \dist(z, \cD^-(\psi)) \le \frac12 \dist (\partial U_1, \cD^-(\psi))\}$, we claim that there exists $D > 0$ such that 
\begin{equation}
	\label{eq:U-2nd}
	\left| \psi(z) - \psi(y) - \nabla \psi(y) \cdot (z - y) \right| \le D |z - y|^2, 
	\quad \forall z \in \R^d, \, y \in U. 
\end{equation}
WE have proven that exists $D_1 > 0$ such that for all $y, x \in U_1$, 
\[
	|\nabla \psi(y) - \nabla \psi(x)| \le D_1 |y - x|. 
\]
If $z \in U_1$, \eqref{eq:U-2nd} holds since $\psi$ is $C^2$ with an uniform $C^2$ norm. Suppose $z \in \R^d \setminus U_1$. Then for each $y \in U$, there exists $x \in \cD^-(\psi)$ such that $|y - x| \le \frac12 |z - x|$, and hence $|z - y| \ge |z - x| - |y - x| \ge \frac12 |z - x|$. By Lemma~\ref{lem:D-2nd}, there exists $D_2 > 0$ such that 
\[
	|\psi(z) - \psi(x) - \nabla \psi(x) \cdot(z - x)| \le D_2 |z - x|^2. 
\]
Combine the two, we get 
\[
	\left| \psi(z) - \psi(y) - \nabla \psi(y) \cdot (z - y) \right| 
	\le D_2 |z - x|^2 + D_1 |y - x|^2 \le D_3 |z - y|^2
\]
for an absolute constant $D_3 > 0$. 

We now prove item (2). First, suppose $y \notin U$. 
\[
	\delta_1 = \min_{x \in \T^d, \, y \notin U} h_\psi(y, x) - h_\psi(\bary(x), x). 
\]
Then $\delta_1 > 0$ by compactness argument. If $|y - \bary(x)| \le R$ ($R$ is from Lemma~\ref{lem:glob-lower-bound}), we have
\[
	h_\psi(y, x) - h_\psi(\bary(x), x) \ge \delta \ge \frac{\delta_1}{R^2} |y - \bary(x)|^2. 
\]
If $|y - \bary(x)| > R$, Lemma~\ref{lem:glob-lower-bound} applies. 

Suppose $x \in U$. It follows from \eqref{eq:U-2nd} and Lemma~\ref{lem:local-non-deg} that there exists $r > 0$ and $\delta_2 > 0$ such that 
\[
	h_\psi(y, x) - h_\psi(\bary(x), x) \ge \delta_2 |y - \bary(x)|^2, \quad
	\text{ for all } x \in U, \, |y - \bary(x)| < r. 
\]
If $|y - \bary(x)| \ge r$, we set
\[
	\delta_3 = \min_{x \in \overline{U}, \, |y - \bary(x)| \ge r} h_\psi(y, x) - h_\psi(\bary(x), x) > 0, 
\]
and proceed in the same way as the $y \notin U$ case. 
\end{proof}

\section{The viscous equation via Hopf-Cole transformation}
\label{sec:hopf-cole}

Consider the  \eqref{eq:HJ-vis-ivp} with $T_0 = - n$, $T = 0$. We apply the Hopf-Cole transformation
\[
	u = \exp \left( -\frac{\varphi}{2\nu} \right), 
\]
which transform it to the inhomogeneous heat equation 
\begin{equation}
	\label{eq:heat}
	\begin{aligned}
	& u_t  = \nu \Delta u - \frac{1}{2\nu} F(x, t) u, & (x, t) \in \R^d \times (-n, 0)\\
	& u(x, -n) = u_0(x) =  e^{- \frac{1}{2\nu} \varphi_0(x)} & x \in \R^d.  
	\end{aligned}
\end{equation}

The solution to \eqref{eq:heat} is given by the Feynman-Kac formula, namely 
\[
	u(x, t) = \int dy \, u_0^\nu(y) \int \exp \left( - \frac{1}{2\nu} \int_0^t F(W (\tau)) d\tau \right) d \Pi_{x, y}^{(t,0)}(\nu; W),
\]
where $\Pi_{x,y}^{t,0}(\nu; \cdot)$ is the probability distribution of the Brownian motion $dx = \sqrt{2\nu} dW$ with the condition $W(0)= y$ and $W(t) = x$. 

For the kicked case, the formula for the solution is simplified. To solve from time $i$ to $i+1$, we multiply the function $u^-(\cdot, i)$ by the factor $e^{-\frac{1}{2\nu} F}$ to obtain $u^+(\cdot, i)$, representing the kicked force. We then solve the heat equation without force on the interval $(i, i+1)$ to obtain $u^-(\cdot, i+1)$. Formally, we have $u^-(\cdot, i+1) = \cL_\nu \left( u^-(\cdot, i) \right)$, where
\[
	\cL_\nu (u)(x) = \int K_\nu(y, x) u(y) dy, 
\]
and 
\[
	\begin{aligned}
	K_\nu(y, x) & = \frac{1}{(4\pi \nu)^{d/2}} \exp\left( -\frac{1}{2\nu} h(y, x) \right) \\
	& :=  \frac{1}{(4\pi \nu)^{d/2}} \exp\left( - \frac{1}{2\nu}\left( \frac12(y-x)^2 + F(y) \right) \right). 
	\end{aligned}
\]
Then for $n \in \N$, the solution $u_{-n}(x, 0)$ to \eqref{eq:heat} satisfy
\[
	\begin{aligned}
    & u_{-n}(x, 0)  = \cL_\nu^n (u_0)(x)  \\
	& \quad  = \idotsint u_0(x_{-n}) K_\nu(x_{-n}, x_{-n+1}) \cdots K_\nu(x_{-1}, x) d x_{-n} \cdots d x_{-1}. 	
	\end{aligned}
\]

The counterpart to Theorem~\ref{thm:unif-exp} in this setting is:
\begin{theorem}\label{thm:exp-conv-heat}
There exists $\nu_0 > 0$, $C > 0$ and $\lambda > 0$ depending only on $F$, such that the following hold for all  $\nu \in (0, \nu_0)$. There exists $u^\nu \in \Cper(\R^d)$ such that $0 \le \log u^\nu \le C/\nu$ and 
\[
\|\log\cL_\nu u^\nu - \log u^\nu\|_* = 0. 
\]
Moreover, for each $0 \le \log u_0, \log v_0 \le D/\nu$, we have 
\[
	\|\log \cL_\nu^n u_0 - \log \cL_\nu^n v_0\|_* \le e^{\frac{C  + D}{\nu}} e^{-\lambda n},\quad  
	\text{ for }  n \ge C/\nu. 
\]
In particular, $u^\nu$ is unique up to a constant. 
\end{theorem}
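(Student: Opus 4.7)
My plan follows the four-step strategy described in the introduction; Theorem~\ref{thm:exp-conv-heat} is proved (in Section~\ref{sec:boostrap}) modulo Proposition~\ref{prop:laplace}. I first conjugate out the weak KAM solution $\psi$: set $\tL_\nu f := e^{\psi/(2\nu)}\,\cL_\nu\!\left(e^{-\psi/(2\nu)} f\right)$, whose kernel becomes
\[
\tilde K_\nu(y,x) \;=\; (4\pi\nu)^{-d/2}\exp\!\left(-\frac{h_\psi(y,x)-\psi(x)}{2\nu}\right),
\]
where $h_\psi(y,x)-\psi(x)\ge 0$ by Proposition~\ref{prop:weak-KAM} and vanishes precisely on $y=\bary(x)$. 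Since conjugation by $e^{-\psi/(2\nu)}$ shifts $\|\cdot\|_*$ by at most $\|\psi\|_*/(2\nu)$, the theorem reduces to a uniform spectral gap for $\tL_\nu$ on data with $\|\log\tilde u_0\|_* = O(1/\nu)$. Writing $Z_n := \tL_\nu^n\bOne$, the ratio $Q_n(x)=\tL_\nu^n\tilde u_0(x)/Z_n(x)$ is a conditional expectation of $\tilde u_0$ under the inhomogeneous backward Markov chain whose one-step transitions are $\tilde K_\nu(y,x)\,Z_{n-1}(y)/Z_n(x)\,dy$; contraction in $\|\cdot\|_*$ of $\log \tL_\nu^n \tilde u_0$ for two initial data is exactly contraction of these chains.

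For Steps~2 and~3, I invoke Proposition~\ref{prop:laplace} to obtain uniform two-sided bounds $C^{-1}\le Z_n(x)\le C$ on the initial window $0\le n\le N_0(\nu)\sim(\nu\log\nu^{-1})^{-1/3}$; its input is Laplace's method applied at each minimizer $\bary(x)$ (whose non-degeneracy is provided by Proposition~\ref{prop:U-non-deg}) combined with the hyperbolic contraction $\psi\circ\bary\le\kappa^2\psi$ of Proposition~\ref{prop:hyp}(4), which keeps the $n$-step minimizing trajectories clustered near the origin. With those bounds in hand, Proposition~\ref{prop:pi-hm} and Corollary~\ref{cor:contraction}, which implement a Hairer--Mattingly \cite{HM2011} Lyapunov--minorization scheme, produce a uniform-in-$\nu$ contraction rate $\lambda_0>0$ for the Markov chain on the window: the function $\psi$ itself serves as the Lyapunov function (via Proposition~\ref{prop:hyp}(4)), while the Gaussian factor in $\tilde K_\nu$ on a small ball around $0$ supplies the small-set minorization as long as $Z_n$ stays bounded below.

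Step~4 is the bootstrap. Applied to the constant initial datum $\bOne$, Step~3 shows that $\log Z_{N_0}$ has small $\|\cdot\|_*$-oscillation, so $Z_{N_0+m} = \tL_\nu^m Z_{N_0}\approx c\cdot Z_m$ is again uniformly bounded on $m\le N_0(\nu)$. This extends the hypothesis of Proposition~\ref{prop:pi-hm} to $[0,2N_0(\nu)]$, and Step~3 applies again; iterating $O(\nu^{-1})$ rounds lifts the contraction to all $n\ge C/\nu$ with the same uniform rate $\lambda$. Translating back through the conjugation, part~(b) of the theorem follows, with the prefactor $e^{(C+D)/\nu}$ recording the initial spread $\|\log\tilde u_0-\log\tilde v_0\|_*$. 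Part~(a) is then obtained by applying~(b) to the Cauchy sequence $\{\log\cL_\nu^n\bOne\}$ in $(\Cper,\|\cdot\|_*)$; its limit $u^\nu$ satisfies $\|\log u^\nu\|_*\le C/\nu$ thanks to the uniform bound on $\log Z_n$ plus $\|\psi\|_*/(2\nu)$, and part~(c) is immediate from~(b) applied to two putative fixed points.

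The main obstacle is making Step~4 self-sustaining. The conclusion of Step~3 lives in a $\psi$-weighted norm, and one must verify that translating back to $\|\cdot\|_*$ while iterating $O(\nu^{-1})$ rounds does not erode the uniform gap $\lambda_0$. Equivalently, the oscillation of $\log Z_{N_0}$ gained at each round must be strictly smaller than what is needed to renew the Laplace-type two-sided bound $C^{-1}\le Z_n\le C$ on the next window; if the constants degenerated even polynomially in $\nu^{-1}$, the accumulated loss across $O(\nu^{-1})$ rounds would destroy the gap. Balancing this per-round loss against the exponential gain from $\lambda_0$ is precisely what forces the threshold $n\ge C/\nu$ and the prefactor $e^{(C+D)/\nu}$ in the statement.
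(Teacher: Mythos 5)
Your proposal tracks the paper's four-step strategy (conjugation, initial-window Laplace estimates, Hairer--Mattingly contraction, bootstrap), and steps~1--3 are substantially correct in outline, modulo two inaccuracies: the partition-function bound in Proposition~\ref{prop:laplace} is $C^{-1}\le \tL_\nu^n\bOne(x)/Q_n\le C\chi_\nu(x)$, not $C^{-1}\le Z_n\le C$ (the normalizing sequence $Q_n$ may grow geometrically, and outside $U$ there is an extra factor $\nu^{-d/2}$); and the Lyapunov function is $V=\psi\chi_\nu^2$, not $\psi$ itself.

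The genuine gap is in your Step~4. You frame the bootstrap as an iteration over $O(\nu^{-1})$ rounds in which a polynomial-in-$\nu^{-1}$ loss is incurred each round and then propose that ``balancing this per-round loss against the exponential gain $\ldots$ forces the threshold $n\ge C/\nu$ and the prefactor $e^{(C+D)/\nu}$.'' This is not how the argument works, and taken literally it would fail: if a multiplicative error of size $\nu^{-p}$ were genuinely incurred per window, the two-sided bound on $\tL_\nu^n\bOne$ would widen geometrically over $O(\nu^{-1})$ windows and would \emph{not} re-qualify as input for Corollary~\ref{cor:contraction} at later windows. The paper avoids this entirely. Theorem~\ref{thm:bootstrap} establishes a \emph{single} uniform constant $M$ such that $M^{-1}\le\tL_\nu^n\bOne/Q_n\le M\chi_\nu$ for \emph{all} $n\in\N$ simultaneously, with no accumulation. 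The inductive step goes from the window $[0,kN]$ to $[0,(k+1)N]$ using the identity $\tL_\nu^n\bOne=(\tL_\nu^N\bOne)\cdot\cP^N(\tL_\nu^{n-N}\bOne)$: the factor $\tL_\nu^N\bOne$ is controlled by the base case, and the oscillation of $\cP^N(\tL_\nu^{n-N}\bOne/R_{n-N})$ is bounded by $2CC_1M^2\nu^{-3d/2-1}\alpha^{N}$, which is forced below $1$ by choosing $\nu$ small --- precisely because $N=N_1(\nu)\sim(\nu\log\nu^{-1})^{-1/3}\to\infty$ makes $\alpha^{N_1(\nu)}$ vanish faster than any fixed power of $\nu$. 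So the constant $M$ renews itself exactly; nothing compounds.

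The prefactor $e^{(C+D)/\nu}$ and the threshold $n\ge C/\nu$ come from a different place than you suggest. Once Theorem~\ref{thm:bootstrap} is in hand, Corollary~\ref{cor:contraction} gives $\|\cP^n u\|_{\beta V,*}\le\alpha^n\|u\|_{\beta V,*}$ for all $n$, with $\beta\sim\nu^{-1}$. Converting between $\|\cdot\|_{\beta V,*}$ and $\|\cdot\|_*$ (Lemma~\ref{lem:beta_estimate}) costs a factor $O(\nu^{-d-1})$; applied to data with $0\le\log u_0\le D/\nu$, one obtains $\|\cP^n u\|_*\le C\alpha^n\nu^{-d-1}e^{D/\nu}$. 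One then passes to $\log$ via Lemma~\ref{lem:ratio-star}, which requires $\|\cP^n u\|_*<1/4$; this is exactly what forces $n\ge C_1/\nu$, and the $e^{(C+D)/\nu}$ records the initial spread $e^{(D+\|\psi\|_{C^0})/\nu}$. So the threshold is the burn-in time needed for the uniform gap $\alpha^n$ to overcome the initial amplitude of the data after conjugation, not a budget for accumulated bootstrap losses.
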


Theorem~\ref{thm:exp-conv-heat} is proven in Section~\ref{sec:boostrap}. We first show that our main theorem follows from Theorem~\ref{thm:unif-exp}. 

\begin{proof}[Proof of Theorem~\ref{thm:unif-exp}]
  We normalize $\varphi_0$ so that $\min \varphi_0 = 0$, and set $v_0 = e^{-\frac{1}{2\nu} \varphi_0}$, and  $\psi^\nu = - 2\nu \log u^\nu$ where $u^\nu$ is from Theorem~\ref{thm:exp-conv-heat}. Both $u^\nu$ and $v_0$ satisfy the assumptions of Theorem~\ref{thm:exp-conv-heat} with $D = \|\varphi_0\|_*$. It follows that 
\[
\|\varphi_n^\nu(\cdot, 0) - \psi^\nu(\cdot)\|_* = 2\nu \| \log \tL_\nu^n v_0 - \log \tL_\nu^n u^\nu\|_*
< e^{\frac{\|\varphi_0\|_* + C}{\nu}} e^{-\lambda n}
\]
Theorem~\ref{thm:unif-exp} follows. 
\end{proof}

\subsection{The conjugate kernel}

Let $\psi$ be the unique solution to \eqref{eq:lo-fixed}. Define
\[
	\tlh(y, x) = h(y, x) + \psi(y) - \psi(x).  
\]
The conjugate kernel is more convenient to study since $\argmin \tlh(\cdot, x) = \argmin h_\psi(\cdot, x)$, but in addition $\min \tlh(\cdot, x) = 0$ for all $x \in \R^d$. Define
\begin{equation}
	\label{eq:conj-L}
	\begin{aligned}
	\tlK_\nu(y, x) & = \exp \left( - \frac{1}{2\nu} \tlh(y, x) \right) = e^{\frac{1}{2\nu}\psi(x)} K_\nu(y, x)  e^{-\frac1{2\nu} \psi(y)}, \\
	\tL_\nu (u)(x) & = \int \tlK_\nu(y, x) u(y) dy = e^{\frac{1}{2\nu}\psi(x)} \cL_\nu \left( e^{-\frac{1}{2\nu}\psi} u \right). 
	\end{aligned}
\end{equation}
It's easy to see that $\tL_\nu^n (u) = e^{\frac{1}{2\nu}\psi} \cL_\nu^n \left( e^{-\frac{1}{2\nu}\psi} u \right)$. 

Proposition~\ref{prop:U-non-deg} implies:
\begin{corollary}\label{cor:tlh-lower}
\[
	\tlh(y, x) \ge \delta |y - \bary(x)|^2, \quad
	\text{ for all } x \in U, \, y \in \R^d,
\]
\[
	\tlh(y, x)  \ge \delta, \quad 
	\text{ for all } y \notin U. 
\]
\end{corollary}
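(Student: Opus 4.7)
The plan is to observe that the conjugate kernel differs from $h_\psi$ by an $x$-dependent additive constant, which is exactly the minimum of $h_\psi(\cdot,x)$; once this identification is made, the corollary becomes a direct translation of Proposition~\ref{prop:U-non-deg}.

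First, I would unfold definitions: by construction $\tlh(y,x) = h(y,x) + \psi(y) - \psi(x) = h_\psi(y,x) - \psi(x)$. The key point is to identify $\psi(x)$ with $\min_y h_\psi(\cdot,x)$. This is precisely the fixed-point relation \eqref{eq:lo-fixed} from Proposition~\ref{prop:weak-KAM}: since $T(\psi) = \psi$, we have $\psi(x) = \min_y\{\psi(y) + h(y,x)\} = h_\psi(\bary(x),x)$ whenever $x \in \cD(\psi)$ (and the identity $\psi(x) = \min_y h_\psi(\cdot,x)$ holds everywhere by continuity, using that $\bary$ makes sense as a minimizer for every $x$ by Lemma~\ref{lem:pseudograph}). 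Consequently
\[
	\tlh(y,x) = h_\psi(y,x) - \min_y h_\psi(\cdot,x) = h_\psi(y,x) - h_\psi(\bary(x),x).
\]

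With this rewrite, both claims follow directly from part (2) of Proposition~\ref{prop:U-non-deg}. For the first inequality, take any $x \in U$ and $y \in \R^d$, and apply the quadratic lower bound to get $\tlh(y,x) \ge \delta|y - \bary(x)|^2$. For the second inequality, take any $y \notin U$ (and any $x$); the second bound of Proposition~\ref{prop:U-non-deg}(2) gives $h_\psi(y,x) - \min h_\psi(\cdot,x) \ge \delta$, which is exactly $\tlh(y,x) \ge \delta$.

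There is essentially no obstacle here beyond bookkeeping: the only subtle point is making sure the identity $\psi(x) = \min_y h_\psi(\cdot,x)$ is valid at every $x \in \R^d$ and not merely on $\cD(\psi)$, but this is immediate from the weak KAM fixed-point equation, which holds pointwise everywhere. All the nontrivial work (non-degeneracy of the minimum, uniform gap outside $U$) has already been done in Proposition~\ref{prop:U-non-deg}.
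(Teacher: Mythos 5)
Your proof is correct and matches the paper's intent: the paper presents Corollary~\ref{cor:tlh-lower} as an immediate consequence of Proposition~\ref{prop:U-non-deg} with no written argument, and your unwinding of $\tlh(y,x) = h_\psi(y,x) - \psi(x)$ together with the fixed-point identity $\psi(x) = \min_y h_\psi(\cdot,x)$ is exactly what makes that implication go through.
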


\subsection{The Markov kernel}

We convert the kernels into Markov ones following \cite{Sinai1991}. Let $\bOne$ denote the constant function $1$, define:
\begin{equation}
	\label{eq:markov}
	\begin{aligned}
	& \pi_\nu^{(0)}(y, x)  = \frac{\tlK_\nu(y, x)}{\int \tlK_\nu(y, x) dy} = \frac{\tlK_\nu(y, x)}{\tL_\nu (\bOne) (x)}, \\
	& \pi_\nu^{(1)}(y, x)  =  \frac{\tlK_\nu(y, x)\tL_\nu(\bOne)(y)}{\tL_\nu^2(\bOne)(x)}, \\
	& \cdots  \\
	& \pi_\nu^{(n+1)}(y, x)  = \frac{\tlK_\nu(y, x) \tL_\nu^n (\bOne)(y)}{\tL_\nu^{n+1} (\bOne) (x)}, 
	\end{aligned}
\end{equation}
each kernel is Markov in the sense that 
\[
	\int \pi_n(y, x)  dy = \cP (\bOne)(x) = \bOne. 
\]
Define the Markov operators acting on functions
\[
	\cP_\nu^{(n)}(u)(x) = \int \pi_\nu^{(n)}(y, x) u(y) dy, \quad n \ge 0, \quad
	\cP_\nu^n(u) = \cP^{(n-1)}_\nu \cdots \cP^{(0)}_\nu u, 
\]
then
\begin{equation}
  \label{eq:telescope}
  \frac{\tL_\nu^n u(x)}{\tL_\nu^n \bOne(x)}
  = \int \pi_\nu^{(n-1)}(y, x) \frac{\tL_\nu^{n-1} u(y)}{\tL_\nu^{n-1} \bOne(y)} dy 
  = \cP_\nu^{(n-1)} \cdots \cP_\nu^{(0)} (u) = \cP_\nu^n(u). 
\end{equation}
The function $\tL^n \bOne $ is known as the \emph{partition function} in statistical mechanics. 

Let $U$ be as in Proposition~\ref{prop:U-non-deg}, define
\begin{equation}
	\label{eq:chi-nu}
	\chi_\nu(x) = \begin{cases}
	1 &  x \in U, \\
	\nu^{-\frac{d}{2}} & x \notin U. 
	\end{cases}  
\end{equation}
The following estimate of the partition function is a crucial technical step in our proof. 
\begin{proposition}\label{prop:laplace}
There exist $C > 1$, $\nu_0 > 0$, $Q_n > 0$ satisfying 
\[
	C^{-1} \le Q_{n+1} / Q_{n} \le C, 
\]
such that for $N_1(\nu) = C^{-1} (\nu \log \frac{1}{\nu})^{-\frac13}$,  
\[
	C^{-1} \le \frac{\tL_\nu^n (\bOne)(x)}{Q_n} \le C \chi_\nu(x),  \quad \text{for all} \quad 0 \le n \le N_1(\nu), \, 0 < \nu \le \nu_0.
\]
\end{proposition}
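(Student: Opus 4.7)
The plan is to iterate Laplace's method on
\[
\tL_\nu^{n+1}\bOne(x) \;=\; \int \exp\!\Bigl(-\tfrac{\tlh(y,x)}{2\nu}\Bigr)\, \tL_\nu^n\bOne(y)\,dy,
\]
exploiting that the phase $\tlh(\cdot,x)$ vanishes at $\bary(x)\in\cD^-(\psi)\subset U$, where by Proposition~\ref{prop:U-non-deg} we have uniform control of $\psi\in C^3$ together with a uniformly non-degenerate minimum $\tlh(y,x)\ge\delta|y-\bary(x)|^2$. I would define $Q_n$ so as to absorb the leading Laplace prefactor, with $Q_{n+1}/Q_n$ essentially $(2\pi\nu)^{d/2}/\sqrt{\det D_y^2\tlh(\bary(x),x)}$ evaluated near a fixed reference point such as the origin; the exponential contraction $\bary^k(x)\to 0$ from Proposition~\ref{prop:hyp}(4) then guarantees that the effective Hessians stabilize and that $Q_{n+1}/Q_n$ is pinched between constants depending only on $F$.

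For the base case $n=1$ and $x\in U$, I would Taylor-expand $\tlh(\cdot,x)$ around $\bary(x)$ to second order (using that $\bary(x)\in U$ so $\psi$ is $C^3$ there), and read off $\tL_\nu\bOne(x)\asymp\nu^{d/2}$ via the resulting Gaussian integral with uniformly positive definite Hessian. For $x\notin U$, the upper bound comes from the trivial bound $e^{-\tlh/(2\nu)}\le 1$ on $U$ plus the exponentially small tail $\int_{U^c}e^{-\tlh/(2\nu)}\le e^{-\delta/(2\nu)}$ from Corollary~\ref{cor:tlh-lower}, which together yield $\tL_\nu\bOne(x)\le C\le C\nu^{-d/2}Q_1$, matching $\chi_\nu$. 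The lower bound $\tL_\nu\bOne(x)\gtrsim\nu^{d/2}$ follows by integrating only over the ball of radius $\sqrt{\nu}$ around $\bary(x)\in U$ and using the semi-concave upper bound $\tlh(y,x)\le C|y-\bary(x)|^2$ from Lemma~\ref{lem:glob-lower-bound}.

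For the inductive step I would split $\tL_\nu^{n+1}\bOne(x)=\int_U+\int_{U^c}$; the second piece is negligible by Corollary~\ref{cor:tlh-lower}. On $U$, substituting $z=(y-\bary(x))/\sqrt{\nu}$ and applying Laplace's method yields
\[
\tL_\nu^{n+1}\bOne(x)\;\approx\;\tL_\nu^n\bOne(\bary(x))\cdot\frac{(2\pi\nu)^{d/2}}{\sqrt{\det H(x)}},\qquad H(x)=D^2_y\tlh(\bary(x),x).
\]
To close the induction I would strengthen the hypothesis to carry, alongside the two-sided bound, a regularity estimate on $\tL_\nu^n\bOne/Q_n$ over $U$ on the scale $\sqrt{\nu\log\nu^{-1}}$ (a Lipschitz-type bound, or a comparison with values at the contracting preimages $\bary^k(x)$), so that $\tL_\nu^n\bOne$ may be treated as essentially constant over the effective Gaussian window with only a controlled multiplicative error. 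Setting $Q_{n+1}=Q_n\cdot(2\pi\nu)^{d/2}/\sqrt{\det H(0)}$ then gives bounded ratios, and combining the displayed estimate with the induction hypothesis on $\tL_\nu^n\bOne(\bary(x))$ yields the required bounds on $\tL_\nu^{n+1}\bOne$.

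The main obstacle is the error control across $n$ iterations. Truncating the integral at radius $R=\sqrt{C\nu\log\nu^{-1}}$ makes the Gaussian tail polynomially small; inside this radius the $C^3$ regularity of $\tlh$ leaves a cubic Taylor remainder contributing a factor $\exp(O(R^3/\nu))=1+O(\nu^{1/2}(\log\nu^{-1})^{3/2})$ in the integrand, and the variation of $\tL_\nu^n\bOne/Q_n$ over scale $R$ contributes further. These errors accumulate multiplicatively, and the threshold $N_1(\nu)=C^{-1}(\nu\log\nu^{-1})^{-1/3}$ is calibrated exactly so that the cumulative product remains bounded. The hard part will be to propagate the sharp regularity estimate on $\tL_\nu^n\bOne/Q_n$ uniformly in $n$, since without it the Laplace expansion degrades at every step; here the hyperbolic contraction of $\bary$ towards the origin from Proposition~\ref{prop:hyp} should be the key tool, preventing mass from leaking into regions where the needed smoothness of $\psi$ fails and ensuring that the iterated base points $\bary^k(x)$ rapidly enter a neighbourhood of $0$ in which the Laplace asymptotics are well-behaved.
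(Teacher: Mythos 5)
Your proposal takes a genuinely different route from the paper, and it contains a gap at its crux that you yourself identify but do not resolve.

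The paper does \emph{not} iterate a single-step Laplace method. Instead, it writes $\tL_\nu^n\bOne(x)$ as a single integral over $\R^{nd}$ of $\exp(-H_{n,x}(X)/(2\nu))$ and applies Laplace's method once, in dimension $nd$. The machinery that makes this work uniformly in $n$ is all dynamical: the minimizer $X^*(x)$ is the backward orbit of $(x,\nabla\psi(x))$ under the twist map, and the $nd\times nd$ Hessian $\cA^*_{n,x}$ factors as $\det\prod_{i=-n}^{-1}(I_d+D^2(F+\psi)(x_i^*))$ along that orbit (Proposition~\ref{prop:Hessian-bound}), with a uniform lower bound on its smallest eigenvalue (Proposition~\ref{prop:min_eigenvalue}, via Poincar\'e separation). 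A global lower bound $H_{n,x}(X)\ge\delta$ off a tube around $X^*$ (Lemma~\ref{lem:H-inf-lower}) plus the integrability bound $\int e^{-H_{n,x}}\le C^{nd}$ (Lemma~\ref{lem:H-integrable}) controls the tail; the threshold $N_1(\nu)\sim(\nu\log\nu^{-1})^{-1/3}$ falls out of balancing the $\nu^{-nd/2}$ prefactor against the exponential gap at the tube radius $r_1(n)\sim 1/n$, i.e.\ $nd\log\nu^{-1}\lesssim n^{-2}\nu^{-1}$.

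Your plan instead iterates the one-dimensional-window Laplace step $\tL_\nu^{n+1}\bOne(x)\approx\tL_\nu^n\bOne(\bary(x))(4\pi\nu)^{d/2}/\sqrt{\det H(x)}$ and runs an induction on $n$. This is an attractive idea, but the inductive step does not close as stated. To evaluate $\int \tlK_\nu(y,x)g_n(y)\,dy$ with $g_n=\tL_\nu^n\bOne/Q_n$, you must replace $g_n$ by its value at $\bary(x)$ over a window of size $O(\sqrt{\nu\log\nu^{-1}})$, which requires an a priori modulus-of-continuity estimate on $g_n$ on that scale. You acknowledge this (''the hard part will be to propagate the sharp regularity estimate on $\tL_\nu^n\bOne/Q_n$ uniformly in $n$''), but no mechanism is given, and there is a real obstruction: the smoothing of the kernel acts at scale $\sqrt{\nu}$, while the contraction $\bary^k(x)\to 0$ compresses all base points toward a single point, so Lipschitz constants of $g_n$ could a priori degrade as $n$ grows. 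Your error accounting is also incomplete: counting only the cubic Taylor remainder $\exp(O(R^3/\nu))=1+O(\nu^{1/2}(\log\nu^{-1})^{3/2})$ per step would allow $n$ up to $\nu^{-1/2}(\log\nu^{-1})^{-3/2}$, a much larger threshold than $N_1(\nu)$; this shows the dominant error source (the oscillation of $g_n$ over the window, which you have not bounded) has been left out. Until that propagated regularity estimate is supplied --- and it is genuinely the heart of the problem in your formulation --- the induction cannot be carried out. The paper avoids this issue entirely by never asking for regularity of $\tL_\nu^n\bOne$: it only needs pointwise two-sided bounds, which it gets from the $nd$-dimensional Hessian estimates.

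Minor remarks: your normalization $Q_{n+1}=Q_n\,(2\pi\nu)^{d/2}/\sqrt{\det H(0)}$ double-counts the $(4\pi\nu)^{d/2}$ already present in $\tlK_\nu$; with the kernel's built-in prefactor the ratio $Q_{n+1}/Q_n$ should be the $\nu$-independent quantity $(\det H(0))^{-1/2}$, in line with the paper's geometric $Q_n$. Your treatment of the base case $n=1$ and of the regime $x\notin U$ is fine and essentially matches the paper's second half, where the $x\notin U$ bound is bootstrapped from the $x\in U$ case by one application of $\tL_\nu$.
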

Proposition~\ref{prop:laplace} is proven by applying the classical Laplace method, but trying to obtain uniform estimates in $n$. The proof is postponed to the last two sections of this paper.

\section{Uniform contraction for the Markov operator}
\label{sec:hm}

\subsection{Lyapunov functions}

We describe the Lyapunov function approach to the convergence of Markov operators by Hairer and Mattingley (\cite{HM2011}). Let $\pi$ be a positive measurable function on $\R^d \times \R^d$ such that 
\[
	\cP(u)(x) = \int \pi(y, x) u(y) dy
\]
defines a bounded Markov operator from $L^\infty(\R^d)$ to $L^\infty(\R^d)$. 
Assume that: 
\begin{enumerate}
	\item (A1) There exists $V \in L^\infty(\R^d)$ with $V \ge 0$,  constants $M \ge 0$ and $\gamma \in (0, 1)$ such that 
	\[
		(\cP V)(x) \le \gamma V(x) + M, 
	\]
	for all $x \in X$. 
	\item (A2) There exists a constant $\alpha_0 \in (0, 1)$ and a probability density $g_0$ so that 
	\[
		\inf_{x : \, V(x) \le R} \pi(x, \cdot) \ge \alpha_0 g_0(\cdot), 
	\]
	where  $R > 2 M/(1-\gamma)$. 
\end{enumerate}

Given $\beta>0$, let us consider the norms
\begin{equation}
	\label{eq:norms}
	\|\varphi\|_{\beta V} = \sup_x \frac{|\varphi(x)|}{1 + \beta V(x)}, \quad 
	\|\varphi\|_{\beta V, *} = \inf_{C \in \R} \|\varphi + C\|_{\beta V}. 
\end{equation}
Choose the parameters as follows:
\begin{equation}
  \label{eq:hm-para}
  	\alpha_1 \in (0, \alpha_0), \,  \gamma_0 \in (\gamma+\frac{2M}{R}, 1), \, 
	\beta = \frac{\alpha_0}{M}, \,  
	\alpha = \max\left\{   1 - (\alpha_0 - \alpha_1), \,  \frac{2 + R \beta \gamma_0}{2 + R\beta} \right\}.
\end{equation}
\begin{theorem}[\cite{HM2011}]\label{thm:hm}
Suppose $\cP$ satisfies (A1) and (A2), and let $\beta, \alpha$ be chosen as described. Then 
\[
	\|\cP (u)\|_{\beta V, *} \le \alpha \|u\|_{\beta V, *}. 
\]
\end{theorem}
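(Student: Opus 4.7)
\emph{The plan is to reformulate the claim as a Kantorovich--Wasserstein contraction and then dualize.} Introduce on $\R^d$ the metric
\[
d_\beta(x, y) = \mathbf{1}_{x \ne y}(2 + \beta V(x) + \beta V(y)),
\]
and observe that the seminorm $\|u\|_{\beta V, *}$ coincides \emph{exactly} with the Lipschitz seminorm $\sup_{x \ne y} |u(x) - u(y)|/d_\beta(x, y)$. Indeed, for $L > 0$ an additive constant $C$ making $|u + C| \le L(1 + \beta V(\cdot))$ exists iff $\sup_x (u(x) - L(1 + \beta V(x))) \le \inf_y (u(y) + L(1 + \beta V(y)))$, which is exactly the Lipschitz condition $u(x) - u(y) \le L\, d_\beta(x, y)$. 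By Kantorovich--Rubinstein duality, the theorem then reduces to establishing
\[
W_{d_\beta}(\cP(x, \cdot), \cP(y, \cdot)) \le \alpha\, d_\beta(x, y) \quad \text{for all } x \ne y,
\]
where $\cP(x, \cdot)$ denotes the kernel $\pi(x, \cdot)$ viewed as a probability measure.

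Next I would prove this kernel-level contraction by a two-case coupling. In the \emph{drift-dominated regime} $V(x) + V(y) \ge R$, use the independent product coupling: $W_{d_\beta} \le 2 + \beta[\cP V(x) + \cP V(y)] \le 2 + \beta[\gamma(V(x) + V(y)) + 2M]$ by (A1). The hypothesis $\gamma_0 > \gamma + 2M/R$ allows the additive $2M$ to be absorbed into the drift (using $V(x) + V(y) \ge R$), yielding $W_{d_\beta} \le 2 + \beta\gamma_0(V(x) + V(y))$. The ratio to $d_\beta(x, y)$ is monotone in $V(x) + V(y)$, so the worst case occurs at $V(x) + V(y) = R$, producing precisely $(2 + R\beta\gamma_0)/(2 + R\beta)$, the second term defining $\alpha$.

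In the \emph{minorization-dominated regime} $V(x) + V(y) < R$, both $V(x), V(y) \le R$ so (A2) applies. Decompose $\pi(x, \cdot) = \alpha_0 g_0 + (1 - \alpha_0)\tilde\pi_x$ and similarly for $y$, and couple by sending both chains to a common point drawn from $g_0$ with probability $\alpha_0$ (where $d_\beta$ vanishes via the $\mathbf{1}_{x \ne y}$ factor), using the product of the residuals otherwise. Combined with $(1 - \alpha_0)\tilde\pi_x V \le \cP V(x)$ and the drift bound,
\[
W_{d_\beta} \le 2(1 - \alpha_0) + \beta[\gamma(V(x) + V(y)) + 2M],
\]
which after tuning via $\beta = \alpha_0/M$ and the buffer $\alpha_1 < \alpha_0$ is bounded by $\alpha\, d_\beta(x, y) \ge 2\alpha$, accounting for the first term $1 - (\alpha_0 - \alpha_1)$ in the definition of $\alpha$.

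The main obstacle is parameter-tuning: the same $\alpha, \beta, R$ must work in both regimes, forcing the precise relations in \eqref{eq:hm-para}. The strict inequality $\alpha_1 < \alpha_0$ ensures a strict contraction in the small-$V$ case (otherwise the estimate there would only give $\alpha \le 1$), while the choice $R > 2M/(1-\gamma)$ plays the same role in the large-$V$ case. Once the Wasserstein contraction is verified in both regimes, the exact seminorm identity from Step 1 transfers the contraction directly to $\|\cdot\|_{\beta V, *}$ with the same rate $\alpha$.
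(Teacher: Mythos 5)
The paper does not give its own proof of this theorem; it is cited directly from \cite{HM2011}. Your proposal correctly reconstructs the Hairer--Mattingly coupling argument: the identity between $\|\cdot\|_{\beta V, *}$ and the $d_\beta$-Lipschitz seminorm is right (and $d_\beta$ is a genuine metric since $V \ge 0$), the Kantorovich--Rubinstein reduction to a kernel-level Wasserstein contraction is the correct strategy, and your drift-dominated case closes cleanly.

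The minorization-dominated case, however, has a real gap. With $\beta = \alpha_0/M$ as you (and \eqref{eq:hm-para}) state, your bound gives, writing $s = V(x) + V(y)$,
\[
W_{d_\beta} \le 2(1 - \alpha_0) + \beta\bigl[\gamma s + 2M\bigr] = 2 + \beta\gamma s,
\]
and the ratio $\dfrac{2 + \beta\gamma s}{2 + \beta s}$ tends to $1$ as $s \to 0^+$, so there is \emph{no} uniform contraction in this regime. Your claim that this is ``bounded by $\alpha\, d_\beta$ after tuning via $\beta = \alpha_0/M$ and the buffer $\alpha_1 < \alpha_0$'' is not substantiated --- $\alpha_1$ never actually enters your computation, and the first term in the definition of $\alpha$ is never reached. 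The correct choice is $\beta = \alpha_1/M$ with $\alpha_1 \in (0, \alpha_0)$: then $2\beta M = 2\alpha_1$ and the same coupling bound becomes
\[
W_{d_\beta} \le 2\bigl(1 - (\alpha_0 - \alpha_1)\bigr) + \beta\gamma s,
\]
whose ratio to $d_\beta = 2 + \beta s$ is a monotone function of $s$ with its maximum over $[0, R]$ attained at an endpoint, yielding the bound $\max\{1 - (\alpha_0 - \alpha_1),\, \tfrac{2 + R\beta\gamma_0}{2 + R\beta}\} = \alpha < 1$. (The $\beta = \alpha_0/M$ in \eqref{eq:hm-para} is almost certainly a typo: later, Corollary~\ref{cor:contraction} sets $\beta = \alpha_1/(C\nu)$ while Proposition~\ref{prop:pi-hm} gives $M \asymp \nu$, which is consistent with $\beta = \alpha_1/M$, not $\alpha_0/M$.) Your write-up should make explicit where $\alpha_1$ enters; as stated the small-$V$ case only proves non-expansion.
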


\subsection{Choice of Lyapunov functions for the Markov operators}

\begin{proposition}\label{prop:pi-hm}
For $D > 1$ and $\nu_0 > 0$, let $\pi_\nu(y, x)$, $\nu \in (0, \nu_0)$ be a family of a periodic Markov kernel on $\R^d \times \R^d$ such that
\[
\frac{1}{D\chi_\nu(x)} \le \frac{\pi_\nu(y, x)}{\tlK_\nu(y, x)} \le D \chi_\nu(y). 
\]
Let $\cP_\nu$ denote the Markov operator with kernel $\pi_\nu(y, x)$. 

Then there exists $C > 1$, $\alpha_0 \in (0, 1)$, $\nu_1 > 0$,  $R > 0$, and a family of probability density $(g_\nu)_{\nu \in (0, \nu_1)}$ on $\R^d$ depending only on $F$ and the constant $D$,  such that the following hold: 
\begin{enumerate}
	\item Let $V(x) = \psi(x) \chi^2_\nu(x)$, we have 
	\[
	(\cP_\nu V)(x) \le \gamma V(x) + C \nu. 
	\]
	\item $R > 2C/(1 - \gamma)$ and 
	\[
	\inf_{V(x) \le R\nu} \pi_\nu(x, y) \ge \alpha_0 g_\nu(y). 
	\]
\end{enumerate}
\end{proposition}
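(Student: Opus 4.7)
The plan is to verify the Hairer--Mattingly drift bound (1) and minorization (2) separately, leveraging the Laplace-type concentration of $\tlK_\nu(\cdot,x)$ near the minimizer $\bary(x)\in\cD^-(\psi)\subset U$. Throughout I will use the sandwich $\pi_\nu\le D\chi_\nu(y)\tlK_\nu$ together with Corollary~\ref{cor:tlh-lower} to show that contributions from $y\in\T^d\setminus U$ are exponentially small in $\nu^{-1}$, confining the real analysis to $y\in U$.

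For the drift with $x\in U$ we have $V(x)=\psi(x)$, and Taylor expansion of $\psi(y)$ around $\bary(x)$ via Lemma~\ref{lem:D-2nd} gives
\[
\cP_\nu V(x) \le \psi(\bary(x)) + |\nabla\psi(\bary(x))|\int\pi_\nu|y-\bary(x)|\,dy + C\int\pi_\nu|y-\bary(x)|^2\,dy + (\text{exp.\ small}).
\]
The quadratic moment is $O(\nu)$: using the upper sandwich, $\int\pi_\nu|y-\bary(x)|^2\,dy \le D\int\chi_\nu(y)\tlK_\nu(y,x)|y-\bary(x)|^2\,dy$, and on $U$ the quadratic lower bound $\tlh(y,x)\ge\delta|y-\bary(x)|^2$ of Proposition~\ref{prop:U-non-deg}(2) together with the Gaussian rescaling $z=(y-\bary(x))/\sqrt{\nu}$ produces the $O(\nu)$ estimate; on $\T^d\setminus U$ the contribution is exponentially small. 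Cauchy--Schwarz then gives the linear moment $O(\sqrt\nu)$. Using $\psi(\bary(x))\le\kappa^2\psi(x)$ (Proposition~\ref{prop:hyp}(4)) and $|\nabla\psi(\bary(x))|\le 2C\sqrt{\psi(\bary(x))}\le 2C\kappa\sqrt{\psi(x)}$, which follows from the Lipschitz property of $\sqrt\psi$ in Proposition~\ref{prop:hyp}(2), then applying Young's inequality $2\sqrt{ab}\le\epsilon a+b/\epsilon$ with $\epsilon$ so small that $\gamma := \kappa^2+C\kappa\epsilon<1$, I conclude $\cP_\nu V(x)\le\gamma V(x)+C'\nu$. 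When $x\notin U$, $V(x)=\psi(x)\nu^{-d}\ge c\nu^{-d}$ since $\psi$ vanishes only at $0\in U$; using the sandwich and that $\pi_\nu$ is a probability measure, $\int_U\pi_\nu\psi\le\|\psi\|_\infty$ and $\nu^{-d}\int_{\T^d\setminus U}\pi_\nu\psi=o(\nu)$, so $\cP_\nu V(x)\le C_1$ uniformly. Since $\gamma V(x)\ge\gamma c\nu^{-d}$ dominates any fixed constant for $\nu$ small, the drift holds automatically in this regime.

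For the minorization, the condition $V(x)\le R\nu$ forces $x\in U$ (otherwise $V(x)\ge c\nu^{-d}\gg R\nu$) and $\psi(x)\le R\nu$, so by Proposition~\ref{prop:hyp}(2), $|x|_\T\le C\sqrt{R\nu}$. For such $x$, the lower sandwich $\pi_\nu(y,x)\ge\tlK_\nu(y,x)/D$ (since $\chi_\nu(x)=1$) combined with the explicit quadratic upper bound $\tlh(y,x)\le C(R\nu+|y|^2)$ near $0$, valid by the $C^3$ expansion of $\psi$ and $F$ there, yields $\pi_\nu(y,x)\ge (4\pi\nu)^{-d/2}e^{-CR/2}e^{-C|y|^2/(2\nu)}/D$, with an exponent independent of $x$. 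Taking $g_\nu$ to be a normalized Gaussian of scale $\sqrt\nu$ centered at $0$ produces $\pi_\nu(y,x)\ge\alpha_0 g_\nu(y)$ with $\alpha_0=\alpha_0(R,D)>0$; choosing $R>2C/(1-\gamma)$ after $\gamma,C$ are fixed meets the Hairer--Mattingly constraint.

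The main obstacle is securing the quadratic moment estimate $\int\pi_\nu|y-\bary(x)|^2\,dy=O(\nu)$ uniformly in $x\in U$ and $\nu$, which rests precisely on the uniform non-degeneracy of Proposition~\ref{prop:U-non-deg}(2). A secondary delicacy is the asymmetry of the sandwich: the $\chi_\nu(y)=\nu^{-d/2}$ blow-up on $\T^d\setminus U$ is compensated by the $e^{-\delta/(2\nu)}$ decay of $\tlK_\nu$ off $U$ from Corollary~\ref{cor:tlh-lower}, while the normalization $\chi_\nu(x)=1$ on $U$ is crucial for the uniform minorization.
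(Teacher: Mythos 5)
Your proposal is correct and follows essentially the same structure as the paper's proof: the $U$/$U^c$ decomposition, the Gaussian moment $O(\nu)$ from the uniform non-degeneracy in Proposition~\ref{prop:U-non-deg}(2), a Young-type inequality to absorb the cross term (the paper expands $\bigl(\sqrt{\psi}(\bary)+C|y-\bary|\bigr)^2$ directly rather than Taylor plus Cauchy--Schwarz, but the algebra is equivalent), and the Gaussian minorization for $|x|\lesssim\sqrt{R\nu}$. Two small remarks. First, in the case $x\notin U$ you bound $\int_U\pi_\nu\psi$ directly by $\|\psi\|_\infty$ using only that $\pi_\nu$ is Markov, whereas the paper invokes Proposition~\ref{prop:laplace} to control $\int_U\tlK_\nu\le\tL_\nu\bOne(x)$; your route is a slight simplification and either gives a bound trivially absorbed by $\gamma V(x)\gtrsim\nu^{-d}$. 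Second, in item (2) the quadratic upper bound $\tlh(y,x)\le C\bigl(R\nu+|y|^2\bigr)$ must hold for \emph{all} $y\in\R^d$, not just near $0$, for the minorization $\pi_\nu(\cdot,x)\ge\alpha_0 g_\nu$ to hold pointwise; your justification ``by $C^3$ expansion near $0$'' only gives the local version. The global bound does hold, but it comes from semi-concavity---the second half of Lemma~\ref{lem:glob-lower-bound} applied with $\tlh(y,x)=h_\psi(y,x)-h_\psi(\bary(x),x)$ gives $\tlh(y,x)\le C|y-\bary(x)|^2$ for all $y$, from which your bound follows since $|\bary(x)|\lesssim\sqrt{R\nu}$---and this is the estimate the paper uses.
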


We now apply Proposition~\ref{prop:pi-hm} to the kernels $\pi^{(n)}(y, x)$ defined in \eqref{eq:markov}. 
\begin{corollary} \label{cor:contraction}
Suppose $\nu_0 > 0$, $N = N(\nu) \in \N$ where $\nu \in (0, \nu_0)$,  $D>1$ and $Q_n > 0$ are chosen such that the following hold:
\begin{equation}
  \label{eq:Qn-growth}
  	D^{-3} \le Q_n / Q_{n+1} \le D^3, \quad \text{ for all } 0 \le n \le N(\nu), 
\end{equation}
and  
\begin{equation}
  \label{eq:LnOne-bound}
  	D^{-1} \le \frac{\tL_\nu^n (\bOne)(x)}{Q_n} \le D \chi_\nu(x),  \quad \text{for all} \quad 0 \le n \le N(\nu), \, 0 < \nu < \nu_0.
\end{equation}
Then exists $C > 1$, $\alpha, \alpha_1 \in (0, 1)$, $\nu_1 > 0$ depending only $F$, $D$, such that for each $\nu \in (0, \nu_1)$ and $0 \le n \le N(\nu)$, we have 
\[
	\|\cP_\nu^{(n)} u\|_{\beta V, *} \le \alpha \|u\|_{\beta V, *}, 
\]
where $\beta = \frac{\alpha_1}{C\nu}$ and $V(x) = \psi(x) \chi_\nu^2(x)$.
\end{corollary}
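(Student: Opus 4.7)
The plan is to apply the Hairer--Mattingley Lyapunov framework by first recognizing that each Markov kernel $\pi_\nu^{(n)}$ falls within the hypothesis of Proposition~\ref{prop:pi-hm}, and then invoking Theorem~\ref{thm:hm} while carefully tracking the dependence on $\nu$ of every parameter.

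I would begin by verifying the pointwise sandwich condition for $\pi_\nu^{(n)}$. From \eqref{eq:markov} one has
\[
\frac{\pi_\nu^{(n)}(y,x)}{\tlK_\nu(y,x)} = \frac{\tL_\nu^{n-1}\bOne(y)}{\tL_\nu^n\bOne(x)}
\]
for $n \ge 1$, with the analogous identity using $\tL_\nu^0\bOne \equiv 1$ for $n = 0$. Feeding the two-sided bound \eqref{eq:LnOne-bound} into numerator and denominator and using \eqref{eq:Qn-growth} on $Q_{n-1}/Q_n$, one obtains
\[
\frac{1}{D^5\, \chi_\nu(x)} \le \frac{\pi_\nu^{(n)}(y,x)}{\tlK_\nu(y,x)} \le D^5\, \chi_\nu(y)
\]
for every $0 \le n \le N(\nu)$ and every admissible $\nu$. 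Proposition~\ref{prop:pi-hm} then applies uniformly with comparison constant $D^5$, yielding constants $C, \alpha_0, \gamma \in (0, 1)$ and $R > 2C/(1-\gamma)$, together with probability densities $g_\nu$, all depending only on $F$ and $D$, for which $V = \psi\chi_\nu^2$ is a Lyapunov function satisfying $\cP_\nu^{(n)} V \le \gamma V + C\nu$ and the minorization $\inf_{V(x)\le R\nu} \pi_\nu^{(n)}(x,\cdot) \ge \alpha_0 g_\nu(\cdot)$, uniformly in $n$ and $\nu$.

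Next I would apply Theorem~\ref{thm:hm} with the identifications $M = C\nu$ and $R_{\mathrm{HM}} = R\nu$. The key structural observation is that the ratios controlling the Hairer--Mattingley parameters \eqref{eq:hm-para} are $\nu$-free: $M/R_{\mathrm{HM}} = C/R$, and setting $\beta = \alpha_1/(C\nu)$ yields $R_{\mathrm{HM}}\beta = R\alpha_1/C$. Consequently the threshold condition $R_{\mathrm{HM}} > 2M/(1-\gamma)$ reduces to $R > 2C/(1-\gamma)$, which is already part of the output of Proposition~\ref{prop:pi-hm}, and the resulting contraction rate $\alpha = \max\bigl\{1 - (\alpha_0 - \alpha_1),\ (2 + R_{\mathrm{HM}}\beta\gamma_0)/(2 + R_{\mathrm{HM}}\beta)\bigr\}$ is independent of both $n$ and $\nu$. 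Theorem~\ref{thm:hm} then delivers the claimed one-step contraction, uniformly in the admissible range.

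The substantive analytical content is concentrated in Proposition~\ref{prop:pi-hm}; at the level of the corollary the work is essentially bookkeeping, and the only point that genuinely requires care is confirming that no residual $\nu$-dependence leaks into the Hairer--Mattingley constants through $\beta$. That the scaling $\beta \sim 1/\nu$ comes out naturally is in fact reassuring: it matches the exponential weight $e^{-\psi/(2\nu)}$ built into the conjugation \eqref{eq:conj-L} between $\cL_\nu$ and $\tL_\nu$, so the weighted norm $\|\cdot\|_{\beta V, *}$ lies on exactly the right scale to propagate through the bootstrap of Step~4.
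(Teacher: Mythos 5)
Your proposal is correct and follows the same route as the paper's own proof: reduce to the pointwise sandwich hypothesis of Proposition~\ref{prop:pi-hm} via the ratio identity from \eqref{eq:markov}, then invoke the Hairer--Mattingley theorem with the parameter choices \eqref{eq:hm-para}. You are in fact slightly more careful than the printed proof: you correctly compute the ratio as $\tL_\nu^{n-1}\bOne(y)/\tL_\nu^n\bOne(x)$ (the paper's proof swaps the indices in a typo) and you correctly obtain the comparison constant $D^5$ from combining \eqref{eq:LnOne-bound} twice with \eqref{eq:Qn-growth} once (the paper states $D^4$), though this affects only the unspecified constants and not the conclusion.
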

\begin{proof}[Proof of Corollary~\ref{cor:contraction}]
For each $0 \le n \le N(\nu)$, we have $\pi_\nu^{(n)}(y, x)/\tlK_\nu(y, x) = \tL_\nu^n\bOne(y)/\tL_\nu^{n-1}\bOne(x)$. It follows from \eqref{eq:Qn-growth} and \eqref{eq:LnOne-bound} that
\[
\frac{1}{D^4\chi_\nu(x)} \le \frac{\pi_\nu(y, x)}{\tlK_\nu(y, x)} \le D^4 \chi_\nu(y), 
\]
hence Proposition~\ref{prop:pi-hm} applies with $D$ replaced with $D^4$. We choose parameters according to \eqref{eq:hm-para} and the statement follows. 
\end{proof}

\begin{proof}[Proof of Proposition~\ref{prop:pi-hm}]

\textbf{Item (1), case 1}: $x \in U$. In this case we have 
\[
	(\cP V)(x) = \int \pi_\nu(y, x) V(y) dy \le D \int \tlK(y, x) V(y) dy. 
\]
By Proposition~\ref{prop:hyp}, $\sqrt{\psi}$ is a $C$-Lipschitz function. Then for any $\epsilon > 0$, and $x \in U$, we have 
\[
	\begin{aligned}
	{\psi}(y) & \le \left( \sqrt{\psi}(\bary(x)) + C|y -\bary(x)| \right)^2 \\
	& \le \psi(\bary(x)) + C^2 |y - \bary(x)|^2 + 2 C \sqrt{\psi}(\bary(x)) |y -\bary(x)| \\
	& \le (1 + \epsilon^2) \psi(\bary(x)) + C^2(1 + \epsilon^{-2}) |y - \bary(x)|^2 \\
	& \le (1 + \epsilon^2) \kappa^2 \psi(x) + C^2(1 + \epsilon^{-2}) |y - \bary(x)|^2  \\
	& \le \gamma \psi(x) + C_1 |y - \bary(x)|^2,
	\end{aligned}
\]
where we have set $\gamma = (1 + \epsilon^2) \kappa^2$, $C_1 = C^2(1 + \epsilon^{-2})$. We then choose $\epsilon$ such that $\gamma \in (0, 1)$. 

Write $\bary = \bary(x)$ for short, and let $r > 0$ be such that $B_r(\bary) \subset U$, then
\[
	(\cP V)(x) \le \int_{B_r(\bary)} \pi_\nu(y, x) V(y) dy +  D \int_{\R^d \setminus B_r(\bary)}  \tlK(y, x) V(y) dy.  
\]
We have 
\[
	\begin{aligned}
	& \int_{B_r(\bary)} \pi_\nu(y, x) V(y) dy = \int_{B_r(\bary)} \pi_\nu(y, x) \psi(y) dy\\
	& \le C_1D \int_{B_r(\bary)} \tlK_\nu(y, x) |y - \bary|^2 dy + \gamma \psi(x) \int_{B_r(\bary)} \pi_\nu(x, y) dy \\
	& = C_1D (4\pi)^{-\frac{d}{2}} \nu^{-\frac{d}{2}} \int_{B_r(\bary)} e^{-\frac{1}{2\nu}\tilde{h}(y, x)} |y - \bary|^2 dy
	+ \gamma \psi(x). 
	\end{aligned}
\]
By Corollary~\ref{cor:tlh-lower}, 
\[
	\begin{aligned}
	& \nu^{-\frac{d}{2}} \int_{B_r(\bary)} e^{-\frac{1}{2\nu}\tilde{h}(y, x)} |y - \bary|^2 dy
	\le \nu^{-\frac{d}{2}} \int_{B_r(\bary)} e^{-\frac{1}{2C\nu}|y- \bary|^2} |y - \bary|^2 dy \\
	& = \nu^{-\frac{d}{2}} \int_{B_r(0)} e^{-\frac{1}{2C\nu}|v|^2} |v|^2 dv 
	= \nu \int_{B_{r/\sqrt{\nu}}} e^{-\frac{1}{C}|v|^2} |v|^2 dv \le C_2 \nu, 
	\end{aligned}
\]
where $C_2 = \int_{\R^d} e^{-\frac{1}{C}|v|^2} |v|^2 dv$. 

On the other hand, since $V(x) \le C \chi_\nu^2(x) \le C \nu^{-d}$, we have 
\[
	\begin{aligned}
	& \int_{\R^d \setminus B_r(\bary)} \tlK_\nu(y, x) V(y) dy \\
	& \le (4\pi)^{\frac{d}{2}} C \nu^{-\frac{3d}{2}} 
	\int_{\R^d \setminus B_r(\bary)} e^{-\frac{1}{2\nu} \tlh(y, x)} dy 
	\le C_3 \nu^{-\frac{3d}{2}} \int_{\R^d \setminus B_r(\bary)} e^{-\frac{1}{2C\nu} |y - \bary|^2} dy \\
	& =  C_3 \nu^{-\frac{3d}{2}} \int_{|v| \ge r} e^{-\frac{1}{2C\nu}|v|^2} dv 
	\le C_3 \nu^{-\frac{3d}{2}} e^{-(\frac{1}{2C\nu} - 1)r^2} \int_{|v| \ge r} e^{-\frac{1}{2C}|v|^2} dv \\
	& \le C_3 e^{r^2} \nu^{-\frac{3d}{2}} e^{-\frac{1}{2C\nu}r^2}.  
	\end{aligned}
\]
We now choose $\nu_1$ sufficiently small such that $C_3 e^{r^2} \nu^{-\frac{3d}{2}} e^{-\frac{1}{2C\nu}r^2} < \nu$. 

Combine all the estimate, for $x \in U$, there is $C_4 > 1$ depending on $F$ and $D$ such that 
\begin{equation}
  \label{eq:pv-case1}
  	(\cP V)(x) \le C_4 \nu + \gamma \psi(x)  = C_4 \nu + \gamma V(x). 
\end{equation}

\textbf{Item (1), case 2}: $x \notin U$.  Then 
\[
	\int \pi_\nu(y, x) V(y) dy \le D \int \tlK_\nu(y, x) \psi(y) \chi_\nu^3(y) dy. 
\]
On one hand, 
\[
	\begin{aligned}
	\int_U \tlK_\nu(y, x) \psi(y) \chi_\nu^3(y) dy 
	& = \int_U \tlK_\nu(y, x) \psi (y) dy 
	\le C \int_U \tlK_\nu(y, x) dy \\
	& \le C \tL_\nu \bOne(x)  \le C \nu^{-\frac{d}{2}}, 
	\end{aligned}
\]
where we used Proposition~\ref{prop:laplace}. On the other hand, 
\[
	\begin{aligned}
	& \int_{U^c} \tlK_\nu(y, x) \psi(y) \chi_\nu^3(y) dy 
	\le C \nu^{-2d} \int_{U^c} e^{-\frac{1}{2\nu} \tlh(y, x)} dy \\
	& \le  C \nu^{-2d} e^{-(\frac{1}{2\nu}- 1)} \int_{U^c} e^{- \tlh(y, x)} dy 
	\le C \nu^{-2d} e^{-\frac{1}{2\nu}}. 
	\end{aligned}
\]
By choosing $\nu_1$ small enough, we can ensure $C \nu^{-2d} e^{-\frac{1}{2\nu}} < C\lambda \nu^{-\frac{d}{2}}$. Therefore for $x \notin U$, we have 
\[
	(\cP V)(x) \le 2 C \nu^{-\frac{d}{2}}\leq 2C^2\nu^{d/2}\cdot V(x). 
\]
By choosing $\nu_1$ again, we can ensure $(2 C^2 \nu^{\frac{d}{2}}) < \gamma$. 

Combine the two cases, we have proved item (1).

\textbf{Item (2): } Note that 
\[
	\pi_\nu(y, x) \ge \frac{1}{D\chi_\nu(x)} \tlK_\nu(y, x) = D^{-1} \tlK_\nu(y, x). 
\]
Since $\pi_\nu$ is periodic, it suffices to prove (2) for $x \in [-\frac12, \frac12)^d$. Note that there exists $C > 1$  such that $V(x) \ge \psi(x) \ge C^{-1} |x|_\T^2$. It follows that for $R \nu_0$ sufficiently small, for all $\nu \in (0, \nu_0)$, 
\[
	\{x \st V(x) < R\nu\} \subset \{x \st |x|_{\T^2} < C \sqrt{R\nu}\} = \{x \st |x| < C \sqrt{R\nu}\} = B_{C\sqrt{R\nu}} \subset U.  
\]
Proposition~\ref{prop:hyp} implies $\bary(x) \in B_{C\sqrt{R\nu}}$ for all $x \in B_{C\sqrt{R\nu}}$. By Corollary~\ref{cor:tlh-lower}, there exists $C > 1$ such that 
\[
\tlh(y, x) \le C|y - \bary(x)|^2, \quad y \in \R^d. 
\]
It follows that 
\[
\begin{aligned}
 	\inf_{V(x) < R\nu} \pi_\nu(y, x) & 	\ge D^{-1} (4\pi \nu)^{-d/2} \inf_{|x| < C\sqrt{R\nu}} e^{-\frac{C}{2\nu}|y - \bary(x)|^2}  \\
 	& \ge D^{-1} (4\pi \nu)^{-d/2} \inf_{|z| < C\sqrt{R\nu}} e^{-\frac{C}{2\nu}|y - z|^2} := G_\nu(y). 
\end{aligned}
\]
Set 
\[
\alpha_\nu = \int G_\nu(y) dy, \quad g_\nu(y) = \alpha^{-1}_\nu G_\nu, 
\]
then $g_\nu$ is a probability density satisfying (2). It suffices to prove that there exists $\alpha_0 \in (0, 1)$ such that $\alpha_\nu \ge \alpha_0$. 

Indeed, 
\[
\begin{aligned}
\int G_\nu(y) dy & = (4\pi \nu)^{-d/2} \int \inf_{|z| < C\sqrt{R\nu}} e^{-\frac{C}{4\nu}} |y - z|^2 \\
& = (4\pi)^{-d/2} \int \inf_{|\zeta| < C \sqrt{R}} e^{- \frac{C}{2} |y - \zeta|^2} := \alpha_0
\end{aligned}
\]
is independent of $\nu$. Moreover, since $C > 1$, 
\[
\alpha_0 \le (4\pi)^{-d/2} \int e^{-\frac{1}{2}|y - x|^2} dy < 1. 
\]

To choose parameters, we only need to choose $R$ sufficiently large such that $R > 2C_4/(1 - \gamma)$ ($C_4$ is from \eqref{eq:pv-case1}) , then choose $\alpha$ and $\nu_0$ depending on $R$. 
\end{proof}

\section{Bootstrap argument}
\label{sec:boostrap}

By Proposition~\ref{prop:laplace}, there exists $C > 1$, $\nu_0 > 0$ and $Q_n > 0$ satisfying
\begin{equation}
  \label{eq:Cn-growth}
  	C^{-1} \le \frac{Q_{n+1}}{Q_n} \le C, 
\end{equation}
such that for $n \le N_1(\nu) = C^{-1} (\nu \log \frac{1}{\nu})^{-\frac13}$ and $\nu\in (0,v_0)$, we have
\begin{equation}
	\label{eq:Ln-one}
	C^{-1} \le \frac{\tL_\nu^n \bOne(x)}{Q_n} \le C \chi_\nu(x). 
\end{equation}
This estimate allows us to apply Corollary~\ref{cor:contraction} up to $n = N_1(\nu)$. Using this corollary, we would like to bootstrap the estimate \eqref{eq:Ln-one} to arbitrary $n$. More precisely, we will prove the following:

\begin{theorem}
\label{thm:bootstrap}
There exist constants $M > 0$ and $\nu_0> 0$, and $Q_n > 0$ depending only on $F$, such that
\[
	M^{-3} \le \frac{Q_{n+1}}{Q_n} \le M^3, 
\]
and
\[
	M^{-1} \le \frac{\tL_\nu^n \bOne(x)}{Q_n} \le M \chi_\nu(x)
\] 
hold for any $n \in \N$ and $\nu \in (0, \nu_0)$. 
\end{theorem}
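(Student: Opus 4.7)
The plan is a dyadic bootstrap. Proposition~\ref{prop:laplace} provides the base case: \eqref{eq:Qn-growth}--\eqref{eq:LnOne-bound} hold on $[0, N_1(\nu)]$ with some constant $M_0$. We iteratively double the length of the interval on which the bounds hold, while controlling the total deterioration of the constant by a factor at most~2, so that the final $M = 2M_0$ suffices.

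\textbf{Inductive step.} Fix $D := 2M_0$ once and for all, so that the constants $\alpha < 1$, $\alpha_1$, $C>1$, and hence $\beta = \alpha_1/(C\nu)$ and $V = \psi\chi_\nu^2$, provided by Corollary~\ref{cor:contraction} (with hypothesis constant $D$) are themselves fixed throughout the argument. Suppose inductively that the bounds hold on $[0, K]$ with constant $\le D$, so Corollary~\ref{cor:contraction} gives $\|\cP_\nu^{(k)} u\|_{\beta V, *} \le \alpha \|u\|_{\beta V, *}$ for all $k \le K$. For $n \in (K, 2K]$, write $n = K + m$ and apply \eqref{eq:telescope} with $u = \tL_\nu^K \bOne$:
\[
  \tL_\nu^{K+m}\bOne(x) = \tL_\nu^m\bOne(x)\cdot \cP_\nu^m(\tL_\nu^K \bOne)(x).
\]
Composing the contraction $m$ times yields a constant $c_{K,m}$ with
\[
  |\cP_\nu^m(\tL_\nu^K\bOne)(x) - c_{K,m}| \le \alpha^m \|\tL_\nu^K\bOne\|_{\beta V, *}(1 + \beta V(x)).
\]
A direct check using the inductive bounds gives $\|\tL_\nu^K\bOne\|_{\beta V, *} \le C D Q_K$, while Markov positivity applied to $\tL_\nu^K\bOne \ge D^{-1} Q_K$ yields $c_{K,m} \ge D^{-1} Q_K$. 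Setting $Q_{K+m} := Q_m\, c_{K,m}$ and using the crude uniform bound $1 + \beta V(x) \le C\nu^{-d-1}$ on $\R^d$, we obtain
\[
  \frac{\tL_\nu^{K+m}\bOne(x)}{Q_{K+m}} = \frac{\tL_\nu^m\bOne(x)}{Q_m}\,(1 + r(x)), \qquad |r(x)| \le CD^2\alpha^m\nu^{-d-1}.
\]
Choosing $m = K$ and noting that $\tL_\nu^m\bOne/Q_m$ obeys the inductive bounds with constant $\le D$, we extend the bounds to $[0, 2K]$ with constant multiplied by $(1 + \epsilon(K))$, where $\epsilon(K) := CD^2\alpha^K\nu^{-d-1}$.

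\textbf{Summability via doubling.} Set $N_j := 2^j N_1(\nu)$. The $j$-th iteration produces a multiplicative error $\epsilon(N_{j-1}) = CD^2 \alpha^{2^{j-1} N_1(\nu)} \nu^{-d-1}$. Since $N_1(\nu) \sim (\nu\log(1/\nu))^{-1/3}$, the quantity $\alpha^{N_1(\nu)}$ is super-polynomially small in $\nu$; hence $\alpha^{N_{j-1}}$ decays doubly-exponentially in $j$, and $\sum_{j \ge 1} \epsilon(N_{j-1})$ can be made smaller than $\log(3/2)$ by taking $\nu \le \nu_0$ with $\nu_0$ sufficiently small. The compounded constant is thus at most $(3/2)M_0 < D = 2M_0$, closing the induction and proving the bound \eqref{eq:LnOne-bound} with $M = 2M_0$ for every $n \ge 0$. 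The ratio bound $M^{-3} \le Q_{n+1}/Q_n \le M^3$ follows by combining the base-case ratio bound with the estimate $|c_{K,m+1}/c_{K,m} - 1| = O(\epsilon(K))$ at dyadic boundaries.

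\textbf{Main obstacle.} The decisive issue is ensuring that the inductive constants do not blow up across the infinitely many bootstrap steps. A scheme with fixed-size steps would give errors $\epsilon(K)$ of constant order in $j$ and the product $\prod(1+\epsilon_j)$ would diverge. The dyadic doubling is essential: it forces $\alpha^K$ to decay doubly-exponentially in the iteration index, which overwhelms the polynomial $\nu^{-d-1}$ blow-up of $1 + \beta V$ that is inherent to the Hairer--Mattingly weighted norm. A secondary delicate point is the two-sided control of $c_{K,m}$: the lower bound $c_{K,m} \ge D^{-1} Q_K$ from Markov positivity and the upper control on the deviation from the $\beta V, *$ contraction must both remain consistent with the a priori fixed $D = 2M_0$ used to invoke Corollary~\ref{cor:contraction}, which is why fixing $D$ before running the induction, rather than letting it grow with $j$, is crucial.
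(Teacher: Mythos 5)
Your proposal differs from the paper's proof in a way that matters, and your diagnosis of the ``main obstacle'' is based on a misreading of the fixed-step alternative. The paper factors the partition function as $\tL_\nu^n \bOne = (\tL_\nu^N \bOne)\,\cP_\nu^N(\tL_\nu^{n-N}\bOne)$ with the step $N = N_1(\nu)$ held \emph{fixed} throughout the induction; you factor $\tL_\nu^{K+m}\bOne = (\tL_\nu^m\bOne)\,\cP_\nu^m(\tL_\nu^K\bOne)$ with a growing interval. In the paper's form, the multiplicative factor $\tL_\nu^N\bOne/Q_N$ is always controlled by the \emph{base-case} constant $C$ of Proposition~\ref{prop:laplace}, and the inductive hypothesis enters only through the argument of $\cP_\nu^N$; after $N$ contractions, the fluctuation of $\cP_\nu^N(\tL_\nu^{n-N}\bOne/R_{n-N})$ about its minimum is a \emph{fixed} small quantity $2CC_1M^2\nu^{-3d/2-1}\alpha^N < 1$, the same at every inductive step, so the bound is $M=2C$ uniformly with no compounding. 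Your claim that a fixed-step scheme must produce a divergent product $\prod(1+\epsilon_j)$ is therefore false: it is true only for \emph{your} factorization, where the inductive bound passes through the multiplicative factor rather than being laundered through the contraction. The dyadic doubling is a workaround for a problem the paper never creates.

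More concretely, your inductive step has a genuine gap. You write $n = K + m$ for $n\in(K,2K]$ and then set $m = K$, which covers only the endpoint $n = 2K$. For $n$ near $K$, say $n = K+1$, you would be composing only $\cP_\nu^1$, and $\alpha^1$ does not dominate the polynomial blow-up $\nu^{-3d/2-1}$. (Your error estimate $|r(x)|\le CD^2\alpha^m\nu^{-d-1}$ is also missing a factor $\nu^{-d/2}$ coming from $\|\tL_\nu^K\bOne\|_* \le DQ_K\nu^{-d/2}$, not $CDQ_K$.) To repair this within your framework you would have to factor $\tL_\nu^n\bOne = (\tL_\nu^K\bOne)\,\cP_\nu^K(\tL_\nu^{n-K}\bOne)$ so the contraction exponent is always $K$; but then the multiplicative factor $\tL_\nu^K\bOne/Q_K$ carries the inductive constant, reintroducing exactly the compounding your dyadic scheme was built to control, and you would also need to check consistency of the redefined $Q_n$ across dyadic levels for the ratio bound. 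The cleaner resolution is the paper's: keep a fixed step $N = N_1(\nu)$, always use $\tL_\nu^N\bOne$ as the multiplicative factor, and let the inductive information flow only through the $\alpha^N$-contracted term.
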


We first show that Theorem~\ref{thm:bootstrap} implies Theorem~\ref{thm:exp-conv-heat}, hence our main theorem. 

By Theorem~\ref{thm:bootstrap}, Corollary~\ref{cor:contraction} applies to all $n \in \N$ with $D = M$. We conclude that there exists $C > 1$, $\alpha,\alpha_1 \in (0, 1)$ depending on $F$ and $M$, such that for $\beta =\alpha_1 (C\nu)^{-1}$ and all $n \in \N$,
\begin{equation}
	\label{eq:P-contraction}
	\|\cP^{(n) }\|_{\beta V, *} \le \alpha^n \|u\|_{\beta V, *}. 
\end{equation}
We state two lemmas on norm estimates. 

\begin{lemma}
\label{lem:beta_estimate}
Let $\beta = \alpha_1/(D_1\nu)$ for $\alpha_1 \in (0, 1)$ and $V = \psi \chi_\nu^2$. Then there exists $C > 1$ depending only on $F$ such that:
\begin{equation}
	\label{eq:7}
	\normbetastar{u}\leq \normstar{u}.
\end{equation}
and
\begin{equation}
	\label{eq:9}
	\normstar{u}\leq C D_1 \nu^{-d-1} \normbetastar{u}
\end{equation}
\end{lemma}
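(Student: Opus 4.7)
The plan is to deduce both inequalities directly from the definitions of the two norms, using only the elementary observations that $V \geq 0$ and that $V$ admits a uniform sup bound of order $\nu^{-d}$. The lemma is a bookkeeping step translating between the weighted norm (in which Corollary~\ref{cor:contraction} supplies exponential contraction) and the sup-modulo-constants norm (in which Theorem~\ref{thm:exp-conv-heat} is formulated), so I expect no real obstacle; the work is essentially accounting for constants.

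For \eqref{eq:7}, I first note that $V = \psi\chi_\nu^2 \geq 0$, since $\psi \geq 0$ by Proposition~\ref{prop:hyp}. Hence $1+\beta V(x) \geq 1$ for every $x$, and for any constant $C \in \R$ the pointwise inequality $|u(x)+C|/(1+\beta V(x)) \leq |u(x)+C|$ is immediate and gives $\norm{u+C}_{\beta V} \leq \sup_x |u(x)+C|$. Taking the infimum over $C$ on both sides yields $\normbetastar{u} \leq \normstar{u}$.

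For \eqref{eq:9}, the key step is the uniform estimate $\norm{V}_{C^0} \leq \norm{\psi}_{C^0}\,\nu^{-d}$, which holds because $\psi$ is continuous and $\Z^d$-periodic (hence bounded by a constant depending only on $F$) and because $\chi_\nu \leq \nu^{-d/2}$ by the definition \eqref{eq:chi-nu}. Substituting $\beta = \alpha_1/(D_1\nu)$ then produces a pointwise bound of the form
\[
1+\beta V(x) \leq 1 + \frac{\alpha_1 \norm{\psi}_{C^0}}{D_1}\,\nu^{-d-1},
\]
which for all $\nu$ below some $\nu_0(F)$ can be absorbed into an upper bound of the form $C D_1\,\nu^{-d-1}$ with $C$ depending only on $F$ (matching the factor in the statement). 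The conclusion then follows by writing, for any $C_0 \in \R$,
\[
\sup_x |u(x)+C_0| \leq \sup_x\bigl(1+\beta V(x)\bigr)\cdot \norm{u+C_0}_{\beta V} \leq CD_1\nu^{-d-1}\,\norm{u+C_0}_{\beta V},
\]
and taking the infimum over $C_0$ on both sides.

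The main (and only) subtle point is the choice of whether to absorb the additive $1$ on the right-hand side of the bound for $1+\beta V(x)$ into the $\nu^{-d-1}$ term; this is legitimate exactly because we have restricted to $\nu$ small, and doing so gives the clean form stated in \eqref{eq:9}. Neither step requires any of the deeper Weak KAM or hyperbolicity inputs developed earlier in the paper.
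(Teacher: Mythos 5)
Your proof is correct and follows essentially the same elementary argument as the paper: both inequalities come from the pointwise estimate $1 \le 1+\beta V(x) \le 1+\beta\|V\|_{C^0}$ combined with $\|V\|_{C^0}\le\|\psi\|_{C^0}\nu^{-d}$ and $\beta\le D_1^{-1}\nu^{-1}$, then optimizing over the additive constant. The only cosmetic difference is that the paper introduces the optimal shifts $\bar u$, $\bar u_\beta$ up front and compares, while you infimize at the end; both versions also implicitly absorb the additive $1$ and the factor $D_1^{-1}$ into $CD_1\nu^{-d-1}$ using $\nu$ small and $D_1\ge 1$, which is fine in the context where the lemma is applied.
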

\begin{proof}
Let $\bar{u}$ be the constant such that $\normstar{u}=\norm{u-\bar{u}}$ and let $\bar{u}_{\beta}$ be the constant such that
$\normbetastar{u}=\norm{u-\bar{u}_{\beta}}_{\beta V}$. Then,
\begin{equation}
	\label{eq:8}
	\begin{split}
	\normbetastar{u}&=\norm{\frac{u-\bar{u}_{\beta}}{1+\beta V}}\leq \norm{\frac{u-\bar{u}}{1+\beta V}}\leq \norm{u-\bar{u}}=\normstar{u}.
	\end{split} 
\end{equation}
Similarly,
\begin{equation}
	\label{eq:1}
	\begin{split}
	\normstar{u}&\leq \norm{u-\bar{u}_{\beta}}\leq (1+\beta \|V\|_{C^0} )\norm{u-\bar{u}_{\beta}}_{\beta V}\\
	& \le \|\psi\|_{C^0} D_1 \nu^{-d-1} \normbetastar{u}, 
	\end{split}
\end{equation}
since $\|V\|_{C^0} \le \|\psi\|_{C^0} \nu^{-d}$, and $\beta \le D_1^{-1} \nu^{-1}$. 
\end{proof}

\begin{lemma}\label{lem:ratio-star}
Suppose $u, v \in \Cper(\R^d)$ with $\min u = a$, $\min v = b$, $a, b \ge 1$. Suppose $\omega := \max\{\|u\|_*, \|v\|_*\} < \frac{1}{4}$, 
\[
	\left\| \log \frac{u}{v} \right\|_* \le  4\omega.
\]
\end{lemma}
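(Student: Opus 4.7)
The plan is to exploit the fact that the $\|\cdot\|_*$ norm is just half the oscillation: for any continuous periodic $f$, choosing $C = -(\max f + \min f)/2$ gives $\|f\|_* = \tfrac12(\max f - \min f)$. So the assumption $\|u\|_*, \|v\|_* \le \omega < 1/4$ translates directly into $\mathrm{osc}(u), \mathrm{osc}(v) \le 2\omega < 1/2$. Combined with $\min u = a \ge 1$ and $\min v = b \ge 1$, this pins $u$ to the interval $[a, a+2\omega]$ and $v$ to $[b, b+2\omega]$, both of which lie in $[1, \infty)$, so $\log u$ and $\log v$ are well-defined and finite.

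Next I would bound the oscillations of $\log u$ and $\log v$ separately. Since $\log$ is monotone on $(0,\infty)$,
\[
\mathrm{osc}(\log u) = \log\!\left(\frac{a + 2\omega}{a}\right) = \log\!\left(1 + \frac{2\omega}{a}\right) \le \frac{2\omega}{a} \le 2\omega,
\]
using $\log(1+x) \le x$ for $x \ge 0$ and the hypothesis $a \ge 1$. The same estimate holds for $v$. Converting back via the oscillation/$\|\cdot\|_*$ identity gives $\|\log u\|_* \le \omega$ and $\|\log v\|_* \le \omega$.

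Finally, I would apply the triangle inequality for $\|\cdot\|_*$ (which follows immediately from the infimum definition), obtaining
\[
\left\|\log \frac{u}{v}\right\|_* = \|\log u - \log v\|_* \le \|\log u\|_* + \|\log v\|_* \le 2\omega \le 4\omega.
\]
There is no real obstacle here; the whole content is the two elementary bounds $\|f\|_* = \tfrac12\mathrm{osc}(f)$ and $\log(1+x)\le x$, and the only place the hypotheses are used is to ensure positivity of $u,v$ and a linear bound on $\log$ at scale comparable to $\omega$. The factor $4$ in the statement is simply slack, the argument in fact gives the sharper constant $2$.
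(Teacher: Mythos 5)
Your proof is correct, and in fact it is cleaner than the paper's and yields the sharper constant $2\omega$ in place of $4\omega$. The paper works multiplicatively: it shows $0 \le u - a \le 2\|u\|_*$ and $0 \le v - b \le 2\|v\|_*$, uses the algebraic identity $\frac{u}{v} - \frac{a}{b} = \frac{u(b-v)+(u-a)v}{vb}$ to deduce $\bigl|\frac{u}{v}\big/\frac{a}{b} - 1\bigr| \le 2\omega < \frac12$, and then invokes $|\log(1+x)| \le 2|x|$ for $|x|<\frac12$. This is where the hypothesis $\omega < \frac14$ and the loss of a factor of $2$ come from. You instead decompose additively, bounding $\mathrm{osc}(\log u)$ and $\mathrm{osc}(\log v)$ separately via $\log(1+x)\le x$ and the identity $\|f\|_* = \frac12\,\mathrm{osc}(f)$, and then apply the triangle inequality for $\|\cdot\|_*$; this avoids the ratio manipulation, does not need $\omega < \frac14$ at all, and gives $2\omega$. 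One cosmetic remark: you wrote $\mathrm{osc}(\log u) = \log\!\bigl(1 + \frac{2\omega}{a}\bigr)$ with an equality, but since $\max u$ may be strictly less than $a+2\omega$ this should be $\le$; the direction you need is preserved, so the argument is unaffected.
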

\begin{proof}
Note that $0 \le u - a \le 2 \|u\|_*$, and $0 \le v - b \le 2 \|v\|_*$. We have
\[
	\frac{u}{v} - \frac{a}{b} = \frac{u(b - v) + (u - a)v}{v b} 
	\le \frac{u - a}{b} \le \frac{2\|u\|_*}{b} \le \frac{a}{b}\cdot 2\|u\|_*, 
\]
by the same calculation, 
\[
	\frac{u}{v} - \frac{a}{b} \ge - \frac{a}{b} \cdot 2 \|v\|_*.
\]
We get
\[
\left| \frac{u}{v} \Bigr/ \frac{a}{b} - 1\right| < 2\omega < \frac12. 
\]
Note that $|\log(1 + x)| \le 2|x|$ for all $|x| < \frac12$, we get 
\[
\left\| \log \frac{u}{v} \right\|_* 
\le  \left\|\log \left( \frac{u}{v} \Bigr / \frac{a}{b} \right) \right\|
\le 2 \left\| \frac{u}{v} \Bigr / \frac{a}{b} - 1 \right\| \le 4\omega. 
\]
\end{proof}

\begin{proof}[Proof of Theorem~\ref{thm:exp-conv-heat}]
Theorem~\ref{thm:bootstrap} implies for all $n \in \N$, and $\nu \in (0, \nu_0)$,
\[
\left\| -\frac{\psi}{2\nu} + \log \tL_\nu^n \left( e^{\frac{\psi}{2\nu}}  \right) \right\|_*
= 
\|\log \tL^n_\nu \bOne\|_* \le \log(M^2 Q_n) + \frac{d}{2} \log \frac{1}{\nu},  
\]
where we applied \eqref{eq:conj-L} in the first equality. 
It follows that 
\[
\left\| \log \tL_\nu^n \left( e^{\frac{\psi}{2\nu}}  \right) \right\|_* \le \log(M^2 Q_n) + \frac{d}{2} \log \frac{1}{\nu} + \|\psi\|_*/(2\nu) \le C/\nu
\]
if $C > \|\psi\|_*/2 + 1$ and $\nu_0$ is small enough. For a \emph{fixed} $\nu > 0$, the functions $\log \tL_\nu^n \left( e^{\frac{\psi}{2\nu}} \right)$ are uniformly (in $n$) Lipschitz (see for example \cite{GIK+2005}), and therefore $\log \tL_\nu^n \left( e^{\frac{\psi}{2\nu}} \right)$ has a limit point in $\|\cdot\|_*$, which we call $u^\nu$ and normalize to $\min \log u^\nu = 0$.

Suppose $0 \le \log u \le D/\nu$ for some $D > 0$. By \eqref{eq:P-contraction} and Lemma~\ref{lem:beta_estimate}, we have 
\begin{equation}
	\label{eq:Lu-exp-bound}
	\begin{aligned}
	\|\tL_\nu^n u / (\tL_\nu^n \bOne)\|_*  
	& = \|\cP^n u\|_* \le C\nu^{-d-1} \|\cP^n u\|_{\beta V, *} \le C \alpha^n \nu^{-d-1} \|u\|_{\beta V, *} \\
	& \le C \alpha^n \nu^{-d-1} \|u\|_*  \le C \alpha^n \nu^{-d-1} e^{D/\nu}. 
	\end{aligned}
\end{equation}
Note also $\tL_\nu^n u / (\tL_\nu^n \bOne) = \cP^n u \ge \cP^n \bOne = 1$.

We have
\[
	\cL_\nu^n u_0 = e^{-\frac{\psi}{2\nu}} \tL_\nu^n \left( e^{\frac{\psi}{2\nu}} u_0 \right). 
\]
Suppose $0 \le \log u_0, \log v_0 \le D_1/\nu$ for some $D_1 > 0$, we set $u = e^{\psi/(2\nu)}u_0$, $v = e^{\psi/2\nu} v_0$, then $0 \le \log u, \log v \le (D_1 + \|\psi\|_{C^0})/\nu$. Denote
\[
\omega_n = C \alpha^n \nu^{-d-1} e^{(D_1 + \|\psi\|_{C^0})/\nu}, 
\]
then there exists $C_1> 0$ such that if $n > C_1/\nu$, $\omega_n < \frac{1}{4}$. Then 
\[
	\begin{aligned}
	\left\| \log \frac{\cL_\nu^n u_0}{\cL_\nu^n v_0} \right\|_*
	= \left\|\log  \frac{\tL_\nu^n u}{\tL_\nu^n v} \right\|_* 
	= \left\| \log \frac{\tL_\nu^n u / \cL_\nu^n \bOne}{\tL_\nu^n v / \tL_\nu^n \bOne} \right\|_*
	\le 4\omega_n
	\end{aligned}
\]
where we used \eqref{eq:Lu-exp-bound} and Lemma~\ref{lem:ratio-star}. By choosing a larger $C_1 > \|\psi\|_{C^0}$ if needed, we can ensure $4\omega_n \le e^{(D_1 + C_1)/\nu} \alpha^n$ for all $n \ge C_1/\nu$ and $\nu \in (0, \nu_0)$.  
\end{proof}

\begin{lemma}\label{lem:tL-chi}
There exists a constant $C > 1$ such that 
\[
\tL_\nu \chi_\nu \le C \chi_\nu. 
\]
\end{lemma}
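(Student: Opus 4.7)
The strategy is to decompose $\chi_\nu = \bOne_U + \nu^{-d/2}\bOne_{U^c}$, so that
\[
\tL_\nu \chi_\nu(x) = \tL_\nu(\bOne_U)(x) + \nu^{-d/2} \tL_\nu(\bOne_{U^c})(x).
\]
The first summand is dominated by $\tL_\nu \bOne(x)$, which by Proposition~\ref{prop:laplace} (applied at $n=1$) satisfies $\tL_\nu\bOne(x) \le C\chi_\nu(x)$; so it suffices to show that $\tL_\nu(\bOne_{U^c})(x) = o(\nu^{d/2})$ uniformly in $x\in\R^d$, as this will give $\nu^{-d/2}\tL_\nu(\bOne_{U^c})(x) \le C \le C\chi_\nu(x)$ for $\nu$ sufficiently small.

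To bound $\int_{U^c} \tlK_\nu(y,x)\,dy$ I would split based on whether $x$ lies in $U$. If $x \in U$, Corollary~\ref{cor:tlh-lower} provides both $\tlh(y,x) \ge \delta|y-\bary(x)|^2$ on all of $\R^d$ and $\tlh(y,x) \ge \delta$ on $U^c$; averaging these gives $\tlh(y,x) \ge \frac{\delta}{2} + \frac{\delta}{2}|y-\bary(x)|^2$ on $U^c$, and a standard Gaussian computation yields $\int_{U^c} \tlK_\nu(y,x)\,dy \le C e^{-\delta/(4\nu)}$. If $x \notin U$ the quadratic bound in $|y-\bary(x)|$ is unavailable, but Gaussian decay in $y$ at infinity substitutes for it: since $F\ge 0$ and $\psi\ge 0$, one has $\tlh(y,x) \ge \frac{1}{2}|y-x|^2 - \|\psi\|_{C^0}$, hence $\tlh \ge \frac{1}{4}|y-x|^2$ once $|y-x|$ exceeds a fixed $R$. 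On $U^c \cap \{|y-x|\le R\}$ one uses $\tlh \ge \delta$ to dominate the integral by $C\nu^{-d/2}e^{-\delta/(2\nu)}$; on $U^c \cap \{|y-x|>R\}$ the Gaussian tail yields $O(\nu^k)$ for any $k$. In either case $\tL_\nu(\bOne_{U^c})(x)$ decays faster than any polynomial in $\nu$.

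The main conceptual point is that the $\nu^{-d/2}$ blow-up of $\chi_\nu$ off of $U$ is exactly absorbed by the exponential-in-$1/\nu$ smallness of $\tL_\nu(\bOne_{U^c})$. The principal imported ingredient is Proposition~\ref{prop:laplace}, and the main technical obstacle is the case $x\notin U$, where the quadratic lower bound on $\tlh$ (as a function of $y$ centered at $\bary(x)$) is unavailable; this is precisely the scenario flagged in the remark after Proposition~\ref{prop:U-non-deg}. It is resolved by the Gaussian-at-infinity argument above, which uses only the explicit form $h(y,x) = \frac{1}{2}|y-x|^2 + F(y)$ together with $F, \psi \ge 0$.
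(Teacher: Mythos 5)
Your proof is correct and follows essentially the same route as the paper's: the paper likewise splits $\chi_\nu$ over $U$ and $U^c$, controls the $U$ contribution via $\tL_\nu\bOne$ and Proposition~\ref{prop:U-laplace}/Proposition~\ref{prop:laplace} at $n=1$, and absorbs the $\nu^{-d/2}$ weight on the $U^c$ contribution by the bound $\tlh\ge\delta$ there from Corollary~\ref{cor:tlh-lower}, together with Gaussian decay for $|y-x|$ large (the ``case 2 of Item (1)'' argument in Proposition~\ref{prop:pi-hm}). The only cosmetic difference is that the paper delegates the Gaussian-at-infinity estimate to Lemma~\ref{lem:H-integrable}, whereas you rederive the equivalent bound $\tlh(y,x)\ge\frac12|y-x|^2-\|\psi\|_{C^0}$ inline.
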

\begin{proof}
The case $x \in U$ follows directly from Proposition~\ref{prop:U-laplace} for $n = 1$. The case $x \notin U$ is identical to the proof of Item (1), case 2 of Proposition~\ref{prop:pi-hm}. 
\end{proof}

\begin{proof}[Proof of Theorem~\ref{thm:bootstrap}]
Let $C$ be the largest of the constants in \eqref{eq:Cn-growth}, \eqref{eq:Ln-one}, Lemma~\ref{lem:tL-chi} and Lemma~\ref{lem:beta_estimate}. Set $M = 2C$, and let $C_1 > 1$, $\alpha, \alpha_1 \in (0, 1)$ and $\nu_1$ be the constants obtained by applying Corollary~\ref{cor:contraction} with parameter $D = M$. Choose $0 < \nu_2 \le \nu_1$ such that 
\begin{equation}
  \label{eq:bounded-by-C}
  2 C C_1 M^2 \nu^{-3d/2-1} \alpha^{N_1(\nu)} < 1 \quad \text{ for all }  \nu \in (0, \nu_2). 
\end{equation}
(This is possible because $N_1(\nu) = C^{-1} (\nu \log \frac{1}{\nu})^{-\frac13}$). 

First we show that there exists $Q_n > 0$ such that for all $n \in \N$ and $\nu \in (0, \nu_2)$. 
\begin{equation}
  \label{eq:Ln-M-bound}
  M^{-1} < \frac{\tL_\nu^n \bOne}{Q_n} < M \chi_\nu. 
\end{equation}

Fix a $\nu \in (0, \nu_2)$, denote $N = N_1(\nu)$, we proceed by induction in step size $N$. Suppose that \eqref{eq:Ln-M-bound} hold for $0 \le n \le kN$ for a given $k \ge 1$. The inductive hypothesis holds for $k = 1$ by Proposition~\ref{prop:laplace}.  Corollary~\ref{cor:contraction} implies for all $0 \le n \le k N$, 
\[
\|\cP^n u\|_{\beta V, *} \le \alpha^n \|u\|_{\beta V, *}, 
\]
where $\beta = \alpha_1/(C_1\nu)$. Set $R_n = \min \tL_\nu^n \bOne$, then if \eqref{eq:Ln-M-bound} is satisfied, we have 
\begin{equation}
  \label{eq:minL-bound}
  \left\| \tL_\nu^n \bOne / R_n \right\|_* \le \frac{\sup \tL_\nu^n \bOne}{ \min \tL_\nu^n \bOne} 
  \le M^2 \nu^{-d/2}. 
\end{equation}

Suppose $kN < n \le (k+1) N$, we have 
\[
\begin{aligned}
\tL_\nu^n \bOne 
& = (\tL_\nu^N \bOne)  \cP^N(\tL_\nu^{n - N}\bOne)  
\le C \chi_\nu  Q_N R_{n - N} \cP^N\left( \tL_\nu^{n - N}\bOne/ R_{n - N} \right) \\
& = C \chi_\nu  Q_N R_{n - N}  \left( 1 + 2 \left\| \cP^N \left(  \tL_\nu^{n - N}\bOne/ R_{n - N} \right) \right\|_* \right) \\
& \le C \chi_\nu  Q_N R_{n - N}  \left( 1 + 2 C C_1 \nu^{-d-1} \left\| \cP^N \left(  \tL_\nu^{n - N}\bOne/ R_{n - N} \right) \right\|_{\beta V, *} \right) \\
& \le C \chi_\nu  Q_N R_{n - N}  \left( 1 + 2 C C_1 \nu^{-d-1} \alpha^N \|\tL_\nu^{n - N}\bOne/ R_{n - N}\|_{*} \right) \\
& \le C \chi_\nu Q_N R_{n - N} \left( 1 + 2 C C_1 M^2 \nu^{-3d/2-1} \alpha^N \right) \le
2C \chi_\nu Q_N R_{n - N},  
 \end{aligned}
\]
where in the last line we used \eqref{eq:bounded-by-C}. 
The converse is easier since 
\[
\tL_\nu^n \bOne = (\tL_\nu^N \bOne)  \cP^N(\tL_\nu^{n - N}\bOne) \ge C^{-1} Q_N R_{n - N}. 
\]
Therefore \eqref{eq:Ln-M-bound} holds with $Q_n = Q_N R_{n - N}$. 

We now show that the constants chosen satisfies
\[
M^{-3} \le \frac{Q_{n+1}}{Q_n} \le M^3. 
\]
Byy Lemma \ref{lem:tL-chi}
\[
\tL_\nu^{n+1} \bOne = \tL_\nu \left( \tL_\nu^n \bOne \right) 
\le \tL_\nu \left( M Q_n \chi_\nu \right) \le C M Q_n \chi_\nu,
\]
For the lower bound, 
\[
\tL_\nu^{n+1} \bOne = \tL_\nu \left( \tL_\nu^n \bOne \right) \ge M^{-1}Q_n\tL_\nu\bOne\ge C^{-1} M^{-1} Q_n. 
\]
We now use \eqref{eq:Ln-M-bound} to get 
\[
M^{-1} \le \tL_\nu^{n+1} \bOne/Q_{n+1} \le CM\chi_\nu\frac{Q_n}{Q_{n+1}}, \quad
M\chi_\nu \ge \tL_\nu^{n+1} \bOne/Q_{n+1} \ge C^{-1} M ^{-1} \frac{Q_n}{Q_{n+1}}, 
\]
so
\[
C^{-1}M^{-2}\leq \frac{Q_n}{Q_{n+1}}\leq CM^2
\]
\end{proof}

We have concluded the proof of the main theorem with the exception of Proposition~\ref{prop:laplace}, which we prove in the next two sections. 

\section{Estimate of Hessian matrix}
\label{sec:hess}

Fix $x \in \cD(\psi)$ and $n\in\N$. Let $X=(x_{-n},\ldots,x_{-1}) \in \R^{nd}$, $x_0 = x \in \R^d$, and denote $H_{n,x}(X)=\sum_{i=-n}^{-1}h(x_i,x_{i+1})+\psi(x_{-n})-\psi(x)$, then 
\begin{equation}
	\label{eq:tL-int}
	\tL_\nu^n \bOne(x) = (4\pi \nu)^{-\frac{nd}{2}} \int\cdots\int\exp\left(-\frac{1}{2\nu}H_{n,x}(X)\right)dx_{-n}\cdots dx_{-1}. 
\end{equation}
The classical Laplace method (see for example \cite{dBru1981}) suggests that if the function $H_{n, x}(X)$ has a unique global minimum at $X_*$, then 
\[
	\cL_\nu^n \bOne (x) \sim e^{- \frac{1}{2\nu} H_{n, x}(X_*)} \left( \det D^2 H_{n, x}(X_*) \right)^{-\frac12} ( 1 + o_{\nu \to 0}(1)). 
\]
In this section we carry out preliminary estimates on the Hessian matrix $D^2 H_{n, x}(X_*)$.

Set $x_{-k}^* = x^*_{-k}(x) = \pi_1 \Phi^{-k}(x, \nabla \psi(x))$ for all $k < 0$, then $H_{n, x}$ reaches its global minimum at $X^* = X^*(x) = (x_{-n}^*, \ldots, x_{-1}^*)$.  Denote 
\[
	\cA_{n, x}(X) = \cA_n(x_{-n},\cdots,x_{-1}) = D^2_{x_{-n},\ldots,x_{-1}}H_{n,x}(X), \quad
	\cA_{n, x}^* = \cA_{n,x}(X^*(x)),  
\]
we have
\begin{equation}
	\label{eq:hess}
	\begin{aligned}
	& \qquad \cA_{n,x}(x_{-n}, \cdots, x_{-1})  = \\
	& \begin{bmatrix}
	I_d +D^2(F + \psi)(x_{-n}) & - I_d                                      &        &                         & \\
	-I_d                       & 2I_d + D^2 F(x_{-(n-1)})                  & \ddots &                         & \\
	& \ddots                                   & \ddots &                         & \\
	&                                          &        & 2I_d + D^2 F(x_{-2}) & -I_d \\
	&                                          &       & -I_d                       &  2I_d + D^2 F(x_{-1})
	\end{bmatrix}. 
	\end{aligned}
\end{equation}

The goal of this section is to prove the following estimates:
\begin{proposition}\label{prop:Hessian-bound}
There exists $C, \mu>1$, independent of $n$, such that for all $x \in U$ and $n\in\N$, 
\begin{equation}
	\label{eq:hessian-bound}
	C^{-1} \mu^n\leq \det \cA_{n,x}^{*}\leq C \mu^n
\end{equation}
\end{proposition}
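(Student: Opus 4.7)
The first step is to convert the determinant of the $nd\times nd$ block tridiagonal matrix $\cA_{n,x}^{*}$ into a Gelfand--Yaglom-type Jacobi recurrence for $d\times d$ matrices. Writing the diagonal blocks from top-left to bottom-right as $B_1,\ldots,B_n$, where
\[
B_1 = I + D^2(F+\psi)(x_{-n}^{*}), \qquad B_j = 2I + D^2 F(x^{*}_{-n-1+j}) \ (j\ge 2),
\]
and the off-diagonal blocks are $-I$, I would define matrices $P_j$ by
\[
P_{-1}=0,\quad P_0=I,\quad P_j = B_j P_{j-1} - P_{j-2}\quad (1\le j\le n),
\]
and verify (by block Schur-complement expansion along the last block row) that $\det\cA_{n,x}^{*}=\det P_n$. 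Equivalently, in terms of the symplectic transfer matrix $T_j=\bigl(\begin{smallmatrix}B_j & -I\\ I & 0\end{smallmatrix}\bigr)$, one has $\bigl(\begin{smallmatrix}P_n\\ P_{n-1}\end{smallmatrix}\bigr)=T_n\cdots T_1\bigl(\begin{smallmatrix}I\\ 0\end{smallmatrix}\bigr)$. This reduces the problem to estimating the growth of a non-autonomous linear recurrence driven by the backward orbit $\{x_{-k}^{*}\}$.

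The second step exploits hyperbolicity. By Proposition~\ref{prop:hyp}(3), the backward orbit of any $x\in \cD(\psi)$ enters $W^u_\loc$ in a uniformly bounded number of steps $C_0$, and inside $W^u_\loc$ contracts to $0$ at a uniform exponential rate $\rho<1$. Thus for $1\le j\le n-C_0$ we have $B_j = B_* + O(\rho^{n+1-j})$ where $B_*:=2I+M$, $M=D^2F(0)$, while the last $C_0$ blocks remain in a bounded set depending only on $F$. Moreover, as $n\to\infty$ with $x$ fixed, $x_{-n}^{*}\to 0$ exponentially and $D^2\psi(x_{-n}^{*})\to S^+$, giving
\[
B_1 = I + M + S^+ + O(\rho^n) = \lambda_+ + O(\rho^n),
\]
where $\lambda_\pm = (B_*\pm\sqrt{B_*^2-4I})/2$ are the eigenvalue-matrices of the constant-coefficient transfer matrix $T_*$; they commute, satisfy $\lambda_+\lambda_-=I$ and $\lambda_++\lambda_-=B_*$, and, by item (1) of Proposition~\ref{prop:hyp}, the spectral radius of $\lambda_-$ is strictly less than $1$. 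I would then observe the key algebraic identity: in the idealized constant-coefficient case with $B_1=\lambda_+$ and $B_j=B_*$ for $j\ge 2$, the general solution $P_j = A\lambda_+^{j}+B\lambda_-^{j}$ of the limiting recurrence subject to $P_0=I$, $P_1=\lambda_+$ gives $A=I$, $B=0$, so $P_j = \lambda_+^{j}$ exactly. Consequently, in the idealized limit $\det P_n = \mu^n$ with $\mu:=\det\lambda_+ = \det(I+M+S^+)>1$. This is not a coincidence: it reflects the fact that the graph of $\nabla\psi$ is (a piece of) the unstable manifold, so the boundary block $B_1$ automatically aligns the iteration with the unstable direction of $T_*$.

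The third step is a non-autonomous hyperbolic perturbation argument to transfer the idealized identity into the two-sided bound. Since $T_*$ admits invariant splittings of $\R^{2d}$ into the $\lambda_\pm$ eigenspaces, and $T_j\to T_*$ exponentially for $j\le n-C_0$, a standard cone-field/graph-transform argument (roughness of exponential dichotomies) produces unit vectors $\vec v_j^{u}$ in uniformly transverse unstable cones such that $T_j\vec v_{j-1}^{u}=\xi_j \vec v_j^{u}$ with $\prod_{j=2}^{n} \xi_j \in [C^{-1}\mu^{n-1},C\mu^{n-1}]$ (in the sense that each scalar factor lies in a bounded multiplicative neighborhood of the corresponding eigenvalue-matrix action on the unstable bundle, after diagonalizing $M$). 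The initial vector $\bigl(\begin{smallmatrix}I\\ 0\end{smallmatrix}\bigr)$ is pushed by $T_1$ to $\bigl(\begin{smallmatrix}B_1\\ I\end{smallmatrix}\bigr)$, which lies within $O(\rho^n)$ of the unstable subspace of $T_*$; standard estimates then show that $P_n = \lambda_+^{n}\bigl(I + O(\sum_{k\ge 1}\rho^{k})\bigr)$ up to a bounded invertible correction coming from the last $C_0$ blocks, from which the bound $C^{-1}\mu^{n}\le \det P_n\le C\mu^{n}$ follows uniformly in $x\in U$ and $n\in\N$. The main obstacle I anticipate is making the perturbation analysis genuinely uniform in $x\in U$: the last $C_0$ transfer matrices are not close to $T_*$ and depend nontrivially on $x$, so one must argue that these finitely many factors cannot collapse the unstable component or blow up its norm, which requires compactness of $U$ together with the fact (from Proposition~\ref{prop:U-non-deg}) that $\psi$ is $C^2$ with uniform bounds on $U$, so that all $B_j$ are uniformly bounded and invertible.
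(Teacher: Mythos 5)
Your Step~1 is the same reduction the paper performs in Lemma~\ref{lem:block}, and your remark that "the graph of $\nabla\psi$ is a piece of the unstable manifold, so the boundary block $B_1$ automatically aligns the iteration with the unstable direction" is precisely the heart of the matter. But you then treat the non-constant coefficients $B_j$ as a perturbation of the limiting $T_*$ and invoke roughness of exponential dichotomies, whereas the paper pushes your observation further and gets an \emph{exact} identity, with no perturbation theory at all. The point is that the alignment you noticed at $j=1$ persists exactly at every step: since $(x^*_i,\nabla\psi(x^*_i))$ lies on the $\Phi$-invariant graph $W^u$ and the column $\bigl(\begin{smallmatrix}I_d\\D^2\psi(x^*_i)\end{smallmatrix}\bigr)$ spans its tangent plane there, the transfer matrix $T_j$ maps this column exactly to $\bigl(\begin{smallmatrix}I_d\\D^2\psi(x^*_{i+1})\end{smallmatrix}\bigr)\cdot(I_d+D^2(F+\psi)(x^*_i))$ — this is the content of the paper's \eqref{eq:77}. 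Iterating gives the closed-form
\[
\det \cA_{n,x}^{*}=\det\prod_{i=-n}^{-1}\bigl(I_d+D^2(F+\psi)(x^*_i)\bigr),
\]
after which exponential convergence $x^*_{-k}\to 0$ makes the two-sided bound with $\mu=\det(I_d+D^2(F+\psi)(0))$ an immediate consequence of absolute convergence of the infinite product. Your route would also work, but it is substantially heavier: the recursion is for $d\times d$ matrices, so a genuine cone-field/graph-transform argument must be carried out on the Lagrangian Grassmannian (or you must deal carefully with the non-commutativity of $D^2F(x^*_{-j})$ with $M$ after diagonalizing), and then you must still separately argue that the final $C_0$ transfer matrices (which are not close to $T_*$) contribute a bounded invertible correction, which you flag as the main obstacle. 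The exact product formula removes all of this: those last blocks simply contribute the factors $\det(I_d+D^2(F+\psi)(x^*_i))$ for $i$ near $0$, which are uniformly bounded above and below on $U$ by Proposition~\ref{prop:U-non-deg}. In short, you identified the right structural fact but used it only asymptotically; the paper uses it exactly, which is both shorter and gives uniformity in $x\in U$ for free.
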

\begin{remark}
The exponent $\mu > 1$ is related to the hyperbolic fixed point $(0, 0)$ of the dynamics. In fact 
\[
\mu = \det(I_d + D^2(F + \psi)(0)). 
\]
\end{remark}

\begin{proposition}
\label{prop:min_eigenvalue}
There exists $C>1$ such that for $x \in U$
\begin{equation}
	\label{eq:A-lower}
	\cA_{n, x}^* \geq C^{-1}I_{nd}.
\end{equation}
\end{proposition}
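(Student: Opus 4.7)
The plan is to deduce the bound in two steps: (i) use the non-degeneracy from Proposition~\ref{prop:U-non-deg} to dominate $\cA^*_{n,x}$ from below by a simpler quadratic form $\mathcal{Q}$ built from the differentials of $\bary$ along the minimizing orbit; (ii) establish uniform-in-$n$ positivity of $\mathcal{Q}$ using the contraction of $\bary$ near the hyperbolic fixed point $0$ together with the bounded-time entry property from Proposition~\ref{prop:hyp}(3). The delicate part is (ii), where the last finitely many indices of the orbit may lie outside the contracting neighborhood and must be absorbed into a bounded multiplicative constant.

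For step (i), using the telescoped form $H_{n,x}(X) = \sum_{i=-n}^{-1}\tlh(x_i,x_{i+1})$ with $x_0 = x$ fixed, the matrix $\cA^*_{n,x}$ decomposes as the sum of the $2d\times 2d$ block Hessians $D^2\tlh(x_i^*, x_{i+1}^*)$ embedded at block positions $(i,i+1)$, with $\xi_0 = 0$ for the $i=-1$ boundary term. Since $x_i^* = \bary(x_{i+1}^*)\in\cD^-(\psi)\subset U$, the inequality $\tlh(y,x) - \delta|y-\bary(x)|^2 \geq 0$ holds locally around $(x_i^*, x_{i+1}^*)$ with both sides vanishing there; comparing Hessians at this common zero gives
\[
	D^2\tlh(x_i^*, x_{i+1}^*) \;\geq\; 2\delta\, P_i, \qquad P_i := \begin{pmatrix} I_d \\ -D_i^T\end{pmatrix}\begin{pmatrix} I_d & -D_i\end{pmatrix}, \qquad D_i := D\bary(x_{i+1}^*),
\]
using the elementary fact that the Hessian of $(y,x) \mapsto |y - \bary(x)|^2$ at $y = \bary(x)$ equals $2P_i$. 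Summing over $i$ produces $\cA^*_{n,x} \geq 2\delta\,\mathcal{Q}$, where $\langle\mathcal{Q}\,\Xi,\Xi\rangle = \sum_{i=-n}^{-1}|\xi_i - D_i\xi_{i+1}|^2$ with the convention $\xi_0 := 0$.

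For step (ii), by Proposition~\ref{prop:hyp}(1) and \eqref{eq:hor-expansion}, $D\bary(0) = (I_d + M + S^+)^{-1}$ is a strict contraction; hence there exist $r > 0$ and $\kappa \in (0,1)$, depending only on $F$, with $\|D\bary(y)\| \leq \kappa$ on $B_r$. On $U$ the map $\bary = \pi_1\circ\Phi^{-1}\circ(\mathrm{Id},\nabla\psi)$ is $C^2$ (since $\psi\in C^3(U)$ by Proposition~\ref{prop:U-non-deg}(1)), so $\|D\bary\| \leq K$ uniformly on $U$. Proposition~\ref{prop:hyp}(3) supplies a constant $C_0 = C_0(F)$ such that $x_{-k}^* \in B_r$ for all $k \geq C_0$. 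Solving the recurrence $\xi_i = D_i\xi_{i+1} + \alpha_i$ with $\alpha_i := \xi_i - D_i\xi_{i+1}$ and $\xi_0 = 0$ yields $\xi_{-k} = \sum_{j=1}^{k}(D_{-k}D_{-k+1}\cdots D_{-j-1})\alpha_{-j}$; each product has at most $C_0$ non-contractive factors and hence operator norm at most $K^{C_0}\kappa^{\max(k-j-C_0,0)}$. A weighted Cauchy--Schwarz estimate followed by geometric summation in $k$ then gives $\|\Xi\|^2 \leq L\,\langle\mathcal{Q}\,\Xi,\Xi\rangle$ for a constant $L$ depending only on $K,\kappa,C_0$, so $\cA^*_{n,x} \geq (2\delta/L)\, I_{nd}$, yielding the proposition with $C = L/(2\delta)$.
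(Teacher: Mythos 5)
Your proof is correct, and it takes a genuinely different route from the paper's.

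The paper proves Proposition~\ref{prop:min_eigenvalue} via the Poincar\'e separation theorem (Lemma~\ref{lem:seperation}): it compares $\cA^*_{n,x}$ with the truncated block-tridiagonal matrix $B_{N,n}$ obtained by discarding the first $N$ block rows and columns, where $N$ is chosen so that $D^2F(x^*_{-k})\geq\tfrac{\delta}{2}$ for $k>N$ (using $D^2F(0)>0$ and the convergence $x^*_{-k}\to 0$). Then $B_{N,n}$ is bounded below uniformly in $n$ by a constant-coefficient tridiagonal matrix, the determinant ratio $\det\cA^*_{n,x}/\det B_{N,n}$ is controlled via \eqref{eq:hess_prod} (a product of $N$ bounded factors), and the eigenvalue interlacing from Lemma~\ref{lem:seperation} converts the determinant bound into a lower bound on $\lambda_1(\cA^*_{n,x})$. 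Your argument instead exploits the telescoped decomposition $H_{n,x}=\sum_i\tlh(x_i,x_{i+1})$ and the uniform non-degeneracy $\tlh(y,x)\geq\delta|y-\bary(x)|^2$ (Corollary~\ref{cor:tlh-lower}) to dominate each $2d\times 2d$ block Hessian by the rank-$d$ projection $P_i$; the resulting telegraph-type quadratic form $\sum_i|\xi_i-D_i\xi_{i+1}|^2$ (with $\xi_0=0$) is then controlled below by reading off the recursion and using that $D\bary=I_d-D^2\psi$ is a uniform contraction near $0$ together with the bounded-time entry from Proposition~\ref{prop:hyp}(3). Both approaches ultimately rest on the hyperbolicity at the origin, but encode it differently: the paper through the positivity of $D^2F(0)$ on the diagonal blocks and the determinant identity, you through the contraction of $D\bary$. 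Your route is perhaps more geometric (it makes the discrete Poincar\'e inequality along the backward orbit explicit) and sidesteps the separation theorem entirely; it also makes the interaction with Proposition~\ref{prop:U-non-deg} more visible. A minor point worth spelling out: Proposition~\ref{prop:hyp}(3) gives uniform entry into $W^u_\loc$ in bounded time, and to get entry into $B_r$ for the particular $r$ on which $\|D\bary\|\leq\kappa$ one should take $W^u_\loc$ (and hence the constant $C$ of that proposition) small enough at the outset; this is harmless but deserves a sentence.
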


The main idea behind Proposition~\ref{prop:Hessian-bound} is that for each $x$, the sequence $x^*_{-k}(x)$ converges to $0$ exponentially, since they are the $x$ component of a backward orbit on the unstable manifold of $(0, 0)$. We can represent $\log \det \cA^*_{n,x}$ as a sum over the orbit (this connection has already appeared in \cite{Aubry1992}, see also \cite{Anantharaman2004}), which becomes a uniformly convergent sum. Proposition~\ref{prop:min_eigenvalue} also exploits this connection. 

We need a few lemmas for Proposition~\ref{prop:Hessian-bound}. 
\begin{lemma}
\label{lem:block}
Consider
\begin{equation}
	\label{eq:block_matrix}
	D_n=
	\begin{bmatrix}
	A_1 & -I_d &  & & & \\
	-I_d & A_2 & -I_d & & & \\
	& \ddots & \ddots & \ddots & \\
	&        & -I_d & A_{n-1} & -I_d \\
	&        &    &   -I_d & A_n \\ 
	\end{bmatrix}
\end{equation}
where $I_d, A_i$ are $d\times d$ matrices. Then
\begin{equation}
	\label{eq:det_dn}
	\det D_n=\det\left[\
	\begin{pmatrix}
	A_n & -I_d\\
	I_d & O_d
	\end{pmatrix}
	\begin{pmatrix}
	A_{n-1} & -I_d\\
	I_d & O_d
	\end{pmatrix}\cdots
	\begin{pmatrix}
	A_1 & -I_d\\
	I_d & O_d
	\end{pmatrix}
	\right]_{11}
\end{equation}
where $[\cdot]_{11}$ denote the top left element of the $2\times 2$ block matrix. Conjugate $
\begin{pmatrix}
A_i & -I\\
I & O
\end{pmatrix}
$ with $
\begin{pmatrix}
I & -I\\
O & -I
\end{pmatrix}
$, \eqref{eq:det_dn} becomes:
\begin{equation}
	\label{eq:matrix}
	\det D_n=\det\left[\
	\begin{pmatrix}
	A_n-I & I\\
	A_n-2I & I
	\end{pmatrix}
	\cdots
	\begin{pmatrix}
	A_1-I & I\\
	A_1-2I & I
	\end{pmatrix}
	\begin{pmatrix}
	I\\
	I
	\end{pmatrix}
	\right]_{1}
\end{equation}
\end{lemma}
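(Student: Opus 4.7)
The plan is to prove \eqref{eq:det_dn} by induction via block Gaussian elimination, then deduce \eqref{eq:matrix} by an explicit conjugation. I would first introduce the transfer matrices $T_i := \begin{pmatrix} A_i & -I_d \\ I_d & O_d \end{pmatrix}$ and the product $P_n := T_n T_{n-1} \cdots T_1$. Writing $P_n = \begin{pmatrix} a_n & b_n \\ c_n & d_n \end{pmatrix}$, the identity $P_{n+1} = T_{n+1} P_n$ immediately gives $c_{n+1} = a_n$ and hence the three-term matrix recurrence $a_{n+1} = A_{n+1} a_n - a_{n-1}$ with initial data $a_0 = I_d$, $a_{-1} = O_d$. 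Thus \eqref{eq:det_dn} reduces to the identity $\det D_n = \det a_n$.

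To prove this, I would induct on $n$, with trivial base case $n=1$. For the inductive step, assume $A_1$ is invertible (the general case follows by continuity of both sides in the entries of $A_1$). Adding $A_1^{-1}$ times the first block row of $D_n$ to the second block row zeros out the $(2,1)$-block and replaces $A_2$ by $A_2 - A_1^{-1}$, so $D_n$ becomes block upper triangular with diagonal blocks $A_1$ and a smaller matrix $D'_{n-1}$ of the same tridiagonal form, whose diagonal blocks are $A_2 - A_1^{-1}, A_3, \ldots, A_n$. Let $\{a'_k\}$ be the sequence generated by the analogous recurrence for $D'_{n-1}$ with $A'_1 = A_2 - A_1^{-1}$ and $A'_j = A_{j+1}$ for $j \ge 2$. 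A short induction on $k$ gives the key identity $a'_{n-1} A_1 = a_n$, and combining with $\det D'_{n-1} = \det a'_{n-1}$ (inductive hypothesis) yields $\det D_n = \det A_1 \cdot \det a'_{n-1} = \det(a'_{n-1} A_1) = \det a_n$.

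For \eqref{eq:matrix}, I would use the involutive matrix $C := \begin{pmatrix} I_d & O_d \\ I_d & -I_d \end{pmatrix}$, for which $C = C^{-1}$. A direct calculation gives $C T_i C^{-1} = \begin{pmatrix} A_i - I_d & I_d \\ A_i - 2 I_d & I_d \end{pmatrix}$ and $C^{-1} \begin{pmatrix} I_d \\ I_d \end{pmatrix} = \begin{pmatrix} I_d \\ O_d \end{pmatrix}$, so the product on the right-hand side of \eqref{eq:matrix} applied to $\begin{pmatrix} I_d \\ I_d \end{pmatrix}$ equals $C P_n \begin{pmatrix} I_d \\ O_d \end{pmatrix} = C \begin{pmatrix} a_n \\ c_n \end{pmatrix}$, whose top block is $a_n$. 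Together with the first part, this gives \eqref{eq:matrix}.

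The only real technical wrinkle is the non-commutativity of the $A_i$, which prevents scalar-style determinant shuffling and forces the key identity $a'_{n-1} A_1 = a_n$ to be verified by its own induction on the reduced problem, rather than by a direct algebraic rearrangement. Once this identity is in hand, everything else is a routine bookkeeping computation.
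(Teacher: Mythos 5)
Your proof of \eqref{eq:det_dn} is correct but takes a genuinely different route from the paper's. The paper argues via the two characteristic polynomials $p(\lambda)=\det(D_n-\lambda I_{nd})$ and $q(\lambda)=\det[M(\lambda)]_{11}$: it shows via the transfer-matrix recursion that $\ker(D_n-\lambda I_{nd})$ is nontrivial iff $\ker[M(\lambda)]_{11}$ is, so the two polynomials share roots, degree, and leading coefficient; equality then follows when the roots are simple, and is propagated to all $(A_1,\dots,A_n)$ by observing that both sides, evaluated at $\lambda=0$, are polynomials in the entries of the $A_i$'s that agree on the open set where the spectrum of $D_n$ is simple. Your argument is instead a direct block-LU reduction: eliminating the $(2,1)$-block gives $\det D_n = \det A_1 \cdot \det D'_{n-1}$ with $A'_1 = A_2 - A_1^{-1}$, and the auxiliary identity $a'_{n-1}A_1 = a_n$ (proved by its own two-term induction using $A'_{j}=A_{j+1}$ for $j\ge 2$) closes the loop, with $\det A_1\cdot\det a'_{n-1}=\det(a'_{n-1}A_1)$ sidestepping noncommutativity; the singular-$A_1$ case follows by continuity/polynomial density. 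Your route is more constructive and elementary and requires no genericity argument, while the paper's trades the bookkeeping of the reduced recurrence for an appeal to polynomial identity. For \eqref{eq:matrix}, both approaches use an explicit conjugation, but note that the involutive matrix you use, $\begin{pmatrix} I_d & O_d \\ I_d & -I_d \end{pmatrix}$, is the one that actually produces $\begin{pmatrix} A_i-I_d & I_d \\ A_i-2I_d & I_d \end{pmatrix}$; conjugating by the matrix $\begin{pmatrix} I_d & -I_d \\ O_d & -I_d \end{pmatrix}$ stated in the lemma instead yields $\begin{pmatrix} A_i-I_d & -A_i+2I_d \\ -I_d & I_d \end{pmatrix}$, so the lemma as printed contains a small typo that your computation corrects.
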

\begin{proof}
We consider the equation
\begin{equation}
	\label{eq:21}
	(-\lambda I_{nd}+D_n)
	\begin{bmatrix}
	x_1\\
	\vdots\\
	x_n
	\end{bmatrix}=0
\end{equation}
Expand in components, we have
\begin{equation}
	\label{eq:22}
	\begin{pmatrix}
	x_i\\
	x_{i-1}
	\end{pmatrix}=
	\begin{pmatrix}
	-\lambda I_d+A_{i-1} & -I_d\\
	I_d & O_d
	\end{pmatrix}
	\cdots
	\begin{pmatrix}
	-\lambda I_d+A_1 & -I_d\\
	I_d & O_d
	\end{pmatrix}
	\begin{pmatrix}
	x_1\\
	0
	\end{pmatrix}
\end{equation}
for $2\leq x\leq n-1$, and
\begin{equation}
	\label{eq:22}
	\begin{pmatrix}
	0\\
	x_n
	\end{pmatrix}=
	\begin{pmatrix}
	-\lambda I_d+A_n & -I_d\\
	I_d & O_d
	\end{pmatrix}
	\cdots
	\begin{pmatrix}
	-\lambda I_d+A_1 & -I_d\\
	I_d & O_d
	\end{pmatrix}
	\begin{pmatrix}
	x_1\\
	0
	\end{pmatrix}
\end{equation}
Denote
\begin{equation}
	\label{eq:23}
	M(\lambda)=\begin{pmatrix}
	-\lambda I_d+A_n & -I_d\\
	I_d & O_d
	\end{pmatrix}
	\cdots
	\begin{pmatrix}
	-\lambda I_d+A_1 & -I_d\\
	I_d & O_d
	\end{pmatrix}, 
\end{equation}
then $(x_1, \ldots, x_n) \in \ker (-\lambda I_{nd}+D_n)$ if and only if $x_1 \in \ker [M(\lambda)]_{11}$. Let 
\[
	p^{A_1, \ldots, A_n}(\lambda) = \det (-\lambda I_{nd}+D_n), \quad 
	q^{A_1, \ldots, A_n}(\lambda) = \det[M(\lambda)]_{11}, 
\]
then these two polynomials have the same degree, roots, and leading coefficients. They must be equal if the roots are simple. We claim that the roots of $p^{A_1, \ldots, A_n}(\lambda)$ are simple on an open set of $A_1, \ldots, A_n$. Indeed, consider $A_1, \ldots, A_n \gg 1$ and such that $\diag\{A_1, \ldots, A_n\}$ has distinct eigenvalues whose mutual distance are also much larger than $1$, then $D_n$ has distinct eigenvalues robustly. Since $p^{A_1, \ldots, A_n}(0)$ and $q^{A_1, \ldots, A_n}(0)$ are polynomials of the coefficients of $A_1, \ldots, A_n$ and they agree on an open set, they must be equal to each other. 
\end{proof}

To study the Hessian along the minimizers, notice
\begin{equation}
	\label{eq:74}
	D\Phi(x,v)=
	\begin{pmatrix}
	I_d+D^2F(x) & I_d\\
	D^2F(x) & I_d
	\end{pmatrix}
\end{equation}
and 
\begin{equation}
	\label{eq:75}
	D\Phi(x_{-n},v_{-n})
	\begin{pmatrix}
	I_d\\
	D^2\psi(x_{-n})
	\end{pmatrix}=
	\begin{pmatrix}
	D^2F(x_{-n})+D^2\psi(x_{-n}) & I_d\\
	D^2F(x_{-n})+D^2\psi(x_{-n})-I_d & I_d
	\end{pmatrix}
	\begin{pmatrix}
	I_d\\
	I_d
	\end{pmatrix}
\end{equation}
Apply \eqref{eq:matrix} to \eqref{eq:hess}, we have
\begin{equation}
	\label{eq:76}
	\begin{split}
	\det \cA(x_{-n},\cdots,x_{-1})&=\det(D^2_{x_{-n},\cdots,x_{-1}}H(x_{-n},\cdots,x_{-1}))\\
	&=\det\pi_1D\Phi^{n}(x_{-n},v_{-n})
	\begin{pmatrix}
	I_d\\
	D^2\psi(x_{-n})
	\end{pmatrix}
	\end{split}
\end{equation}
Since $(x, \nabla \psi(x))$ is contained in the $\Phi$ invariant manifold $W^u$, and $(x_i^*, \nabla \psi(x_i^*))$ is an orbit of $\Phi$, the plane bundle $
\begin{pmatrix}
I_d\\
D^2\psi(x_i^*)
\end{pmatrix}
$ is invariant under $D\Phi$. Hence
\begin{equation}
	\label{eq:77}
	D\Phi(x^*_i,\nabla\psi(x^*_i))
	\begin{pmatrix}
	I_d\\
	D^2\psi(x^*_i)
	\end{pmatrix}
	=\left(I_d+D^2(F + \psi)(x^*_i)\right)
	\begin{pmatrix}
	I_d\\
	D^2\psi(x^*_{i})
	\end{pmatrix}
\end{equation}
Therefore,
\begin{equation}
	\label{eq:hess_prod}
	\det \cA_n^*(x)=\det\prod_{i=-n}^{-1}(I_d+D^2(F+\psi)(x^*_i))
\end{equation}

\begin{proof}[Proof of Proposition~\ref{prop:Hessian-bound}]
  Denote $\mu=\det\left(I_d+D^2(F+\psi)(0)\right)$, we have $\mu>1$. Since $x_{-n}^*$ converge to $0$ exponentially fast as $n\to\infty$, there exists $N>0$
  and constants $C_1>0$ such that for all $n \in \N$, we have
  \[
    \left|\det\left(I_d+D^2(F+\psi)(x_{-n}^*)\right)-\mu\right|<\frac{C_1}{n},
  \]
  which implies
  \[
(\mu-\frac{C_1}{n})^n<\det\cA_n^*(x)<(\mu+\frac{C_1}{n})^n.
\]

Therefore,
\[
  e^{-C_1\mu}\mu^n<\det\cA^*_n(x)<e^{C_1\mu}\mu^n
\]
\end{proof}

To prove Proposition \ref{prop:min_eigenvalue}, we need the following classical result, see \cite{Bellman97}, for example.
\begin{lemma}[Poincar\'{e} Seperation Theorem]
\label{lem:seperation}
Let $A$ be an $n\times n$ symmetric matrix, $(u_1,\cdots,u_r)$ be an orthonormal set in $\R^n$, $r<n$. Define $B=(u_i^TAu_j)$. Let
\begin{equation}
	\label{eq:31}
	\lambda_1(A)\leq\lambda_2(A)\leq\cdots\leq\lambda_n(A)
\end{equation}
be the eigenvalues of $A$ and let
\begin{equation}
	\label{eq:32}
	\lambda_1(B)\leq\cdots\leq\lambda_r(B)
\end{equation}
be the eigenvalues of $B$. Then
\begin{equation}
	\label{eq:33}
	\lambda_k(A)\leq\lambda_k(B)\leq\lambda_{k+n-r}(A),\quad 1\leq k\leq r.
\end{equation}
\end{lemma}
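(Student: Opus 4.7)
The plan is to deduce both inequalities from the Courant--Fischer min--max characterization of eigenvalues, exploiting the fact that $B$ is simply the compression of $A$ to the $r$-dimensional subspace $U := \Span\{u_1,\dots,u_r\} \subset \R^n$.

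First I would record the isometric identification between $\R^r$ and $U$. For $w=(w_1,\dots,w_r)^T \in \R^r$, set $v = \sum_{i=1}^r w_i u_i \in U$. Orthonormality of the $u_i$ yields $\|v\|^2 = \|w\|^2$, while
\[
v^T A v \;=\; \sum_{i,j=1}^r w_i w_j\, u_i^T A u_j \;=\; w^T B w.
\]
Thus $w \mapsto v$ is a Rayleigh-quotient-preserving isometry which sends $k$-dimensional subspaces of $\R^r$ bijectively onto $k$-dimensional subspaces of $U$.

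Next, for the lower inequality $\lambda_k(A) \le \lambda_k(B)$ I invoke the standard min--max formulas
\[
\lambda_k(A) \;=\; \min_{\substack{V \subset \R^n\\ \dim V = k}}\;\max_{\substack{v \in V\\ \|v\|=1}} v^T A v,
\qquad
\lambda_k(B) \;=\; \min_{\substack{W \subset \R^r\\ \dim W = k}}\;\max_{\substack{w \in W\\ \|w\|=1}} w^T B w.
\]
By the identification above, each $k$-dimensional $W \subset \R^r$ corresponds to a $k$-dimensional $V \subset U \subset \R^n$ with the same maximum Rayleigh quotient. The outer minimum defining $\lambda_k(B)$ is therefore taken over a \emph{smaller} family of subspaces (those contained in $U$) than the one defining $\lambda_k(A)$, so $\lambda_k(A) \le \lambda_k(B)$.

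For the upper inequality $\lambda_k(B) \le \lambda_{k+n-r}(A)$ I use the dual Courant--Fischer formula
\[
\lambda_j(A) \;=\; \max_{\substack{V \subset \R^n\\ \dim V = n-j+1}}\;\min_{\substack{v \in V\\ \|v\|=1}} v^T A v.
\]
Setting $j = k+n-r$ gives the dimension $n-j+1 = r-k+1$, which is exactly the dimension appearing in the analogous dual formula $\lambda_k(B) = \max_{\dim W = r-k+1}\min_{w}\,w^T B w / \|w\|^2$. Restricting the outer maximum defining $\lambda_{k+n-r}(A)$ to subspaces $V \subset U$ (which via the identification correspond to all subspaces $W \subset \R^r$ of dimension $r-k+1$) can only decrease its value, yielding $\lambda_k(B) \le \lambda_{k+n-r}(A)$. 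The argument is classical and presents no real obstacle; the only care required is in tracking dimensions in the dual min--max formula, so that $n-j+1 = r-k+1$ lines up correctly.
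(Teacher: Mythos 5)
Your argument is correct: the isometric identification $w\mapsto\sum_i w_i u_i$ does preserve norms and Rayleigh quotients, the min--max step gives $\lambda_k(A)\le\lambda_k(B)$, and the dual max--min step with $j=k+n-r$ (so that the subspace dimension $n-j+1=r-k+1$ matches on both sides) gives $\lambda_k(B)\le\lambda_{k+n-r}(A)$. The paper does not prove this lemma at all --- it simply cites it as the classical Poincar\'e separation theorem (referring to Bellman's book) --- and your Courant--Fischer compression argument is exactly the standard textbook proof of that result, so it correctly supplies what the paper delegates to the reference.
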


\begin{proof}[Proof of Proposition~\ref{prop:min_eigenvalue}]

There exists $\delta>0$ such that $D^2F(0)\geq\delta$. Since $F\in C^3$ and $\lim_{n\to\infty}x^*_{-n}\to 0$, there exists some $N$ such that
$D^2F(x^*_{-n})\geq\frac{\delta}{2}$ for $n>N$.

Denote
\begin{equation}
	\cA_n=\cA_{n}^{(j)}=
	\begin{bmatrix}
	A_{n} & -I_d &  & & & \\
	-I_d & A_{n-1} & -I_d & & & \\
	& \ddots & \ddots & \ddots & \\
	&        & -I_d & A_{2} & -I_d \\
	&        &    &   -I_d & A_1 \\ 
	\end{bmatrix}
\end{equation}  
and
\begin{equation}
	B_{N,n}=
	\begin{bmatrix}
	A_{n} & -I_d &  & & & \\
	-I_d & A_{n-1} & -I_d & & & \\
	& \ddots & \ddots & \ddots & \\
	&        & -I_d & A_{N+2} & -I_d \\
	&        &    &   -I_d & A_{N+1} \\ 
	\end{bmatrix}
\end{equation}
Then we have
\begin{equation}
	\label{eq:38}
	B_{N,n}\geq     \begin{bmatrix}
	(2+\frac{\delta}{2})I_d & -I_d &  & & & \\
	-I_d & (2+\frac{\delta}{2})I_d & -I_d & & & \\
	& \ddots & \ddots & \ddots & \\
	&        & -I_d & (2+\frac{\delta}{2})I_d & -I_d \\
	&        &    &   -I_d & (2+\frac{\delta}{2})I_d \\ 
	\end{bmatrix}
\end{equation}
As a result, the minimum eigenvalue of $B_{N,n}$ is bounded below by a constant independent of $n$.

In the follwing, we use $C(N,d)$  to denote any constant that depends on $N$ and $d$ but does not depend on $n$.

By \eqref{eq:hess_prod}, we have
\begin{equation}
	\label{eq:34}
	\frac{\det\cA_n}{\det B_{N,n}}=\det\prod_{i=-N}^{-1}(I_d+D^2F(x_i^{(j)})+D^2\psi(x_i^{(j)})),
\end{equation}
which is bounded below and above by a constant independent of $n$.

Let $\lambda_1\leq\cdots\leq\lambda_{nd}$ be the eigenvalues of $\cA_{n}$ and let $\mu_1\leq\cdots\leq\mu_{(n-N)d}$ be the eigenvalues of $B_{N,n}$. 
By \eqref{eq:34}, we have
\begin{equation}
	\label{eq:36}
	\frac{\prod_{i=1}^{nd}\lambda_i}{\prod_{i=1}^{(n-N)d}\mu_i}=\frac{\det\cA_n}{\det B_{N,n}}\geq C(N,d).
\end{equation}
By Lemma \ref{lem:seperation}, we have
\begin{equation}
	\label{eq:35}
	\lambda_k\leq\mu_k\leq\lambda_{k+Nd}.
\end{equation}
Therefore
\begin{equation}
	\label{eq:37}
	\lambda_1\geq C(N,d)\frac{\prod_{i=q}^{(n-N)d}\mu_i}{\prod_{i=2}^{nd}\lambda_j}\geq C(N,d)\frac{\mu_1}{\prod_{i=(n-N)d+1}^{nd}\lambda_j}
\end{equation}
Since $\mu_1$ is bounded from below and $\lambda_i$'s are bounded from above by constants independent of $n$, we have
\begin{equation}
	\label{eq:39}
	\lambda_1\geq C(N,d). \qedhere
\end{equation}
\end{proof}

\section{Laplace's method for the partition function}
\label{sec:laplace}

In this section, we prove Proposition~\ref{prop:laplace}, which establishes the estimate 
\[
C^{-1} \le \frac{\tL_\nu^n \bOne(x)}{Q_n} \le C \chi_\nu(x)
\]
for $0 \le n \le N_1(\nu)$, where $\chi_\nu(x) = 1$ for $x \in U$ and $\nu^{-d/2}$ for $x \notin U$. The plan of this section is as follows: 
\begin{itemize}
 \item We first prove some technical lemmas on the function $H_{n, x}$ (Lemma~\ref{lem:H-integrable} to \ref{lem:perturbed_hessian});  
 \item We then prove Proposition~\ref{prop:U-laplace} which establishes the estimate for $x \in U$; 
 \item After that we give the proof of Proposition~\ref{prop:laplace}.  
\end{itemize}

\begin{lemma}\label{lem:H-integrable}
There is a constant $C > 0$ depending only on $F$  such that
\[
	\int e^{-H_{n,x}(X)} dx_{-n} \cdots dx_{-1} \le C^{nd}. 
\]
\end{lemma}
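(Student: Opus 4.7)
The plan is to exploit the non-negativity of both $F$ and $\psi$ to reduce the integral to a pure Gaussian one, via a telescoping change of variables.

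First I would write
\[
	H_{n,x}(X) = \sum_{i=-n}^{-1} \Bigl(\tfrac12 |x_i - x_{i+1}|^2 + F(x_i)\Bigr) + \psi(x_{-n}) - \psi(x),
\]
where $x_0 = x$. By Assumption~\ref{as:F} we have $F \ge 0$ and by Proposition~\ref{prop:weak-KAM} we have $\psi \ge 0$. Using these, together with continuity of $\psi$ on $\T^d$ (so $\psi(x) \le \|\psi\|_{C^0} =: M$),
\[
	e^{-H_{n,x}(X)} \le e^{\psi(x)} \prod_{i=-n}^{-1} e^{-\frac12 |x_i - x_{i+1}|^2} \le e^{M} \prod_{i=-n}^{-1} e^{-\frac12 |x_i - x_{i+1}|^2}.
\]

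Next I would change variables via $y_i = x_i - x_{i+1}$ for $i = -n, \ldots, -1$. This is an upper triangular linear bijection of $\R^{nd}$ with unit Jacobian (solving recursively, $x_{-1} = x - y_{-1}$ and then $x_{i} = x_{i+1} + y_i$). Under this change of variables, the factors decouple:
\[
	\int \prod_{i=-n}^{-1} e^{-\frac12 |x_i - x_{i+1}|^2}\,dx_{-n} \cdots dx_{-1} = \prod_{i=-n}^{-1} \int_{\R^d} e^{-\frac12 |y_i|^2}\,dy_i = (2\pi)^{nd/2}.
\]

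Combining the two displays gives
\[
	\int e^{-H_{n,x}(X)}\,dx_{-n} \cdots dx_{-1} \le e^{M} (2\pi)^{nd/2},
\]
and choosing $C > 0$ sufficiently large (depending only on $F$, through $M$ and $\sqrt{2\pi}$) yields the desired bound $C^{nd}$ for all $n \ge 1$; the $n=0$ case is the empty product. There is no real obstacle here — the lemma is a soft upper bound whose whole content is that the quadratic cost in $h$ dominates, so the positive contributions of $F$ and $\psi$ can simply be discarded before applying Fubini to an explicit Gaussian.
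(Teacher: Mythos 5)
Your proof is correct and follows essentially the same route as the paper: discard the potential contributions, leaving a pure Gaussian in the increments $x_i - x_{i+1}$, and integrate via the telescoping change of variables. The only cosmetic difference is that you exploit $F \ge 0$ and $\psi(x_{-n}) \ge 0$ to drop those terms outright, keeping just the $e^{\|\psi\|_{C^0}}$ prefactor, whereas the paper bounds everything crudely by $e^{n(\|F\|_{C^0}+\|\psi\|_{C^0})}$; both are absorbed into $C^{nd}$, so the conclusion is identical.
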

\begin{proof}
Let $C_0 = \|F\|_{C^0} + \|\psi\|_{C^0}$, then 
\[
\begin{aligned}
& \int e^{-H_{n, x}(X)} dX \\
& = \int \exp \left( -\sum_{k = -n}^{-1} (\frac12 |x_{k + 1} - x_k|^2 + F(x_k)) - \psi(x_{-n} + \psi(x_0))\right) dx_{-n} \cdots dx_{-1}  \\
& \le e^{n C_0} \int \exp \left( - \frac12 \sum_{k = -n}^{-1} \frac12 |x_{k + 1} - x_k|^2 )\right) dx_{-n} \cdots dx_{-1} = e^{nC_0} (2\pi)^{nd/2}. 
\end{aligned}
\]
The lemma follows by setting $C = e^{C_0}(2\pi)^{d/2}$. 
\end{proof}

\begin{lemma}\label{lem:int-shift-lower}
There exists $\delta > 0$ depending only on $F$ such that for all nonzero $l \in \Z^{nd}$, $n\in\N$ and $x \in U$, 
\[
	H_{n, x}(X^*(x) + l)  - H_{n, x}(X^*(x)) = H_{n, x}(X^*(x) + l) > \delta. 
\]
\end{lemma}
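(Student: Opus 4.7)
The plan is to recognize $H_{n,x}$ as a telescoping sum of the auxiliary kernel $\tlh$. First I would rewrite $h(y,x) = \tlh(y,x) - \psi(y) + \psi(x)$, so that the boundary terms $\psi(x_{-n}) - \psi(x_0)$ appearing in the definition of $H_{n,x}$ are absorbed exactly by the telescoping of the $\psi$-differences. The outcome is
\[
  H_{n,x}(X) \;=\; \sum_{i=-n}^{-1} \tlh(x_i, x_{i+1}),
\]
a sum of non-negative terms because $\psi$ is a weak KAM solution. Since $\bary(x_{i+1}^*) = x_i^*$ and $\tlh(\bary(z), z) = 0$ for $z \in U$, each term vanishes along the minimizer, so in particular $H_{n,x}(X^*) = 0$. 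This reduces the statement to a uniform positive lower bound on $H_{n,x}(X^* + l)$.

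Second, I would exploit the $\Z^d$-periodicity of $U$ and of the map $\bary$, recorded in Section~\ref{sec:var}. Writing $l = (l_{-n},\dots,l_{-1})$, setting $l_0 := 0$, and introducing the discrete differences $m_i := l_{i+1} - l_i \in \Z^d$, the relation $\bary(z + k) = \bary(z) + k$ gives
\[
  \bary(x_{i+1}^* + l_{i+1}) \;=\; x_i^* + l_{i+1},
\qquad
  x_i^* + l_i - \bary(x_{i+1}^* + l_{i+1}) \;=\; -m_i.
\]
Since $x_{i+1}^* + l_{i+1}$ still lies in the periodic set $U$, Corollary~\ref{cor:tlh-lower} applies and yields $\tlh(x_i^* + l_i, x_{i+1}^* + l_{i+1}) \ge \delta |m_i|^2$ for every $i$. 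Summing over $i$ produces $H_{n,x}(X^* + l) \ge \delta \sum_{i=-n}^{-1} |m_i|^2$.

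To close the argument, I would note that $l \ne 0$ forces at least one $m_i$ to be nonzero, since otherwise $l_i = l_0 = 0$ for every $i$; and because each $m_i \in \Z^d$, a single nonzero $m_i$ already contributes at least $1$ to the sum. Consequently $H_{n,x}(X^* + l) - H_{n,x}(X^*) \ge \delta$ uniformly in $n$, $x \in U$, and nonzero $l$. I do not anticipate a serious obstacle here: once the identity $H_{n,x} = \sum \tlh$ is established, the remainder is bookkeeping. The only point requiring a moment of care is verifying that the shifted points $x_i^* + l_i$ stay inside the open set $U$ on which the quadratic bound of Proposition~\ref{prop:U-non-deg} is available, which follows immediately from $\cD^-(\psi) \subset U$ together with the periodicity of $U$.
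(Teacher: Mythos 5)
Your proof is correct. Both your argument and the paper's begin from the same telescoping identity $H_{n,x}(X) = \sum_{i=-n}^{-1}\tlh(x_i,x_{i+1})$, but the way the uniform lower bound is extracted is genuinely different. The paper singles out $j = \max\{k : l_k \ne 0\}$, notes that $l_{j+1}=0$ so the $j$-th term is $\tlh(\bary(x_{j+1}^*)+l_j, x_{j+1}^*)$, drops all other (nonnegative) terms, and then invokes a compactness bound $\tlh(\bary(z)+q,z) > \delta$ for $z\in\overline U$ and nonzero $q\in\Z^d$. You instead work with the discrete differences $m_i = l_{i+1}-l_i$, use periodicity of $\bary$ to identify $x_i^*+l_i - \bary(x_{i+1}^*+l_{i+1}) = -m_i$, and apply the quadratic estimate of Corollary~\ref{cor:tlh-lower} term by term, giving $H_{n,x}(X^*+l) \ge \delta\sum_i|m_i|^2$; the constraint $l_0=0$, $l\ne 0$ forces at least one $m_i\ne 0$, and $m_i\in\Z^d$ gives $|m_i|^2\ge 1$. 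Your version buys a quantitatively stronger bound (it grows with the total squared variation of the shift rather than just giving a constant) and avoids a separate compactness step by reusing Corollary~\ref{cor:tlh-lower} directly; the paper's version is marginally shorter since it needs only one term. Both correctly use periodicity of $U$ and of $\bary$ and the inclusion $\cD^-(\psi)\subset U$ to ensure the base point of each $\tlh$ remains in $U$.
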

\begin{proof}
By the definition of $U$ (see Proposition~\ref{prop:U-non-deg}), for every $x \in \overline{U}$, the minimum $h(\cdot, x)$ is achieved at a unique point $\bar{y}(x)$. As a result, there exists $\delta > 0$ such that 
\[
h(\bar{y}(x) + q, x) > \delta, \quad \text{ for all nonzero } q \in \Z^d. 
\]

Let $j = \max\{k \st l_k \ne 0, \, -n \le k \le -1\}$, then 
\[
\begin{aligned}
H_{n, x}(X^*(x) + l)
& = \sum_{k = -n}^{-1} \tilde{h}(x_k^* + l_k, x_{k+1}^* + l_{k + 1})
\ge \sum_{k = j}^{-1} \tilde{h}(x_k^* + l_k, x_{k+1}^* + l_{k + 1}) \\
& = \tlh(x_j^* + l_j, x_{j+1}^*) > \delta
\end{aligned}
\]
where we used $\tlh \ge 0$ and $x_j^* = \bar{y}(x_{j+1}^*)$. 
\end{proof}

\begin{lemma}\label{lem:H-inf-lower}
For any $r>0$, there exists $\delta > 0$ depending only on $r$ and $F$ such that
\[
	\inf\{H_{n, x}(X) \st x \in U, \, \|X - X^*(x)\|_\infty > r, \, n \in \N \} > \delta > 0. 
\]
\end{lemma}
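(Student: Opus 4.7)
The plan is to rewrite $H_{n, x}$ as a telescoping sum of non-negative terms and handle ``small'' and ``large'' deviations from $X^{*}(x)$ by separate arguments. Using $\tlh(y, z) = h(y, z) + \psi(y) - \psi(z)$,
\[
H_{n, x}(X) = \sum_{k = -n}^{-1} \tlh(x_k, x_{k+1}), \qquad x_0 = x \in U,
\]
each summand being non-negative with unique zero at $x_k = \bary(x_{k+1})$; in particular $H_{n, x}(X^{*}(x)) = 0$.

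First I would pick $R_0 > 0$ depending only on $F$ so that the $R_0$-neighborhood of the starred orbit $\{x_k^{*}\}_{k \le 0}$ sits inside $U$ for every $x \in \overline{U}$; this is possible because the starred orbit lies in $\cD^{-}(\psi)$, which sits at positive distance from $\partial U$ by the construction in Proposition~\ref{prop:U-non-deg}. On the region $\|Y\|_\infty \le R_0$ (with $Y = X - X^{*}$), the Hessian $\cA_{n, x}(X^{*} + sY)$ differs from $\cA_{n, x}^{*}$ only in the block-diagonal entries $D^{2} F(x_k^{*} + s y_k) - D^{2} F(x_k^{*})$ and the analogous $D^{2}\psi$ term at the left endpoint (controlled since $\psi \in C^{3}$ on $U$). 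Its operator norm is bounded by $C_1 \|Y\|_\infty$, so by shrinking $R_0$ we may arrange $C_1 R_0 \le (2C)^{-1}$ with $C$ from Proposition~\ref{prop:min_eigenvalue}. Second-order Taylor expansion at the critical point $X^{*}$ then yields
\[
H_{n, x}(X^{*} + Y) = \int_0^{1}(1 - s)\, Y^{T} \cA_{n, x}(X^{*} + sY)\, Y\, ds \ge \frac{1}{4C}\|Y\|_2^{2} \ge \frac{1}{4C}\|Y\|_\infty^{2},
\]
valid for $\|Y\|_\infty \le R_0$. This settles the lemma for $r \in (0, R_0]$ with $\delta = r^{2}/(4C)$, and reduces the remaining range $r > R_0$ to proving a uniform lower bound $H_{n, x}(X) \ge \delta_0 > 0$ whenever $\|X - X^{*}\|_\infty > R_0$.

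For the large-deviation bound I would argue by contradiction: suppose $(n_j, x_j, X_j)$ satisfy $H_{n_j, x_j}(X_j) \to 0$ and $\|X_j - X^{*}(x_j)\|_\infty > R_0$. Passing to a subsequence, $x_j \to x^{\infty} \in \overline{U}$. Since each $\tlh(x_{k, j}, x_{k+1, j}) \to 0$, Corollary~\ref{cor:tlh-lower} and continuity of $\bary$ on $U$ propagate inductively to give $x_{k, j} \to x_k^{*}(x^{\infty})$ for every fixed $k$, while continuity of the backward orbit in the initial datum yields $x_{k, j}^{*} = x_k^{*}(x_j) \to x_k^{*}(x^{\infty})$ as well. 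Hence $|x_{k, j} - x_{k, j}^{*}| \to 0$ for each fixed $k$. Letting $k_{1, j}$ denote the largest index at which $|x_{k_{1, j}, j} - x_{k_{1, j}, j}^{*}| > R_0$, the case of bounded $k_{1, j}$ is an immediate contradiction.

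The main obstacle is the tail case $k_{1, j} \to -\infty$, where the single-term estimate of Corollary~\ref{cor:tlh-lower} is not directly usable (the Lipschitz constant of $\bary$ on $U$ need not be less than one). I would resolve it via the Lyapunov property of Proposition~\ref{prop:hyp}(4): combining $\psi(\bary(z)) \le \kappa^{2}\psi(z)$ with the Lipschitz estimate for $\sqrt{\psi}$ from Proposition~\ref{prop:hyp}(2) and Corollary~\ref{cor:tlh-lower} gives, while $x_{k+1, j} \in U$,
\[
\sqrt{\psi(x_{k, j})} \le \kappa\, \sqrt{\psi(x_{k+1, j})} + C_2\, \sqrt{\tlh(x_{k, j}, x_{k+1, j})}.
\]
Iterating this recursion backward from $k = 0$ and applying Cauchy--Schwarz to the error sum yields $\psi(x_{k_{1, j}, j}) \le 2 \kappa^{2|k_{1, j}|} \psi(x_j) + C_3 H_{n_j, x_j}(X_j) \to 0$. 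The quadratic lower bound $\psi(z) \ge c|z|_\T^{2}$ from Proposition~\ref{prop:hyp}(2) then forces $|x_{k_{1, j}, j}|_\T \to 0$; since $|x_{k_{1, j}, j}^{*}|_\T \to 0$ as well, and lattice ambiguity is excluded by choosing $R_0 < 1/2$, we deduce $|x_{k_{1, j}, j} - x_{k_{1, j}, j}^{*}| \to 0$ in $\R^{d}$, contradicting the definition of $k_{1, j}$.
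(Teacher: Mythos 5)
Your proof takes a genuinely different route from the paper's. The paper reduces to the torus case via Lemma~\ref{lem:int-shift-lower}, then splits on whether the first deviating index $m$ exceeds a threshold $N$, using compactness for $m<N$ and the convergence $\|T^n\bOne - \psi\|_* \to 0$ together with $\psi(x_{-m})\ge 2\delta_1$ for $m\ge N$. You instead combine a quantitative Taylor bound near $X^*$ (exploiting Proposition~\ref{prop:min_eigenvalue}) for small deviations with a compactness/contradiction argument for large deviations, closed via a Lyapunov recursion on $\sqrt{\psi}$. Your scheme is more quantitative in the small-$r$ regime ($\delta = r^2/(4C)$, which the paper's argument does not give), and the per-index propagation plus the $\kappa$-contraction of $\sqrt\psi$ is a nice reuse of Proposition~\ref{prop:hyp}(4). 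The small-$r$ Taylor part, the per-index convergence, the case of bounded $k_{1,j}$, and the Cauchy--Schwarz step in the Lyapunov recursion are all correct (and the potentially worrying point that intermediate $x_{k,j}$ might leave $U$ is resolved by Corollary~\ref{cor:tlh-lower}: $H_{n_j,x_j}(X_j)<\delta$ forces every $x_{k,j}\in U$).

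However, the final step of the tail case has a genuine gap. You derive $|x_{k_{1,j},j}|_\T\to 0$ and $|x^*_{k_{1,j},j}|_\T\to 0$, and then assert that ``lattice ambiguity is excluded by choosing $R_0<1/2$'' to conclude $|x_{k_{1,j},j}-x^*_{k_{1,j},j}|\to 0$ in $\R^d$. This implication does not hold as stated: two points whose $\T^d$-norms tend to $0$ can be near \emph{distinct} lattice points, so their $\R^d$-distance could tend to $1$, which is compatible with $>R_0$ and thus gives no contradiction. The constraint $R_0<1/2$ does not by itself select the same lattice representative for both sequences. The argument is salvageable, but the missing ingredient must be supplied: use the maximality of $k_{1,j}$ to get $|x_{k_{1,j}+1,j}-x^*_{k_{1,j}+1,j}|\le R_0<1/2$, apply your Lyapunov bound at $k_{1,j}+1$ (and the convergence of the starred orbit) to conclude that both $x_{k_{1,j}+1,j}$ and $x^*_{k_{1,j}+1,j}$ lie near the \emph{same} lattice point $p$, and then take one backward step: $|x_{k_{1,j},j}-x^*_{k_{1,j},j}|\le |x_{k_{1,j},j}-\bary(x_{k_{1,j}+1,j})| + |\bary(x_{k_{1,j}+1,j})-\bary(x^*_{k_{1,j}+1,j})|$, where the first term is $O(\sqrt{H_{n_j,x_j}})\to 0$ by Corollary~\ref{cor:tlh-lower} and the second is $\le \kappa\,|x_{k_{1,j}+1,j}-x^*_{k_{1,j}+1,j}|\le\kappa R_0$ because $\bary$ contracts near the fixed point $p$ (Proposition~\ref{prop:hyp}(4)). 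This yields $|x_{k_{1,j},j}-x^*_{k_{1,j},j}|<R_0$ for $j$ large, which is the desired contradiction. Without this one-step propagation, the proof does not close.
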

\begin{proof}
Write $\|X\|_{\T, \infty} = \sup_{k = 1}^n |x_{-k}|_\T$. By Lemma~\ref{lem:int-shift-lower}, it suffices to prove 
\[
	\inf\{H_{n, x}(X) \st x \in U, \, \|X - X^*(x)\|_{\T, \infty} > r, \, n \in \N \} > \delta. 
\]
First of all, there exists $\delta_1 > 0$ such that  $\inf_{|x|_\T > r}\psi(x) \ge 2\delta_1$ for all $|x| > r$. Secondly, by Proposition~\ref{prop:weak-KAM},  $\|T^n \bOne - \psi\|_* \to 0$ as $n \to \infty$, therefore there exists $N \in \N$ such that for all $n \ge N$, 
\[
	\|T^n \bOne - \psi\|_* < \delta_1. 
\]
Thirdly, since $H_{N, x}(X)$ has a unique minimum at $X^*$,  by a compactness argument, for a fixed $N$, there exists $\delta_2> 0$ depending on $N$ such that 
\[
	\inf \{  H_{N, x}(X) \st  x \in U, \,  \|X - X^*\|_{\T, \infty} > r, \, X \in (\R^d)^N\} > \delta_2. 
\]

Suppose $\|X - X^*\|_{\T, \infty} > r$, then there exists $1 \le m \le n$ such that $|x_{-m} - x_{-m}^*|_\T > r$. If $m < N$, we write $X_{-N}^{-1} = (x_k)_{k = -N}^{-1}$, then $\|X_{-N}^{-1} - X^*\|_{\T, \infty} > r$, hence
\[
	H_{n, x}(X) = \sum_{k = -N}^{-1} \tlh(x_k, x_{k+1}) + \sum_{k = -n}^{-N-1} \tlh(x_k, x_{k+1}) \ge H_{N, x}(X_{-N}^{-1}) > \delta_2
\]
since $\tlh \ge 0$. If $m \ge N$, then 
\[
	\begin{aligned}
	H_{n, x}(X) & = -\psi(x) + \sum_{k = -m}^{-1} h(x_k, x_{k+1}) + \sum_{k = -n}^{-m-1} h_{x_k, x_{k+1}} + \psi(x_{-n}) \\
	& \ge - \|T^m \bOne - \psi\|_* + \psi(x_{-m}) > 2\delta_1 - \delta_1 > \delta_1.
	\end{aligned} 
\] 
Take $\delta = \min\{\delta_1, \delta_2\}$ and the lemma follows. 
\end{proof}

\begin{lemma}
\label{lem:perturbed_hessian}
Assume $\cA_n$ is a symmetric and positive definite $nd \times nd$ matrix,  and assume there is a uniform lower bound $\lambda_{min}>0$ of the smallest eigenvalue. Suppose $C \mu_{min} > 1$ and $0 < \epsilon < 1/(Cn)$, then
\begin{equation}
	\label{eq:27}
	\frac{\det (\cA_n-\epsilon I_{nd})}{\det \cA_n}\geq (1-\frac{1}{C\mu_{min}})^d\ \mathrm{and}\ \frac{\det (\cA_n+\epsilon I_{nd})}{\det \cA_n}\leq e^{\frac{d}{C\mu_{min}}}. 
\end{equation}
\end{lemma}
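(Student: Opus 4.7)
The proof is essentially a one-variable calculation after diagonalization, so the plan is short.

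Since $\cA_n$ is symmetric positive definite, I would diagonalize it and write $\lambda_1,\ldots,\lambda_{nd}$ for its eigenvalues, each bounded below by $\lambda_{min}=\mu_{min}$. Then $\cA_n\pm\epsilon I_{nd}$ has eigenvalues $\lambda_i\pm\epsilon$, so
\[
\frac{\det(\cA_n\pm\epsilon I_{nd})}{\det\cA_n}=\prod_{i=1}^{nd}\left(1\pm\frac{\epsilon}{\lambda_i}\right).
\]
The hypothesis $\epsilon<1/(Cn)$ together with $\lambda_i\ge\mu_{min}$ gives $\epsilon/\lambda_i\le 1/(Cn\mu_{min})$, which is $<1$ by the assumption $C\mu_{min}>1$; so all factors in the product are positive and close to one.

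For the upper bound I would use $1+x\le e^{x}$ termwise:
\[
\prod_{i=1}^{nd}\left(1+\frac{\epsilon}{\lambda_i}\right)\le\exp\!\left(\sum_{i=1}^{nd}\frac{\epsilon}{\lambda_i}\right)\le\exp\!\left(\frac{nd\,\epsilon}{\mu_{min}}\right)\le\exp\!\left(\frac{d}{C\mu_{min}}\right),
\]
the last step from $nd\epsilon\le d/C$. For the lower bound the naive estimate $\prod(1-a_i)\ge 1-\sum a_i$ would give $1-d/(C\mu_{min})$, which is strictly smaller than the target $(1-1/(C\mu_{min}))^d$; to recover the claimed bound I would first estimate termwise $\epsilon/\lambda_i\le 1/(Cn\mu_{min})$ and then apply Bernoulli's inequality twice:
\[
\prod_{i=1}^{nd}\left(1-\frac{\epsilon}{\lambda_i}\right)\ge\left(1-\frac{1}{Cn\mu_{min}}\right)^{nd}=\left[\left(1-\frac{1}{Cn\mu_{min}}\right)^{n}\right]^{d}\ge\left(1-\frac{1}{C\mu_{min}}\right)^{d}.
\]

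The mild subtlety — the only one really — is precisely this last step: one has to resist applying Bernoulli to the full exponent $nd$ at once (which loses the geometry) and instead peel off $n$ first to turn the $nd$ small factors into $d$ factors of the right size. Once this is set up, both estimates are immediate and independent of $n$, as required.
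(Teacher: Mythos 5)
Your proof is correct and takes essentially the same route as the paper's: diagonalize $\cA_n$, write the determinant ratio as a product of eigenvalue ratios, bound each factor by $1\pm \epsilon/\mu_{min}\le 1\pm 1/(Cn\mu_{min})$, and then apply $1+x\le e^x$ for the upper bound and Bernoulli on the $n$-th power (not the $nd$-th) for the lower bound. Your explicit remark about why one must peel off the $n$ first is exactly the content of the paper's step $(1-\frac{1}{Cn\mu_{min}})^{dn}\ge(1-\frac{1}{C\mu_{min}})^{d}$.
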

\begin{proof}
For each $\cA_n$, there exists invertible matrix $E_n$ such that
\begin{equation}
	\label{eq:28}
	B_n=E^{-1}_n\cA_nE_n
\end{equation}
is diagonal.  Denote $B_n=\mathrm{diag}(b_{n,j})_{1\leq j\leq nd}$. Then for $\epsilon<\frac{1}{Cn}$, we have
\begin{equation}
	\label{eq:29}
\begin{aligned}
	\frac{\det(\cA_n-\epsilon I_{nd})}{\det \cA_n} 
	&= \frac{\det(B_n-\epsilon I_{nd})}{\det B_n}
	=\frac{\prod_{j=1}^{nd}(b_{n,j}-\epsilon)}{\prod_{j=1}^{nd}b_{n,j}} \\
	& \geq (1-\frac{1}{C\mu_{min}n})^{dn}
	 \geq (1-\frac{1}{C\mu_{min}})^d.
\end{aligned}
\end{equation}
The other inequality is similar.
\end{proof}

\begin{proposition}\label{prop:U-laplace}
Let $\mu$ be as in Proposition~\ref{prop:Hessian-bound}. 
There exist constants $\nu_0 > 0$ and $C > 1$ such that for all $n \le N_1(\nu) = C^{-1} (\nu \log \frac{1}{\nu})^{-\frac13}$, 
\[
	C^{-1} < \lambda^{-n} \tL_\nu^n \bOne(x)  \le C, \quad \text{ for all } x \in U.
\]
\end{proposition}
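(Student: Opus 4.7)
The plan is to apply Laplace's method directly to
\[
\tL_\nu^n \bOne(x) = (4\pi\nu)^{-nd/2}\int_{(\R^d)^n} e^{-H_{n,x}(X)/(2\nu)}\,dX,
\]
using that $H_{n,x}$ attains its global minimum $0$ at $X^*(x)$, the Hessian $\cA_{n,x}^*$ has uniformly bounded-below smallest eigenvalue (Proposition~\ref{prop:min_eigenvalue}), and $\det \cA_{n,x}^* \sim \mu^n$ (Proposition~\ref{prop:Hessian-bound}). The expected leading contribution is therefore $(\det \cA_{n,x}^*)^{-1/2} \sim \mu^{-n/2}$, matching the claim (with $\lambda$ equal to the square root of the $\mu$ of Proposition~\ref{prop:Hessian-bound}). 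First I fix a cutoff $r_n$ with $\sqrt{\nu \log n} \ll r_n \ll (\nu/n)^{1/3}$; for $n \le N_1(\nu) = C^{-1}(\nu\log(1/\nu))^{-1/3}$ one checks that this range is nonempty (squaring the upper bound and cubing the lower gives $n^2 \log^3 n \lesssim 1/\nu$, which follows from the bound on $n$ after absorbing logs). I then split
\[
\tL_\nu^n \bOne(x) = I_{\mathrm{near}} + I_{\mathrm{far}}
\]
according to $\|X - X^*(x)\|_\infty \lessgtr r_n$.

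For $I_{\mathrm{near}}$, I Taylor expand $H_{n,x}(X^* + Y) = \tfrac12 Y^T \cA_{n,x}^* Y + R_n(Y)$. The key point is that the banded structure of $H_{n,x}$ (each variable $x_k$ appears in only two consecutive terms $h(x_{k-1},x_k),h(x_k,x_{k+1})$, and the cubic parts come only from $D^3F(x_k)$ together with the $D^3\psi$ at $x_{-n}$, which is bounded on $U$ by Proposition~\ref{prop:hyp}(2)) yields the sharp bound $|R_n(Y)| \le C\sum_{k=-n}^{-1}|y_k|^3 \le Cn\|Y\|_\infty^3$. Our choice of $r_n$ gives $|R_n|/(2\nu) \le C'$ uniformly on the near region, so $e^{-R_n/(2\nu)}$ is bounded between two positive constants and
\[
I_{\mathrm{near}} \asymp (4\pi\nu)^{-nd/2}\int_{\|Y\|_\infty < r_n} e^{-Y^T \cA_{n,x}^* Y /(4\nu)}\,dY.
\]
Using $\cA_{n,x}^* \ge C^{-1}I_{nd}$ and a union-bound Gaussian tail estimate over the $nd$ coordinate directions, the truncation outside $\|Y\|_\infty < r_n$ costs a factor $1 + O(nd\, e^{-c r_n^2/\nu})$, which is $1 + o(1)$ by the lower constraint on $r_n$. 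Thus $I_{\mathrm{near}} \asymp (\det \cA_{n,x}^*)^{-1/2} \asymp \mu^{-n/2}$ by Proposition~\ref{prop:Hessian-bound}.

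For $I_{\mathrm{far}}$ I further split at $\|X - X^*\|_\infty = r_0$, where $r_0 \in (0,\tfrac12)$ is the small fixed radius of Lemma~\ref{lem:H-inf-lower}. On $r_n < \|X - X^*\|_\infty < r_0$, the Taylor estimate still controls the cubic remainder by $\tfrac14 Y^T\cA^*Y$, so $H_{n,x} \ge \tfrac14 Y^T\cA^*Y \ge c\|Y\|^2$ and the contribution is bounded by a Gaussian tail, hence negligible. On $\|X - X^*\|_\infty > r_0$, Lemma~\ref{lem:H-inf-lower} gives $H_{n,x} \ge \delta$, so
\[
I_{\mathrm{far}} \le (4\pi\nu)^{-nd/2}\, e^{-\delta/(4\nu)}\int e^{-H_{n,x}/(4\nu)}\,dX,
\]
and bounding the remaining integral by $e^{Cn/\nu}(8\pi\nu)^{nd/2}$ via the same reasoning as Lemma~\ref{lem:H-integrable} (pulling out the bounded potential contributions and doing the Gaussian part explicitly) yields $I_{\mathrm{far}} \le 2^{nd/2}e^{(Cn-\delta/2)/(2\nu)}$. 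This is negligible compared to $\mu^{-n/2}$ whenever $n \ll 1/\nu$, a fortiori for $n \le N_1(\nu)$. Combining the near and far estimates yields the two-sided bound.

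The main obstacle will be executing the two-sided quantitative Laplace estimate with error terms that survive the high-dimensional limit $nd \to \infty$. The tight interplay between the Gaussian-tail localization requirement $r_n \gg \sqrt{\nu \log n}$ and the cubic-error requirement $r_n \ll (\nu/n)^{1/3}$ is precisely what forces the scaling $n \le N_1(\nu) \sim (\nu\log\tfrac{1}{\nu})^{-1/3}$; pushing beyond this range would require either a sharper Taylor control or a matrix-level perturbation argument of the kind hinted at by Lemma~\ref{lem:perturbed_hessian}, which is needed precisely to convert the $e^{\pm R_n/(2\nu)}$ error into an acceptable multiplicative perturbation of $\det \cA_{n,x}^*$.
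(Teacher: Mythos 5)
Your overall strategy---write $\tL_\nu^n\bOne$ as the $nd$-dimensional Laplace integral \eqref{eq:tL-int}, Taylor expand $H_{n,x}$ around the minimizer $X^*$, split into a near region (quadratic approximation), an intermediate annulus, and a far region (Lemma~\ref{lem:H-inf-lower}), and use $\det\cA^*_{n,x}\asymp\mu^n$---is the same as the paper's. The gap is quantitative and lies in the choice of cutoff radius, and it prevents the argument from reaching the full range $n\le N_1(\nu)$.

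You choose $r_n$ with $\sqrt{\nu\log n}\ll r_n\ll(\nu/n)^{1/3}$, and on the near region you control the cubic remainder by $|R_n(Y)|\le Cn\|Y\|_\infty^3\lesssim\nu$, so that $e^{-R_n/(2\nu)}$ is two-sided bounded. This is fine for the near region. But on the intermediate annulus $r_n<\|Y\|_\infty<r_0$ the best you have is $H_{n,x}\ge\tfrac14 Y^T\cA^*Y$ (or, equivalently, $H_{n,x}\ge\tfrac12 Y^T(\cA^*-O(r_0)I)Y$), and either way the resulting Gaussian has a different covariance than the one giving the main term $\det(\cA^*)^{-1/2}$. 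Concretely, $(4\pi\nu)^{-nd/2}\int_{\|Y\|_\infty>r_n}\exp(-Y^T\cA^*Y/(8\nu))\,dY\le nd\,e^{-c r_n^2/\nu}\cdot 2^{nd/2}\det(\cA^*)^{-1/2}$; the tail probability $nd\,e^{-cr_n^2/\nu}$ must therefore kill a factor $2^{nd/2}$, which requires $r_n^2/\nu\gtrsim nd$, i.e. $r_n\gtrsim\sqrt{n\nu}$, not merely $r_n\gtrsim\sqrt{\nu\log n}$. (The same conclusion follows if you instead peel off $e^{-(1/2\nu-1)H}$ and use Lemma~\ref{lem:H-integrable}: you then need $r_n^2/\nu\gtrsim n\log(1/\nu)$.) Combining $r_n\gtrsim\sqrt{n\nu}$ with your upper constraint $r_n\lesssim(\nu/n)^{1/3}$ gives $n\lesssim\nu^{-1/5}$ (up to logarithms), which is strictly short of the claimed $N_1(\nu)\sim\nu^{-1/3}/\log^{1/3}(1/\nu)$.

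The paper escapes this by taking the cutoff at the \emph{larger} scale $r_1(n)\sim 1/n$, which is compatible with $r_1^2/\nu\gtrsim n\log(1/\nu)$ precisely up to $n\lesssim(\nu\log(1/\nu))^{-1/3}$. The price is that $|R_n|/\nu$ is no longer uniformly bounded on $\|Y\|_\infty<r_1$, so the multiplicative bound on $e^{-R_n/(2\nu)}$ is unavailable. Instead one uses the sharper, dimension-free form of your first inequality, $|R_n(Y)|\le C\,\|Y\|_\infty\|Y\|_2^2\le C r_1\|Y\|_2^2$, to absorb $R_n$ into the quadratic form: $H_{n,x}=\tfrac12 Y^T(\cA^*\pm 2C r_1 I)Y+O(\cdot)$. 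The near-region integral is then a genuine Gaussian integral with Hessian $\cA^*\pm O(1/n)I$, and Lemma~\ref{lem:perturbed_hessian} (with $\epsilon\lesssim 1/n$) guarantees $\det(\cA^*\pm O(1/n)I)\asymp\det\cA^*\asymp\mu^n$. So Lemma~\ref{lem:perturbed_hessian} is not a device for ``pushing beyond'' the claimed range, as your last paragraph suggests; it is what makes the claimed range $n\le N_1(\nu)$ attainable in the first place. You should also note, as a smaller point, that your bound $|R_n|\le Cn\|Y\|_\infty^3$ is a coarsening of $\sum_k|y_k|^3\le\|Y\|_\infty\|Y\|_2^2$: it is the coarse form that pins you to the too-small cutoff $r_n\lesssim(\nu/n)^{1/3}$, whereas the $\ell^2$-based form is exactly what allows $r_1\sim 1/n$.
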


\begin{proof}
Throughout the proof, the notation $C_k$ denote a constant that is greater than $1$ and depends only on $F$.

First we note that $H_{n, x}(x_{-n}, \cdots, x_{-1})$ is $C^3$ if all $x_k \in U$. Using Taylor expansion, we have
\[
	\begin{aligned}
	H_{n, x}(X) & = 
	H_{n, x}(X^*) + \frac12(X - X^*)^T A_{n, x}(X^*) (X - X^*)  \\
	& \quad + \sum_{|\beta| = 3} \left( \sum_{k = -n}^{-1} D^3 F(\xi_k)(x_k - x_k^*)
	- D^3 \psi(\xi_k)(x_k - x_k^*) \right), 
	\end{aligned}
\]
where $\xi_k$ are intermediate points, and $D^3 F$, $D^3 \psi$ are trilinear forms.  When $\|X - X^*\|_\infty < r$ we have 
\begin{equation}
	\label{eq:8}
	\sum_{i={-n}}^{-1}|x_i-x_i^*|^3\leq r\sum_{i=-n}^{-1}|x_i-x_i^*|^2
\end{equation}
and 
\begin{equation}
	\label{eq:9}
	|H_{n,x}(X) - H_{n,x}(X^*) - \frac{1}{2}(X-X^*)^T \cA_{n, x}^*(X-X^*)| \leq C_0r \|X-X^*\|_2^2
\end{equation}
where $C_0 = \frac16 \max_{y\in \T^d}(\|DF^3(y)\| + \|D^3\psi(y)\|)$.

By Proposition~\ref{prop:min_eigenvalue}, there exists $r_0>0$ and $C_1 > 1$ depending only on $F$, such that
\begin{equation}
  \label{eq:hessian_r0}
	H_{n,x}(X) > C_1^{-1} \|X - X^*\|_2^2, \quad \text{ if } \|X - X^*\|_\infty < r_0. 
\end{equation}
By Lemma~\ref{lem:H-inf-lower}, there exists $\delta > 0$ such that 
\begin{equation}
  \label{eq:Hnx-delta}
  	H_{n, x}(X) \ge \delta, \quad \text{ if } \|X - X^*\|_\infty \ge r_0. 
\end{equation}

Let $r_1(n) = \frac{1}{C_2 n}$, where $C_2$ is large enough so that Lemma~\ref{lem:perturbed_hessian} applies with $\cA = \cA_{n, x}^*$, $C = C_1$ and $\epsilon = C_1 r$ (note that the minimum eigenvalue of $\cA_{n, x}^*$ is at least $C_1^{-1}$ due to \eqref{eq:hessian_r0}). Set 
\[
	S_0 = \left\{  X \in (\R^d)^n \st \|X - X^*\|_\infty \le  r_0 \right\}, 
\]
\[
	S_1= S_1(n) = \left\{  X \in (\R^d)^n \st \|X - X^*\|_\infty \le  r_1(n) \right\}. 
\]
Denote $\tlX=X-X^*$, we have the following estimates,
\begin{equation}
	\label{eq:int-S-upper}	\begin{split}
	&\int_{S_1}\exp\left[ -\frac{1}{2\nu}H_{n,x}(X)\right]dx_{-n}\cdots dx_{-1}\\
	& \leq \int_{||\tlX||_{\infty}<r_1}\exp\left[-\frac{1}{2\nu}\left(H_{n,x}(X^*)+\frac{1}{2}(\tlX)^T(\cA_{n, x}^*- C_0 r_1 I_{nd})\tlX\right)\right] d\tilde{X}\\
	&\leq (4\pi\nu)^{nd/2}\det(\cA_{n, x}^*- C_0 r_1 I_{nd})^{-1/2}\\
	&\leq (4\pi\nu)^{nd/2}\det(\cA_{n, x}^*)^{-1/2}\left(\frac{\det(\cA_{n, x}^*- C_0 r_1 I_{nd})}{\det(\cA_{n, x}^*)}\right)^{-1/2}\\
	&\leq C_3 (4\pi\nu)^{nd/2}\det(\cA_{n, x}^*)^{-1/2} \leq C_4  (4\pi\nu)^{nd/2} \mu^n
	\end{split}
\end{equation}
for some $C_3, C_4 > 1$. In the last line we applied Lemma \ref{lem:perturbed_hessian} and Proposition~\ref{prop:Hessian-bound}. 

We now estimate the same integral from below. Indeed, we have
\begin{equation}
  \label{eq:S1-lower}
  \begin{aligned}
 & \int_{S_1} \exp \left( - \frac{1}{2\nu} H_{n, x}(X) \right) dX \\
& \ge \int_{\|\tilde{X}\|_\infty \le r_1(n)}
\exp\left( - \frac{1}{4\nu} \tilde{X}^T (\cA_{n, x}^* - C_0 r_1 I_{nd}) \tilde{X} \right) d \tilde{X} \\
& = (2\nu)^{nd/2} \int_{\|V\|_\infty \le r_1(n)/\sqrt{2\nu}}
	\exp\left(  - \frac12 V^T (\cA_{n, x}^* - C_0 r_1 I_{nd})V \right) dV \\
& \ge (4\pi \nu)^{nd/2} \det(\cA_{n, x}^* - C_0 r_1 I_{nd})^{-\frac12}
\cdot 
(2\pi)^{-nd/2} \int_{\|V\|_\infty \le \frac{\|r_1(n)\|}{C_1 \sqrt{2\nu}}} \exp(- \frac12 V^T V) dV \\
& \ge 
 (4\pi \nu)^{nd/2} \det(\cA_{n, x}^* - C_0 r_1 I_{nd})^{-\frac12}
 \left( 1 - \frac{C_1 \sqrt{2\nu}}{r_1(n)} e^{-\frac12 \frac{r_1^2(n)}{2C_1^2 \nu}}\right)^{nd} \\
& \ge 
C_5^{-1} (4\pi \nu)^{nd/2} \mu^n
\left( 1 - C_1^2 n\sqrt{2\nu} e^{-\frac12 (C_1^2 n \sqrt{2\nu})^2} \right). 
\end{aligned}
\end{equation}
In the last formula, we applied the Gaussian tail bound $\sqrt{1}{2\pi} \int_{|x| > r} e^{-\frac12 x^2} < \frac{1}{r\sqrt{2\pi}} e^{-\frac12 r^2}$. 

Suppose $n < \nu^{-\frac13}$, we will choose $\nu_0$ small enough depending only on $F$ such that 
\[
C_1^2 n\sqrt{2\nu} e^{-\frac12 (C_1^2 n \sqrt{2\nu})^2} < \frac{1}{2n}. 
\]
Indeed, 
\[
2C_1^2 n^2 \sqrt{2\nu} e^{-\frac12(C_1^2 n \sqrt{2\nu})^2}
< 2\sqrt{2} C_1^2 \nu^{-\frac16} e^{-\frac12(C_1^2 \sqrt{2} \nu^{-\frac16})^2} < 1
\]
if $\nu_0$ is small enough. It follows that 
\begin{equation}
	\label{eq:int-S-lower}
	\begin{split}
	\int_{S_1}\exp\left[ -\frac{1}{2\nu}H_{n,x}(X)\right]dx_{-n}\cdots dx_{-1}& \geq 
	C_5^{-1} (4\pi \nu)^{nd/2} \mu^n (1 - \frac{1}{2n})^{nd}
	\end{split}
\end{equation}
Summarizing, there exist $C_6 > 1$ such that 
\begin{equation}
	\label{eq:18}
	C_6^{-1} \mu^n
	\leq  \frac{1}{(4\pi\nu)^{nd/2}}  \int_{S_1} \exp\left[-\frac{1}{2\nu} H_{n,x}(X) \right] dx_{-n}\cdots dx_{-1}
	\leq C_6 \mu^n. 
\end{equation}

Moreover, the integral over $S_0^c$ can be estimated as follows:
\begin{equation}
	\label{eq:S0-complement}
	\begin{split}
	& \int_{S^c_0}\exp\left[-\frac{1}{2\nu}H_{n,x}(X)\right]dx_{-n}\cdots dx_{-1} \\
	& \leq \exp\left[(1-\frac{1}{2\nu}) \delta\right]\int_{\R^n}e^{-H_{n,x}(X)}dx_{-n}\cdots dx_{-1}
	\leq \exp\left[(1-\frac{1}{2\nu}) \delta\right]C^{nd}
	\end{split}
\end{equation}
where the last estimate is due to Lemma~\ref{lem:H-integrable}. 

We prove our proposition by splitting into two cases. 

\textbf{Case 1}: $r_1(n) \ge r$. In this case $S_0^c \supset S_1^c$, we bound the integral on $\R^{nd}$ by the integral on $S_0^c$ and $S_1$ to get 
\[
	\begin{aligned}
	C_6^{-1} \mu^n
	& \leq  \frac{1}{(4\pi\nu)^{nd/2}}  \int \exp\left[-\frac{1}{2\nu} H_{n,x}(X) \right] dx_{-n}\cdots dx_{-1} \\
	& \leq  C_6 \mu^n + (4\pi \nu)^{-nd/2} \exp\left[(1-\frac{1}{2\nu}) \delta\right]C^{nd}. 
	\end{aligned}
\]
Our conclusion holds if we can show
\[
	\frac{1}{(4\pi\nu)^{nd/2}}e^{(1-\frac{1}{2\nu}) \delta}C^{nd}\leq C_6 \mu^n. 
\]
We assume that $\nu_0$ is small enough such that $\nu_0^{nd/2} (4\pi)^{-nd/2} e^\delta  C^{nd} \mu^{-n} C_6^{-1} \le 1$, then it suffice to prove 
\[
	\nu^{-nd} e^{-\delta/(2\nu)} \le 1, \quad \text{ or } \quad
	nd \log \nu^{-1} \le \frac{\delta}{2\nu}, 
\]
which holds if $n \le \frac{\delta}{2d} (\nu \log \frac{1}{\nu})^{-1}$. 

\textbf{Case 2}: $r_1(n) < r$. In this case, we split the integral into three domains:
\[
\int_{S_1} + \int_{S_0 \setminus S_1} + \int_{S_0^c}. 
\]
First we assume the same conditions on $\nu_0$ and $n$ so that 
\[
	(4\pi \nu)^{-nd/4}\int_{S_0^c}\exp\left[-\frac{1}{2\nu}H_{n,x}(X)\right]dx_{-n}\cdots dx_{-1} < C_7 \lambda^n 
\]
for some $C_7 > 1$. For the integral on $S_0 \setminus S_1$, we use a similar computation to \eqref{eq:S0-complement} to get 
\[
	\int_{S_0 \setminus S_1}\exp\left[-\frac{1}{2\nu}H_{n,x}(X)\right]dx_{-n}\cdots dx_{-1}
	\leq \exp\left[(1-\frac{1}{2\nu}) C_0^{-1} r_1^2\right]C^{nd}. 
\]
 We will show that under our conditions
\[
	(4\pi \nu)^{-nd/2} \exp\left[(1-\frac{1}{2\nu}) C_0^{-1} r_1^2\right]C^{nd} \le C_6 \lambda^n. 
\]
Indeed, plug in $r_1(n) = 1/(C_2 n)$, and similarly as in Case 2, we set $\nu_0$ small enough so that all the $\nu$-independent exponential terms are dominated by $\nu^{-nd/2}$, then it suffices to prove 
\[
	\nu^{-nd} e^{-C_0^{-3} n^{-2} (2\nu)^{-1}} \le 1, \quad 
	\text{ or } \quad
	(nd)\log \frac{1}{\nu} \le C_0^{-3} n^{-2} \nu^{-1}, 
\]
which holds if $n \le (C_0^3d)^{-\frac13} (\nu \log \frac{1}{\nu})^{-\frac13}$. 

Finally, we note that all our estimates hold if $n \le C^{-1} (\nu \log \frac{1}{\nu})^{-\frac13}$ with $C = \max\{(C_0^3d)^{\frac13}, \frac{\delta}{2d}\}$, and $\nu_0$ sufficiently small depending only on $F$. 
\end{proof}

\begin{proof}[Proof of Proposition~\ref{prop:laplace}]
In this proof, all constants $C_k$ depend only on $F$. 

It suffices to prove the estimate for $x \notin U$. Let $C$ be the constant in Proposition~\ref{prop:U-laplace}. For $n \le N_1(\nu) = C^{-1}(\nu \log\frac{1}{\nu})^{-\frac13}$, set
\[
	M_\nu^n = \mu^{-n} \nu^{\frac{d}{2}} \sup_{x \notin U} \tL_\nu^n \bOne(x). 
\]
First of all, if $x \notin U$ and $\nu < 1$, 
\[
	\tL_\nu \bOne(x) = (4\pi \nu)^{-\frac{d}{2}} \int e^{-\frac{1}{2\nu} \tlh(y, x)} dy 
	\le (4\pi \nu)^{-\frac{d}{2}} \int e^{-\frac{1}{2} \tlh(y, x)} dy \le C_1 \nu^{-\frac{d}{2}}
\]
for some $C_1 > 1$ by Lemma~\ref{lem:H-integrable}(for $n=1$). We conclude that 
\[
	M_\nu^1 \le C_1 \mu^{-1}. 
\]

On the other hand, (recall $\tlK_\nu(y, x) = (4\pi \nu)^{-\frac{d}{2}} e^{-\tlh(y, x)/(2\nu)}$), 
\[
	\tL_\nu^n \bOne(x) = \int \tlK_\nu(y, x) \tL_\nu^{n-1} \bOne(y) dy = \int_U + \int_{U^c}. 
\]
By Proposition~\ref{prop:U-laplace}, 
\[
	\mu^{-n} \int_U \tlK_\nu(y, x) \tL_\nu^{n-1} \bOne(y) dy \le C \mu^{-1} \int_U \tlK_\nu(y, x) dy 
	\le C \mu^{-1} \tL_\nu \bOne(x) \le C C_1 \mu^{-1} \nu^{- \frac{d}{2}}. 
\]
By Proposition~\ref{prop:U-non-deg}, 
\[
	\begin{aligned}
	& \mu^{-n} \int_{U^c} \tlK_\nu(y, x) \tL_\nu^{n-1} \bOne(y) dy
	\le \mu^{-n} (4\pi \nu)^{-\frac{d}{2}} \int_{U^c} e^{-\frac{\tlh(y, x)}{2\nu}} \tL_\nu^{n-1} \bOne(y) dy \\
	& \le \mu^{-1} (4\pi)^{-\frac{d}{2}}\nu^{-d} \int_{U^c} e^{- \delta(\frac{1}{2\nu} - 1)} e^{-\tlh(y, x)} M_\nu^{n-1} dy 
	\le C_2 \mu^{-1} \nu^{-d}e^{-\frac{\delta}{2\nu}} M_\nu^{n-1} 
	\end{aligned}
\]
for some $C_2 > 1$. 
Suppose $\nu_0$ is small enough such that $C_2 \mu^{-1} \nu^{-d}e^{-\frac{\delta}{2\nu}} < \frac12$, then 
\[
	M_\nu^n \le C C_1 \mu^{-1}  + \frac12 M_\nu^{n-1}, 
\]
which implies $M_\nu^n \le 2CC_1 \mu^{-1}$ for all $n \le N(\nu)$. 

For the lower bound, let $x \in \R^d$ and assume $\bary \in \argmin \tlh(\cdot, x)$. Using the semi-concavity of $\psi$, there is $C_3 > 0$ such that $\tlh(y, x) \le C_3|y - \bary|^2$ for all $y \in \R^d$. Then
\[
	\begin{aligned}
	\mu^{-n} \tL_\nu^n \bOne(x) 
	& \ge \mu^{-n }\int_U \tlK_\nu^n(y, x) \tL_\nu^{n-1} \bOne(y) dy \ge C^{-1} \mu^{-1}\int_U \tlK_\nu^n(y, x) dy \\
	& \ge C^{-1} \mu^{-1} (4\pi \nu)^{-\frac{d}{2}} \int_{y \st |y - \bary(x)| \le \nu^{\frac12}} e^{- C_3|y - \bary(x)|^2/(2\nu)} dy \\
	& = C^{-1} \mu^{-1} (2\pi)^{-\frac{d}{2}} \int_{|v| \le 1} e^{-C_3|v|^2} dy = C^{-1} \mu^{-1} C_4
	\end{aligned}
\]
for some $C_4 > 1$. The Proposition follows.
\end{proof}

\bibliographystyle{abbrv}
\bibliography{exp-convergence}

\begin{thebibliography}{10}

\bibitem{Anantharaman2004}
N.~Anantharaman.
\newblock On the zero-temperature or vanishing viscosity limit for certain
  {Markov} processes arising from {Lagrangian} dynamics.
\newblock {\em Journal of the European Mathematical Society}, 6(2):207--276,
  2004.

\bibitem{Aubry1992}
S.~Aubry, R.~MacKay, and C.~Baesens.
\newblock Equivalence of uniform hyperbolicity for symplectic twist maps and
  phonon gap for {Frenkel-Kontorova} models.
\newblock {\em Physica D: Nonlinear Phenomena}, 56(2-3):123--134, 1992.

\bibitem{Bellman97}
R.~Bellman.
\newblock {\em Introduction to Matrix Analysis: Second Edition}.
\newblock Classics in Applied Mathematics. Society for Industrial and Applied
  Mathematics, 1997.

\bibitem{Bernard08}
P.~Bernard.
\newblock The dynamics of pseudographs in convex {H}amiltonian systems.
\newblock {\em J. Amer. Math. Soc.}, 21(3):615--669, 2008.

\bibitem{dBru1981}
N.~G. de~Bruijn.
\newblock {\em Asymptotic methods in analysis}.
\newblock Dover Publications, Inc., New York, third edition, 1981.

\bibitem{WKM+2000}
W.~E, K.~Khanin, A.~Mazel, and Y.~Sinai.
\newblock Invariant measures for {B}urgers equation with stochastic forcing.
\newblock {\em Ann. of Math. (2)}, 151(3):877--960, 2000.

\bibitem{Fathi2008}
A.~Fathi.
\newblock {\em Weak KAM theorem in Lagrangian dynamics, 10th preliminary
  version}.
\newblock book preprint, 2008.

\bibitem{GIK+2005}
D.~Gomes, R.~Iturriaga, K.~Khanin, and P.~Padilla.
\newblock Viscosity limit of stationary distributions for the random forced
  {Burgers} equation.
\newblock {\em Mosc. Math. J}, 5(3):613--631, 2005.

\bibitem{HM2011}
M.~Hairer and J.~C. Mattingly.
\newblock Yet another look at {Harris}’ ergodic theorem for {Markov} chains.
\newblock In {\em Seminar on Stochastic Analysis, Random Fields and
  Applications VI}, volume~63, pages 109--117. Springer, 2011.

\bibitem{IK2003}
R.~Iturriaga and K.~Khanin.
\newblock {Burgers} turbulence and random {L}agrangian systems.
\newblock {\em Comm. Math. Phys.}, 232(3):377--428, 2003.

\bibitem{IKZ2019}
R.~Iturriaga, K.~Khanin, and K.~Zhang.
\newblock Exponential convergence of solutions for random {Hamilton-Jacobi}
  equations.
\newblock {\em Stochastics and Partial Differential Equations: Analysis and
  Computations}, pages 1--36, 2019.
\newblock Published online.

\bibitem{IS2009}
R.~Iturriaga and H.~S{\'a}nchez-Morgado.
\newblock Hyperbolicity and exponential convergence of the {Lax}--{Oleinik}
  semigroup.
\newblock {\em Journal of Differential Equations}, 246(5):1744--1753, 2009.

\bibitem{KZ2017}
K.~Khanin and K.~Zhang.
\newblock Hyperbolicity of minimizers and regularity of viscosity solutions for
  a random {Hamilton--Jacobi} equation.
\newblock {\em Communications in Mathematical Physics}, 355(2):803--837, 2017.

\bibitem{Sinai1991}
Y.~G. Sinai.
\newblock Two results concerning asymptotic behavior of solutions of the
  {Burgers} equation with force.
\newblock {\em Journal of statistical physics}, 64(1-2):1--12, 1991.

\end{thebibliography}

\end{document}